\newcommand{\AGor}{{\check A}} 
\newcommand{\BG}{\B G}
\newcommand{\B}{B}
\newcommand{\Ch}{\mathrm{Ch}}
\newcommand{\cD}{\mathcal{D}}  
\newcommand{\cT}{\mathcal{T}}
\newcommand{\Dt}{\widetilde{\mathbf{D}}}
\newcommand{\IGX}{I_GX}
\newcommand{\inCh}{\ovCh} 
\newcommand{\Irrep}{\operatorname{Irrep}} 
\newcommand{\I}{\mathbf{I}} 
\newcommand{\IzeroX}[1]{I_G^{0,V}X}
\newcommand{\KGor}{{\check K}} 
\newcommand{\Kh}{\widehat{K}}
\newcommand{\Ks}{\ck}
\newcommand{\N}{\mathbf{N}}              
\newcommand{\ord}{\operatorname{ord}}
\newcommand{\oPZ}{\mathbf{P}} 
\newcommand{\PP}{\mathbf{P}}
\newcommand{\cPZ}{\mathbf{P}}
\newcommand{\Rep}{\operatorname{Rep}} 
\newcommand{\R}{\mathscr{R}}             
\newcommand{\rl}{\mathcal{R}} 
\newcommand{\rlb}{\overline{\rl}}
\newcommand{\Sm}{\mathscr{S}}             
\newcommand{\Smv}{^V\kern-.5em\mathscr{S}}
\newcommand{\T}{\mathbb{T}}    
\newcommand{\TV}{^V\kern-.4em\mathscr{T}}
\newcommand{\age}{\operatorname{age}}
\newcommand{\augi}{{\mathfrak a}}
\newcommand{\aug}{\epsilon}
\newcommand{\bE}{\mathbf{E}}
\newcommand{\bone}{\mathbf{1}} 
\newcommand{\bra}[1]{ {\{ {#1} \}} }
\newcommand{\cF}{{\cf}}
\newcommand{\cP}{\mathcal{P}}
\newcommand{\cf}{{\mathscr F}}
\newcommand{\cg}{\mathscr{G}}
\newcommand{\ck}{\mathscr{K}}
\newcommand{\cl}{{\mathscr L}}
\newcommand{\co}{{\mathscr O}}
\newcommand{\crr}{{\mathscr R}}
\newcommand{\cs}{{\mathscr S}}
\newcommand{\cv}{{\mathscr V}}
\newcommand{\cw}{{\mathscr W}}
\newcommand{\cx}{\mathscr{X}}
\newcommand{\dsand}{\quad \text{ and } \quad}
\newcommand{\euler}{\operatorname{eu}}
\newcommand{\ga}{\gamma}
\newcommand{\ix}{\stack{X}}
\newcommand{\kone}{1}
\newcommand{\lam}{\lambda}
\newcommand{\nc}{\mathbb{C}} 
\newcommand{\nk}{\mathbb{K}}
\newcommand{\nq}{\mathbb{Q}}
\newcommand{\nz}{\mathbb{Z}}
\newcommand{\oAZ}{\mathbf{A}} 
\newcommand{\oCh}{\mathscr{C}\kern-.25em{h}} 
\newcommand{\ovCh}{\widetilde{\oCh}}
\newcommand{\ovaug}{\widetilde{\epsilon}}
\newcommand{\oc}{\widetilde{c}}   
\newcommand{\oexp}{\widetilde{\operatorname{exp}}} 
\newcommand{\oga}{\widetilde{\ga}} 
\newcommand{\ocv}{\widetilde{c}}   
\newcommand{\ovc}{\ocv}   
\newcommand{\ovexp}{\widetilde{\operatorname{exp}}} 
\newcommand{\ovga}{\widetilde{\ga}}
\newcommand{\oKZ}{\mathbf{K}} 
\newcommand{\olam}{\widetilde{\lam}}
\newcommand{\one}{\mathbf{1}} 
\newcommand{\opsi}{\widetilde{\psi}} 
\newcommand{\pd}{\star}
\newcommand{\pro}{\mathbb{P}}
\newcommand{\psit}{\widetilde{\psi}}
\newcommand{\Res}{\operatorname{Res}} 
\newcommand{\res}[2]{\left.{#1}\right|_{#2}}
\newcommand{\rk}{\mathrm{rk}}
\newcommand{\sig}{\sigma}
\newcommand{\stack}[1]{\mathscr {#1}}
\newcommand{\taub}{\overline{\tau}}
\newcommand{\ti}{\star}
\newcommand{\A}{\mathbb A}
\newcommand{\Pro}{\mathbb P}
\newcommand{\Q}{\mathbb Q}
\newcommand{\Z}{\mathbb Z}
\newcommand{\Iner}{\mathfrak{I}} 
\newcommand{\IIX}{{\Iner \kern-.75em \Iner}_{\ix}}
\newcommand{\II}{\Itwo}
\DeclareMathOperator{\Pic}{Pic}
\DeclareMathOperator{\GL}{GL}
\DeclareMathOperator{\Td}{Td}
\DeclareMathOperator{\Ind}{Ind}
\DeclareMathOperator{\sgn}{sgn}
\newcommand{\Lie}{\operatorname{Lie}}
\newcommand{\Imult}[1]{{I}^{#1}}  
\newcommand{\Itwo}{\Imult{2}}
\newtheorem{quest}{Question} 
\newtheorem{result}{Main Results} 
\newtheorem{conseq}{Corollary}  
\newtheorem{thm}{Theorem}[section] 
\newtheorem{lm}[thm]{Lemma}
\newtheorem{prop}[thm]{Proposition} 
\newtheorem{crl}[thm]{Corollary}
\theoremstyle{definition}
\newtheorem{rem}[thm]{Remark} 
\newtheorem{df}[thm]{Definition} 
\newtheorem{ex}[thm]{Example}
\newtheorem{df-pr}[thm]{Definition-Proposition}
\theoremstyle{remark} 
\newtheorem{nota}[thm]{Notation} \renewcommand{\thenota}{\kern-1ex}
\begin{document}
\title[Inertial Chern Classes and Compatible Power Operations] {Chern Classes
  and Compatible Power Operations in Inertial K-theory}

\subjclass[2010]{14N35, 53D45, 19L10, 55N15, 14H10}

\author [D. Edidin]{Dan Edidin} \address {Department of
Mathematics, University of Missouri, Columbia, MO 65211, USA}
\email{edidind@missouri.edu} 

\author [T. J. Jarvis]{Tyler J. Jarvis} \address {Department of
Mathematics, Brigham Young University, Provo, UT 84602, USA}
\email{jarvis@math.byu.edu} 

\author [T. Kimura]{Takashi Kimura} \address {Department of
Mathematics and Statistics; 111 Cummington Mall, Boston University; 
Boston, MA 02215, USA } \email{kimura@math.bu.edu} 
\thanks{Research of the first author was partially supported by a Simons Collaboration Grant. Research of the second author partially supported by NSA grant H98230-10-1-0181.
Research of the third author partially supported by NSA grant H98230-10-1-0179.}  

\date{\today}

\begin{abstract}
Let $\ix = [X/G]$ be a smooth Deligne-Mumford quotient stack. In a previous paper the authors constructed a class of exotic products called {\em 
inertial products}
on $K(I\ix)$, the Grothendieck group of vector bundles on the inertia stack
$I\ix$. In this paper we develop a theory of Chern classes and compatible power operations for inertial products. When $G$ is diagonalizable these give rise to an augmented $\lambda$-ring structure on inertial K-theory.

One well-known inertial product is the {\em virtual product}. Our
results show that for toric Deligne-Mumford stacks there is a
$\lambda$-ring structure on inertial K-theory.  As an example, we
compute the $\lambda$-ring structure on the virtual K-theory of the
weighted projective lines $\Pro(1,2)$ and $\pro(1,3)$.  We prove that
after tensoring with $\nc$, the augmentation completion of this
$\lam$-ring is isomorphic as a $\lambda$-ring to the classical
K-theory of the crepant resolutions of singularities of the coarse moduli
spaces of the cotangent bundles ${\mathbb T}^*\Pro(1,2)$ and ${\mathbb
  T}^*\Pro(1,3)$, respectively.  We interpret this as a manifestation
of mirror symmetry in the spirit of the Hyper-K\"ahler Resolution
Conjecture.
\end{abstract}
\maketitle \setcounter{tocdepth}{1}

\section{Introduction}

The work of Chen and Ruan \cite{CheRu:02}, Fantechi-G\"ottsche \cite{FaGo:03}, and Abramovich-Graber-Vistoli \cite{AGV:02, AGV:08} defined
orbifold products for the cohomology, Chow groups and K-theory
of the inertia stack $I\ix$ of a smooth
Deligne-Mumford stack $\ix$.  Moreover, there is an orbifold Chern
character $\oCh\colon K(I\ix) \to A^*(I\ix)_\nq$ which respects these
products \cite{JKK:07}.  In \cite{EJK:12a} we showed that the orbifold
product and Chern character fit into a more general formalism of {\em inertial products}
which are discussed later in the introduction.

In this paper, we are motivated by mirror symmetry to find examples of 
elements in orbifold and inertial algebraic K-theory 
that play a role analogous to 
classes of vector bundles in the ordinary algebraic K-theory.
Each such element should possess \emph{orbifold Euler
  classes} 
analogous to the classically defined classes $\lambda_{-1}({\mathcal E^*})$
and $c_{r}({\mathcal E})$ for vector bundles of rank $r$.
This leads us to
introduce the notions of an \emph{orbifold $\lam$-ring} and associated
\emph{Adams (or power) operations} which are suitably compatible with
\emph{orbifold Chern classes},  as we now explain.

Let $K(\cx)$ be the Grothendieck group of locally free sheaves on $\cx$ 
with multiplication given by the ordinary tensor product.
By
definition, $K(\cx)$ is generated by classes of vector bundles and 
each such class
possesses an Euler class.
In the context of mirror symmetry we may
be given a ring $K$ which is conjectured to be the ordinary K-theory of 
of some unknown variety.
From the ring structure alone there is no way to solve 
the problem of identifying the elements of $K$ which
correspond to Chern classes of vector bundles on this unknown variety.
However, a partial
solution arises from observing that ordinary K-theory
has the additional structure of a \emph{$\lam$-ring}. 
Every
$\lam$-ring has an associated invariant---the semigroup of
\emph{$\lam$-positive elements} (Definition \ref{df:LamPositive}) which share many of
the properties of classes of vector bundles in ordinary K-theory. In
particular, $\lam$-positive elements have Euler classes defined in terms 
of the $\lam$-ring structure.
In the case of ordinary K-theory of a scheme or stack,
classes of vector bundles are always $\lam$-positive, but there are other
$\lam$-positive classes as well.

Endowing the orbifold
K-theory ring with the structure of a $\lam$-ring with respect to its
orbifold product allows one to identify its semigroup of
$\lam$-positive elements. Furthermore, defining suitably
compatible orbifold Chern classes, should give these $\lam$-positive elements
orbifold Euler classes in orbifold K-theory, orbifold
Chow theory, and orbifold cohomology theory. These $\lam$-positive
elements can be regarded as building blocks of orbifold K-theory.

We prove the following results about smooth quotient
stacks $\ix = [X/G]$ where $G$ is a linear algebraic group acting with
finite stabilizer on a smooth variety
$X$.

\begin{result} \label{thm.orbifoldlam}

\ 
\begin{enumerate}
\item[(a)] If 
$\ix$ is Gorenstein, then there is
an orbifold Chern class homomorphism $c_t \colon K(I{\stack  X}) \to 
A^*(I{\stack  X})_\nq[[t]]$ (see Definition~\ref{df.ochernclass} and Theorem~\ref{thm:oChClassProps}).

\item[(b)] If 
$\ix$ is {\em strongly Gorenstein}
(see Definition~\ref{df:Gorenstein}), then there are Adams $\psi$-operations
and $\lambda$-operations defined on $K(I\ix)$
(resp. $K(I\ix)_\nq$) 
compatible with the Chern class homomorphism (see Definitions~\ref{df.opsi} and \ref{df.olam} as well as Theorem~\ref{thm:oChClassProps}).

\item[(c)] If $G$ 
is diagonalizable and $\ix$ is strongly Gorenstein,
then the Adams and $\lambda$-operations 
make $K(I\ix)_{\Q}:= K(I\ix) \otimes \Q$ with
its orbifold product into a 
rationally augmented 
$\lambda$-ring (see Theorem~\ref{thm:OrbifoldPsiRing}).

\item[(d)] If the orbifold $\ix$ is strongly Gorenstein, then there is an \emph{inertial dual} operation $\cf\to \cf^\dagger$ on $K(\ix)$ which is an involution and a ring homomorphism and which commutes with the orbifold Adams operations and the orbifold augmentation (see Theorem~\ref{prop.inertialdual}).

\end{enumerate}
\end{result}

Our method of proof is based on developing properties of {\em inertial
  pairs} defined in \cite{EJK:12a} . An inertial pair $(\R, {\cs})$
consists of a vector bundle $\R$ on the double inertia stack $\II\ix$
together with a class ${\cs} \in K(\ix)_\nq$, where $\R$ and $\cs$ satisfy certain
compatibility conditions.  The bundle ${\R}$ determines associative
{\em inertial products} on $K(I\ix)$ and $A^*(I\ix)$, and the class
${\cs}$ determines 
a Chern character homomorphism of inertial rings $\oCh \colon K(I\ix) \to
A^*(I\ix)_{\nq}$.

The basic example of an inertial pair $({\R}, {\cs})$ is the orbifold
obstruction bundle ${\R}$ and the class ${\cs}$ defined in
\cite{JKK:07}.  This pair corresponds to the usual orbifold product.
However, this is far from
being the only example.  Each 
vector bundle $V$ on $\ix$ determines
two inertial pairs, $({\R}^+V, {\cs}^+V)$ and $({\R}^-V,
{\cs}^-V)$. 
For example, if we denote the tangent bundle of $\ix$ by ${\mathbb T}$, then the inertial pair $({\R}^-{\mathbb T},
{\cs}^-{\mathbb T})$ produces the virtual orbifold product of
\cite{GLSUX:07}.

We prove that the main results listed above hold for many
inertial pairs.  
As a corollary, we obtain the following: 
\begin{conseq}\hfill
\begin{enumerate}
\item[(a)] The virtual orbifold product on $K(I\ix)$ admits a Chern 
series homomorphism $\ocv_t:K(I\ix)\to A^*(I\ix)_\nq[[t]]$ 
as well as compatible Adams $\psi$-operations and $\lambda$-operations on 
$K(I\ix)_\nq$.

\item[(b)] If $\ix = [X/G]$ with $G$ diagonalizable, 
then the virtual orbifold $\lambda$-operations 
make $K(I\ix)_{\nq}$ with
its orbifold product into a 
rationally augmented 
$\lambda$-ring with a compatible inertial dual.
\end{enumerate}
\end{conseq}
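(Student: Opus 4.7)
The plan is to reduce the Corollary to the Main Results theorem by identifying the virtual orbifold product as an instance of the general inertial-pair construction. Per the discussion preceding the Corollary, the virtual orbifold product on $K(I\ix)$ and $A^*(I\ix)$ is induced by the inertial pair $(\R^{-\T}, \cs^{-\T})$, where $\T$ denotes the tangent bundle of $\ix$. The crucial input, proved in \cite{EJK:12a}, is that this particular inertial pair is always strongly Gorenstein (and in particular Gorenstein), which is precisely the hypothesis needed to engage the full strength of the Main Results.

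With strong Gorenstein-ness in hand, part (a) is immediate: Main Results (a) produces a Chern series homomorphism, which in the virtual setting we denote $\ocv_t\colon K(I\ix) \to A^*(I\ix)_\nq[[t]]$, and Main Results (b) supplies compatible Adams $\psi$ and $\lambda$ operations on $K(I\ix)_\nq$. For part (b), we additionally invoke the hypothesis $\ix = [X/G]$ with $G$ diagonalizable, which allows us to apply Main Results (c) to upgrade the $\lambda$ operations to a rationally augmented $\lambda$-ring structure on $K(I\ix)_\nq$ equipped with the virtual orbifold product; Main Results (d), applied to the same inertial pair, then produces the compatible inertial dual.

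The principal obstacle, which has already been dispatched in \cite{EJK:12a}, is the verification that $(\R^{-\T}, \cs^{-\T})$ is strongly Gorenstein; once this is granted, the Corollary is essentially a bookkeeping exercise of specializing Main Results to this specific inertial pair. The conceptual content of the Corollary lies not in the proof itself but in the observation that the virtual orbifold product of \cite{GLSUX:07}, originally constructed from a rather different viewpoint, fits naturally into the strongly Gorenstein inertial framework developed here, and hence automatically inherits a theory of Chern classes together with compatible power operations and an inertial dual.
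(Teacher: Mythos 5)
Your proposal matches the paper's own argument: the Corollary follows immediately from the observation that the virtual orbifold product is induced by the inertial pair $(\R^-\T, \cs^-\T)$, which \cite{EJK:12a} shows is always strongly Gorenstein, and then specializing the general theorems (Theorem~\ref{thm:oChClassProps}, Theorem~\ref{thm:OrbifoldPsiRing}, Corollary~\ref{crl.OrbLambdaRing}, and Theorem~\ref{prop.inertialdual}) for (strongly) Gorenstein inertial pairs. The one cosmetic imprecision is that you cite ``Main Results (a)--(d),'' which as stated concern the orbifold inertial pair $(\Lr{\T},\cs\T)$ specifically; what you actually invoke are the general theorems for arbitrary strongly Gorenstein pairs, as the paper itself notes when it says the main results ``actually hold for many inertial pairs.''
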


Whenever an inertial K-theory ring has a $\lam$-ring
structure compatible with its inertial Chern classes and inertial
Chern character, then its semigroup of $\lam$-positive elements will
have an \emph{inertial Euler class} in K-, Chow, and cohomology theory
(see Equation (\ref{eq:ovcvectorbundle})),
but where all products, rank,
Chern classes, and the Chern character are the inertial
ones. Furthermore, in many cases, the semigroup of $\lam$-positive
elements in inertial K-theory 
can be used to 
give a nice presentation of both the inertial K-theory ring and
inertial Chow ring.

A major motivation for the work in this paper is mirror symmetry.
Beginning with the work of Ruan, a series of conjectures have been
made that relate the orbifold quantum cohomology and Gromov-Witten
theory of a Gorenstein orbifold to the corresponding quantum
cohomology and Gromov-Witten theory of a crepant resolution of
singularities of the orbifold \cite{CoRu:13}.  When the orbifold also
has a holomorphic symplectic structure, these conjectures predict
that the orbifold cohomology ring 
should be isomorphic 
to the usual cohomology of a crepant resolution. In the
literature this conjecture is often referred to as
Ruan's Hyper-K\"ahler resolution conjecture (HKRC), because in many examples
the holomorphic symplectic structure is in fact Hyper-K\"ahler.

In view of Ruan's HKRC conjecture, it is natural to investigate whether there is an orbifold $\lambda$-ring structure on orbifold K-theory that is isomorphic to the usual $\lambda$-ring structure on $K(Z)$.
One place to look 
is on the cotangent bundles
of complex manifolds and orbifolds. These naturally carry a holomorphic symplectic
structure, and in many cases these are hyper-K\"ahler.
In \cite{EJK:12a} we prove that if $\ix = [X/G]$, then the virtual orbifold
Chow ring of $I\ix$ (as defined in \cite{GLSUX:07}) is isomorphic
to the orbifold Chow ring of $T^*I\ix$. Since
the inertial pair defining the virtual orbifold product is strongly
Gorenstein, we expect that the $\lambda$-ring structure 
on $K(I\ix)$ should  be
related to the usual
$\lambda$-ring structure on $K(Z)$.

When $\ix$ is an orbifold, $K(I\ix)$ typically has larger rank as
an Abelian group than the corresponding Chow group $A^*(I\ix)$, while
$K(Z)$ and $A^*(Z)$ have the same rank by the Riemann-Roch theorem for
varieties. Thus, it is not reasonable to expect an isomorphism of
$\lambda$-rings between $K(I\ix)$ with the virtual product and
$K(Z)$ with the tensor product.

But the Riemann-Roch theorem for Deligne-Mumford stacks implies
that a summand $\Kh(I\ix)_\nq$, corresponding to the completion at the classical
augmentation ideal in $K(I\ix)_{\nq}$, is isomorphic as an Abelian
group to $A^*(I\ix)_{\nq}$.  We prove the remarkable
result (Theorem \ref{thm.completions}) that if
$(\R,\cs)$ is any inertial pair, then the classical augmentation ideal
in $K(I\ix)_{\nq}$ and inertial augmentation ideal generate the same topology
on the Abelian group $K(I\ix)$. It follows that the summand $\Kh(I\ix)$
inherits any inertial $\lambda$-ring structure from $K(I\ix)$.

This allows us to formulate 
a  
$\lambda$-ring variant of the HKRC for
orbifolds $\ix = [X/G]$ with $G$ diagonalizable. Precisely, we expect
there to be an isomorphism of $\lambda$-rings
(after tensoring with $\nc$)
 between $\Kh(I\ix)$ with
its virtual orbifold product and $K(Z)$, where $Z$ is a hyper-K\"ahler
resolution of the cotangent bundle $\T^*\ix$.

We conclude 
by proving this conjecture
for the weighted projective line $\pro(1,n)$ for $n=2,3$.
  We also obtain an isomorphism of Chow rings
$(A^*(I\pro(1,n))_{\nc},\star_{virt}) \cong A^*(Z)_{\nc}$ 
commuting
with the corresponding Chern characters.
Furthermore, we show that the semigroup of inertial $\lam$-positive elements 
induces an exotic integral lattice structure on $(K(I\pro(1,n))_\nc,\star_{virt})$ 
and 
$(A^*(I\pro(1,n))_\nc,\star_{virt})$ 
which corresponds to the ordinary integral
lattice in $K(Z)_\nc$ and $A^*(Z)_\nc$, respectively.

Finally, our analysis suggests the following interesting question.
\begin{quest} 
Is there
category associated to the crepant resolution $Z$ whose
Grothendieck group (with $\nc$-coefficients) is isomorphic as a $\lam$-ring 
to the virtual orbifold K-theory 
$(K(I\cx)_\nc,\star_{virt})$ before completion at the augmentation ideal?
\end{quest}
\begin{rem}
It has subsequently been shown \cite{KS:12}  that the results (namely Propositions \ref{prop:POneTwoVirtualK}, \ref{prop:POneThreeVirtualK}, and 
Theorem \ref{prop:HKRC}) 
 in this paper for the virtual K-theory of $\pro(1,n)$ for $n=2,3$ generalize to all $n$. 
 This verifies the conjectured relationship between the virtual
 K-theory ring and the K-theory of the crepant resolution $Z_n$ of
 $T^*\pro(1,n)$ for all $n$.
\end{rem}
\subsection{Outline of the paper}  
We begin by briefly reviewing the results of \cite{EJK:10, EJK:12a} on inertial pairs, inertial products, and inertial Chern characters.

We then briefly recall the classical  $\lam$-ring and $\psi$-ring structures in ordinary equivariant K-theory, including the Adams (power) operations, Bott classes, Grothendieck's $\gamma$-classes,
and some relations among these and the Chern classes.

For Gorenstein inertial pairs we define a theory of Chern
classes and, for strongly Gorenstein inertial pairs, power (Adams) operations
on inertial K-theory. Since the inertial pair associated to the
virtual product of \cite{GLSUX:07} is always strongly Gorenstein, this produces Chern
classes and power operations in that theory.

We show that for strongly Gorenstein inertial pairs, the inertial
Chern classes satisfy a relation like that for usual Chern classes,
expressing the Chern classes in terms of the orbifold
$\psi$-operations and $\lam$-operations. Finally we prove that if $G$
is diagonalizable, the orbifold Adams operations are homomorphisms
relative to the inertial product. This shows that the virtual K-theory
of a toric Deligne-Mumford stack has $\psi$-ring and $\lambda$-ring
structures.  We also give an example to show that the
diagonalizability condition is necessary for obtaining a
$\lambda$-ring structure.

We then develop the theory of $\lambda$-positive elements for a $\lam$-ring
and show that $\lam$-positive elements of degree $d$ share many of the same
properties as classes of rank-$d$ vector bundles; for example, they have a
top Chern class in Chow theory and an Euler class in K-theory.  
We also introduce the notion of an inertial dual which 
is needed to define
the Euler class in inertial K-theory.  

We conclude by working through some examples, including that of 
$\B\mu_2$, 
and the virtual K-theory of the weighted projective lines $\pro(1,2)$ and $\pro(1,3)$.  

The $\lambda$-positive elements, and especially the $\lambda$-line
elements in the virtual theory, allow us to give a simple presentation
of the K-theory ring with the virtual product and a simple description
of the virtual first Chern classes. This allows us to prove that the
completion of this ring with respect to the augmentation ideal is
isomorphic as a $\lam$-ring to the usual K-theory of the resolution of
singularities of the cotangent orbifolds $T^*\pro(1,2)$ and
$T^*\pro(1,3)$, respectively.

\subsection*{Acknowledgments}
We wish to thank the \emph{Algebraic Stacks: Progress and Prospects}
Workshop at BIRS for their support where part of this work was
done. TJ wishes to thank Dale Husem\"oller for helpful
conversations and both the Max Planck Institut f\"ur Mathematik in
Bonn and the Institut Henri Poincar\'e for their generous support of
this research.  TK wishes to thank Yunfeng Jiang and
Jonathan Wise for helpful conversations and the Institut Henri
Poincar\'e, where part of this work was done, for their generous
support.  We also would like to thank Ross Sweet for help with
proofreading the manuscript.
Finally, we thank the referee for many helpful suggestions that have substantially improved the paper.

\section{Background material}
To make this paper self contained, we recall some background material from the papers \cite{EJK:10,EJK:12a}, but first we establish some notation and conventions. 

\subsection{Notation}

We work entirely in the complex algebraic category. We will work
exclusively 
with a smooth Deligne-Mumford stack $\ix$ with \emph{finite
stabilizer}, by which we mean the inertia
map $I\ix \to \ix$ is finite (see Definition~\ref{def:inertia} for the formal definition and more detail). We will also assume that every stack
$\ix$ has the \emph{resolution property}. This means that every
coherent sheaf is the quotient of a locally free sheaf. This
assumption has two consequences. The first is that the natural map
$K(\ix) \to G(\ix)$ is an isomorphism, where $K(\ix)$ is the
Grothendieck ring of vector bundles, and $G(\ix)$ is the Grothendieck
group of coherent sheaves. The second consequence  is
that $\ix$ is a \emph{quotient stack} \cite{Tot:04}. This means that $\ix = [X/G]$, 
where $G$ is a linear algebraic group acting on an affine scheme $X$.

If $\ix$ is a smooth Deligne-Mumford stack, we will 
explicitly  
choose a presentation $\ix = [X/G]$. This allows us to identify the
Grothendieck ring $K(\ix)$ with the equivariant Grothendieck ring
$K_G(X)$, and the Chow ring $A^*(\ix)$ with the equivariant
Chow ring $A^*_G(X)$. We will use the notation $K(\ix)$ and 
$K_G(X)$ (respectively $A^*(\ix)$ and  $A^*_G(X)$)  interchangeably.

\begin{df}\label{def:inertia}
  Let $G$ be an algebraic group acting on a scheme 
  $X$. We define the \emph{inertia 
scheme}  
$$\IGX:= \{(g,x)| gx
  = x\} \subseteq G\times X.$$  There is an induced action of $G$ on
  $\IGX $ given by $g \cdot (m,x) = (gmg^{-1}, gx)$. The quotient
  stack $I\ix = [\IGX /G] $ is the {\em inertia 
stack}
  of the quotient  ${\stack X} := [X/G]$.

  More generally, we define the higher inertia spaces to be the
  $k$-fold fiber products $$\Imult{k}_G X = \IGX  \times_X \ldots \times_X
  \IGX .$$
The quotient stack $\Imult{k}\ix := \left[\Imult{k}_G X /G\right]$ 
is the corresponding higher inertia stack.
\end{df}
The composition $\mu \colon G \times G \to G$ induces a composition
$\mu \colon     \Itwo_G X  \to \IGX$. 
This composition makes $\IGX $ into an $X$-group
with identity section $X \to \IGX $ given by $x \mapsto (1,x)$. 
Furthermore, for $i=1,2$, the projection map $e_i:\Itwo_G X\to \IGX$ is called the \emph{$i$th evaluation map}, since it corresponds to the evaluation morphism in Gromov-Witten theory.

\begin{df}
Let $\Psi \subset G$ be a conjugacy class.  We define $I(\Psi) = \{(g,x)|gx=x, g \in \Psi\} \subset G \times X$. 
More generally, let $\Phi \subset  G^{\ell}$ be a diagonal conjugacy class.  We define $\Imult{\ell}(\Phi) = \{(m_1, \ldots m_\ell, x)| (m_1, \ldots , m_\ell) \in \Phi \text{ and } 
m_i x = x \text{ for all } i = 1,\dots,\ell
\}$.
\end{df}

By definition, $I(\Psi)$ and $\Imult{\ell}(\Phi)$ are $G$-invariant subsets of $\IGX $
and $\Imult{\ell}_G(X)$, respectively. 
Since $G$ acts 
with finite stabilizer  on $X$, the conjugacy class $I(\Psi)$ is empty unless $\Psi$
consists of elements of finite order. Likewise, $\Imult{\ell}(\Phi)$ is empty unless every $\ell$-tuple $(m_1, \ldots , m_\ell) \in \Phi$ generates a finite group. Since conjugacy classes of elements of finite order are closed, $I(\Psi)$ and $\Imult{\ell}(\Phi)$
are closed.
\begin{prop} \label{prop.inertiadecomp}
(\cite[Prop.~2.11,~2.17]{EJK:10}) 
The conjugacy class $I(\Psi)$ is empty  for all but
  finitely many $\Psi$, and each $I(\Psi)$ is a union of connected components
  of $\IGX $. Likewise, $\Imult{\ell}(\Phi)$ is empty for all but finitely
  many 
  diagonal conjugacy classes 
  $\Phi \subset G^{\ell}$,
  and each $\Imult{\ell}(\Phi)$ is a union of
  connected components of $\Imult{\ell}_G(X)$.
\end{prop}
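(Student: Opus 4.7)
The proof establishes two things: finitely many $\Psi$ yield non-empty $I(\Psi)$, and each such $I(\Psi)$ is a union of connected components of $\IGX$. I would approach both via a finite stratification of $X$ by stabilizer type, together with closedness properties of torsion conjugacy classes.

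First, for finiteness, I would stratify $X$ into finitely many $G$-invariant locally closed strata $X_\alpha$ on which the stabilizer is conjugate to a fixed finite subgroup $H_\alpha \subset G$. This stratification is finite because $X$ is Noetherian and the stabilizers are finite. Any $g \in G$ that fixes some point of $X_\alpha$ lies in a $G$-conjugate of $H_\alpha$, and hence belongs to one of the finitely many $G$-conjugacy classes meeting the finite group $H_\alpha$. Summing over the finitely many strata gives only finitely many $\Psi$ with $I(\Psi) \neq \emptyset$.

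Next, for each such $\Psi$ I would show $I(\Psi)$ is closed in $\IGX$. By the finite stabilizer hypothesis, every $(g,x) \in \IGX$ has $g$ of finite order, and in characteristic zero finite-order elements are semisimple. The $G$-conjugacy class of a semisimple element in a linear algebraic group is closed---classical for reductive $G$, and in any case verifiable using the stratification, which bounds the order and reduces to the closed subvarieties $\{h^N = 1\}$. Hence $\Psi$ is closed in $G$, so $I(\Psi) = \IGX \cap (\Psi \times X)$ is closed in $\IGX$. Combining with finiteness, the complement $\bigsqcup_{\Psi' \neq \Psi} I(\Psi')$ is a finite union of closed sets, so $I(\Psi)$ is also open. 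Thus each $I(\Psi)$ is a union of connected components of $\IGX$, and the finiteness of non-empty $I(\Psi)$'s is also recovered from the fact that the finite morphism $\IGX \to X$ forces $\IGX$ to have only finitely many connected components.

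The statement for $\Imult{\ell}(\Phi)$ is formally identical. The higher inertia $\Imult{\ell}_G X$ is a fiber product of $\ell$ finite morphisms to $X$, hence itself finite over $X$ and Noetherian; the diagonal conjugacy class $\Phi \subset G^\ell$ is closed by applying closedness of torsion conjugacy classes coordinate-wise; and finiteness of non-empty $\Imult{\ell}(\Phi)$ follows by applying the stratification argument to the finitely many $\ell$-tuples of elements of each $H_\alpha$ that can arise as stabilizer data. The main technical subtlety throughout is verifying closedness of torsion conjugacy classes in a possibly non-reductive linear algebraic group; this is standard for reductive $G$ (the setting of primary interest in the paper), and for general $G$ can be bypassed by working locally on the stabilizer-type stratification, which gives an a priori bound on the orders of the relevant group elements.
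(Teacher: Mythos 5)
Your framework---stratify by stabilizer type for finiteness, show each $I(\Psi)$ is closed, deduce openness from finiteness---is sound and close in spirit to the argument the paper cites, and you correctly identify closedness of torsion conjugacy classes as the crux. However, the fallback you give for non-reductive $G$ is incomplete: placing $\Psi$ inside the closed subvariety $\{h^N=1\}$ only shows $\overline{\Psi}\subseteq\{h^N=1\}$, not that $\Psi$ is closed. What actually works, uniformly in characteristic zero, is rigidity: identify $\{h^N=1\}$ with $\Hom(\nz/N,G)$; since $H^1(\nz/N,\Lie(G))=0$, the tangent space to the Hom-scheme at any point agrees with that of its $G$-orbit, so every orbit is open, hence (by Noetherianity) there are finitely many, each clopen. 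This makes your reductive/non-reductive case split unnecessary.

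The genuine gap is in the $\ell$-tuple case. The claim that the diagonal conjugacy class $\Phi\subset G^\ell$ is closed ``by applying closedness of torsion conjugacy classes coordinate-wise'' is not a valid inference: $\Phi$ is the $G$-orbit of $(m_1,\dots,m_\ell)$ under \emph{simultaneous} conjugation and is in general a proper subset of the product $\Psi_1\times\cdots\times\Psi_\ell$ of the coordinate conjugacy classes, and a subset of a closed set need not be closed. The correct argument is the same rigidity applied to $\Hom(H,G)$ with $H=\langle m_1,\dots,m_\ell\rangle$ finite (finite because the tuple lies in some stabilizer $G_x$): $H^1(H,\Lie(G))=0$ gives open $G$-orbits in $\Hom(H,G)$, hence finitely many, each clopen; and $\Phi$ is the image of such an orbit under the closed immersion $\Hom(H,G)\hookrightarrow G^\ell$, $\phi\mapsto(\phi(m_1),\dots,\phi(m_\ell))$. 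Without this step (or some substitute), the clopenness of $\Imult{\ell}(\Phi)$ in $\Imult{\ell}_G X$ is not established.
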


\begin{df}
In the special case that $\Psi = (1)$ is the class of the identity element $1\in G$, the locus  $I((1)) = \{(1,x) | x\in X\} \subset \IGX$, often written $X^1$, is canonically identified with $X$.  It is an open and closed subset of $\IGX$, but is not necessarily connected.  We often call $X^1$ the \emph{untwisted sector of $\IGX$} and the other loci $I(\Psi) $ for $\Psi \neq (1)$ the \emph{twisted sectors}.

Similarly the groups $A^*_G(X^1)$  and $K_G(X^1)$ are summands of $A^*_G(\IGX )$ and $K_G(\IGX)$, respectively, and each is called the \emph{untwisted sector} of $A^*_G(\IGX )$ or $K_G(\IGX)$, respectively.  The summands of $A^*_G(\IGX)$ and $K_G(\IGX)$ corresponding to the twisted sectors of $\IGX$ are also called \emph{twisted sectors}.
\end{df}

\begin{df}\label{df:euler}
If $E$ is a $G$ equivariant vector bundle on $X$,
the element
$\lambda_{-1}(E^*) = \sum_{i=0}^{\infty} (-1)^i  [\Lambda^i E^*] \in K_G(X)$ 
 is called the \emph{K-theoretic Euler
    class} of $E$. 
(Note that this sum is finite.)

Likewise, we define the \emph{Chow-theoretic
  Euler class} of $E$ to be the element $c_{\textbf{top}}( E) \in A^*_G(X)$,
corresponding to the sum of the top Chern classes of $E$ on
each connected component of $[X/G]$ (See \cite{EdGr:98} for the definition and properties of equivariant Chern classes).  These definitions can be extended to
any nonnegative element by multiplicativity.  It will be convenient
to use the symbol $\euler(\cf)$ to denote both of these Euler classes for a
nonnegative element $\cf \in K_G(X)$.
\end{df}

\paragraph{{\bf Rank and augmentation homomorphisms.}}
If $[X/G]$ is connected, then then the rank of a vector bundle defines
an augmentation homomorphism $\epsilon \colon K_G(X) \to \Z$. If we
denote by $\one$ the class of the trivial bundle on $X$, then
the decomposition of an element $x = \epsilon(x)\one + (x- \epsilon(x)\one)$ 
gives a decomposition of $K_G(X)$ into a sum of $K_G(X)$-modules
$K_G(X) = \Z + I$, where $I = \ker (\epsilon)$ is the augmentation ideal.
From this point of view, we can equivalently define the augmentation
as the projection 
endomorphism $K_G(X) \to K_G(X)$ given by $x \mapsto \rk(x) \one$,
where $\rk$ is the usual notion of rank for classes in equivariant K-theory.

Since we frequently work with a group $G$ acting on a space $X$ where
the quotient stack $[X/G]$ is not connected, 
some care is required in the definition of the rank 
of a vector
bundle.  Note that for any $X$, the group $A^0_G(X)$ satisfies $A^0_G(X) =\Z^{\ell}$, 
where $\ell$
is the number of connected components of the quotient stack $\ix = [X/G]$.  Since 
$\ix$ has finite type, $\ell$ is finite.

\begin{df}
  Any $\alpha \in K_G(X)$ uniquely determines an element
$\alpha_U$ 
of $K(U)$ on each connected component $U$ of
  $[X/G]$.  
If we fix an ordering of the components, then we define the \emph{rank} of $\alpha$ to be the
  $\ell$-tuple in $\Z^{\ell} = A^0_G(X)$ whose component in the factor
  corresponding to a connected component $U$ is the usual rank of
$\alpha_U$.  This agrees with the degree-zero part of the
  Chern character:
\[
\rk(\alpha) := \Ch^0({\alpha})
\in A^0_G(X) = \Z^{\ell}.
\]
\end{df}

In this paper, where we study exotic $\lambda$ and $\psi$-ring
structures on equivariant K-theory of $K_G(I_GX)$, we will need
to define corresponding
exotic augmentations. To facilitate 
their definitions we introduce the more general notion of an augmented ring. 
\begin{df}\label{df:AugRing} (cf. \cite[p.143]{CaEi:56}).
An
 \emph{augmentation homomorphism}  of a ring $R$ 
is
  an endomorphism $\aug$ of  $R$ that is 
a
projection, 
i.e.,
$\aug \circ \aug = \aug$.
The kernel of $\aug$ is called the \emph{augmentation ideal} of $R$.
The ring $R$ is said to be a \emph{ring with augmentation}. 
\end{df}

\begin{rem}
  In the language of \cite[p.~143]{CaEi:56}, the image of $\aug$ is
  called the augmentation module. Our definition is more
  restrictive than that of \emph{loc.~cit.}, since it
  requires that $R$ split as $R = \epsilon(R) + I$ where $\epsilon(R)$
  is the augmentation module and $I$ is the augmentation ideal.

Note that all rings have two trivial augmentations coming from the identity and zero homomorphisms. 
However, in our applications, $\aug$ will preserve unity in $R$. 
\end{rem}

We illustrate the use of this terminology by defining an augmentation
homomorphism on $K_G(Y)$ when $[Y/G]$ is not necessarily connected.

\begin{df} \label{df.augequivkt}
In equivariant K-theory we define the  \emph{augmentation homomorphism} $\aug:K_G(Y)\to K_G(Y)$ to be
the map which, for each connected component $[U/G]$ of $[Y/G]$, sends each $\cf$ in $K_G(Y)$
supported on $U$ to the 
rank of $\cf$ times the structure sheaf $\co_U$
$$\aug(\cf|_{U}) :=\Ch^0(\cf|_U)\co_U.$$   
\end{df}

Thus, 
for equivariant K-theory, the image of $\aug$ is isomorphic as a ring to 
$\nz^{\oplus \ell}$, 
where $\ell$ is the number of connected components of $[Y/G]$. 
However,
we will see that this property need not hold for inertial K-theory.

\subsection{Inertial products, Chern characters, and inertial pairs}
We review here the results from \cite{EJK:12a}, defining a
generalization of orbifold cohomology, obstruction bundles, age
grading, and stringy Chern character, by defining \emph{inertial
  products} on $K_G(\IGX )$ and $A^*_G(\IGX )$ using \emph{inertial
  pairs} $(\R,\cs)$, where 
$\R$ is a $G$-equivariant vector bundle on $\II_G X$
and $\cs\in K_G(\IGX)_\nq$ is a nonnegative class satisfying certain compatibility properties.

For each such pair, there is also a rational grading 
on the total Chow group, and a
Chern character ring homomorphism. There are many inertial pairs, and
hence there are many associative inertial products on $K_G(\IGX )$ and
$A^*_G(\IGX )$ with rational gradings and Chern character ring
homomorphisms.  The orbifold products on $K(I\ix)$ and $A^*(I\ix)$ and
the Chern character homomorphism of \cite{JKK:07} are a special case,
as is the virtual product of \cite{GLSUX:07}.

\begin{df}
If $\R$ is a vector bundle on $\II_G X$, we define
products on $A^*_G(\IGX )$ and $K_G(\IGX )$ via the following 
formula: 
\begin{equation} \label{eq.inertialprod}
x \star_\R y := \mu_*\left(e_1^*x \cdot e_2^*y \cdot \euler(\R)\right),
\end{equation}
where $x,y \in A^*_G(\IGX )$ (respectively $K_G(\IGX )$),
where $\mu \colon \II_GX 
\to \IGX$ is the composition map,
and $e_1, e_2 \colon 
\II_GX 
\to \IGX$ are the evaluation maps.
\end{df}

To define an inertial pair requires a little more notation from
\cite{EJK:10}, which we recall here.
Consider $(m_1, m_2,m_3) \in G^3$ such that $m_1m_2m_3 = 1$, and let 
$\Phi_{1,2,3} $ be the conjugacy class of $(m_1,m_2,m_3)$. 
Let $\Phi_{12,3}$
be the conjugacy class of $(m_1m_2,m_3)$ and $\Phi_{1,23}$ the
conjugacy class of $(m_1,m_2m_3)$.  Let $\Phi_{i,j}$ be the conjugacy
class of the pair $(m_i,m_j)$ with $i < j$. Finally, 
let $\Phi_{ij}$
be the conjugacy class of $m_im_j$, and 
let $\Phi_{i}$
be the conjugacy class of $m_i$.  There are composition maps $\mu_{12,3} \colon {\Imult3}(\Phi_{1,2,3}) \to
\II(\Phi_{12,3})$, and $\mu_{1,23} \colon {\Imult3}(\Phi_{1,2,3}) \to
\II(\Phi_{1,23})$.
The various maps we have defined are related by the
following Cartesian diagrams where all maps are l.c.i.~morphisms.
\begin{equation} \label{diag.excess12}
\begin{diagram}
{\Imult3}(\Phi_{1,2,3}) &\rTo^{e_{1,2}} & \II(\Phi_{1,2}) \\
\dTo^{\mu_{12,3}}  & &  \dTo_{\mu}\\
\II(\Phi_{12,3}) & \rTo^{e_1} & 
I(\Phi_{12})
\end{diagram}
\qquad 
\begin{diagram}
{\Imult3}(\Phi_{1,2,3}) &\rTo^{e_{2,3}}  & \II(\Phi_{2,3}) \\
\dTo^{\mu_{1,23}} & & \dTo_{\mu}\\
\II(\Phi_{1,23}) & \rTo^{e_1} & 
I(\Phi_{23})
\end{diagram}
\end{equation}
Let $E_{1,2}$ and $E_{2,3}$ be the respective excess normal bundles of the two diagrams \eqref{diag.excess12}.

\begin{df}\label{df.inertialpair}
Given a nonnegative element $\cs \in K_G(\IGX)_\nq$ and 
$G$-equivariant vector bundle $\R$ on $\II_G X$
we say that $(\R,\cs)$ is an \emph{inertial pair} if the following conditions hold:
\begin{enumerate}[label=(\alph*)]
\item 
The identity
\begin{equation}\label{eq:DefOfR}
{\R} = e_1^* {\cs} + e_2^* {\cs} - \mu^*{\cs} + T_{\mu}
\end{equation}
holds in $K_G(\II_G X)$, 
where $T_{\mu} = T\Itwo_G X  - \mu^*(T\IGX)$ is the relative tangent bundle of $\mu$.
\item  
${\R}|_{\II(\Phi)} = 0$
for every conjugacy class
$\Phi\subset G \times G$ such that $e_1(\Phi) = 1$ or $e_2(\Phi) =1$.
\item 
$i^* {\R} = {\R},$
where $i \colon \II_G X \to \II_G X$ is the 
isomorphism
 $i(m_1,m_2,x) = (m_1m_2m_1^{-1},m_1,x)$.

\item 
$e_{1,2}^*{\R} + \mu_{12,3}^*{\R} + E_{1,2} = 
e_{2,3}^*{\R} + \mu_{1,23}^*{\R} + E_{2,3}$
for each triple $m_1,m_2, m_3$ with $m_1m_2m_3=1$.
\end{enumerate}
\end{df}

\begin{prop}[\protect{\cite[\S3]{EJK:10}}] 
If $(\R,\cs)$ is an inertial pair, then the $\star_{\R}$ product is commutative and associative with identity 
${\one}_X$, where
${\one}_X$ is the identity class in the untwisted sector $A^*_G(X^1)$ (respectively $K_G(X^1)$).
\end{prop}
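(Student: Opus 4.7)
The plan is to verify the three ring axioms (identity, commutativity, associativity) using axioms (b), (c), and (d) of Definition~\ref{df.inertialpair} respectively; axiom (a) is the rank/age bookkeeping that makes the stringy grading and Chern character work, and is not needed here.

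For the \emph{identity property}, the class $\one_X$ is supported on the untwisted sector $X \subset \IGX$, which is a union of connected components, so $e_1^* \one_X$ is supported on the clopen substack $\{m_1 = 1\} \subset \II_G X$. Axiom (b) forces $\R|_{\{m_1=1\}} = \Oc$, and so $\euler(\R) = 1$ there. On this locus both $e_2$ and $\mu$ coincide with the canonical isomorphism $\{m_1=1\} \xrightarrow{\sim} \IGX$, so the projection formula gives $\one_X \star_\R y = \mu_*(e_1^* \one_X \cdot e_2^* y) = y$.

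For \emph{commutativity}, the involution $i$ from axiom (c) satisfies $\mu \circ i = \mu$ and, on the quotient stack $\II\ix$, interchanges the evaluations: $e_1 \circ i = e_2$ and $e_2 \circ i = e_1$ (the second equality holds on the nose; the first after passing to the stack quotient). Since $i^2 = \id$ and $i^* \R = \R$, applying $\mu_* i_* = \mu_*$ yields
\[
x \star_\R y = \mu_*\bigl(i^*(e_1^* x \cdot e_2^* y \cdot \euler(\R))\bigr) = \mu_*\bigl(e_2^* x \cdot e_1^* y \cdot \euler(\R)\bigr) = y \star_\R x.
\]

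\emph{Associativity} is the substantive step and uses axiom (d) together with both Cartesian diagrams~\eqref{diag.excess12}. The plan is to rewrite each associator as a pushforward from the triple inertia $\Imult{3}_G X$ along the full multiplication $\hat\mu = \mu \circ \mu_{12,3} = \mu \circ \mu_{1,23}$. Applying the excess intersection formula to the left-hand square together with the projection formula gives
\[
(x \star y) \star z = \hat\mu_*\bigl(\hat e_1^* x \cdot \hat e_2^* y \cdot \hat e_3^* z \cdot \euler(e_{1,2}^* \R + \mu_{12,3}^* \R + E_{1,2})\bigr),
\]
where $\hat e_1, \hat e_2, \hat e_3$ are the three evaluation maps on $\Imult{3}_G X$ (factored through $e_{1,2}$ or $\mu_{12,3}$); the right-hand square yields the analogous formula with $(e_{2,3}^* \R, \mu_{1,23}^* \R, E_{2,3})$ replacing $(e_{1,2}^* \R, \mu_{12,3}^* \R, E_{1,2})$. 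Since $\euler$ is multiplicative on sums of non-negative K-theory classes, axiom (d) identifies the two Euler factors and associativity follows. The main technical obstacle is carefully applying the excess intersection and projection formulas on the possibly disconnected stacks $\II(\Phi_{i,j})$ and $I(\Psi_{ij})$ and tracking how the three evaluation maps on $\Imult{3}_G X$ factor through each side of each Cartesian square; once this bookkeeping is settled, axiom (d) delivers the result in one line.
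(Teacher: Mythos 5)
The paper does not prove this proposition: it is cited from \cite[\S3]{EJK:10}, so there is no in-paper argument to compare against. Your proof follows the approach the citation is pointing to and is correct in structure: axiom (b) handles the unit, the map $i$ of axiom (c) gives commutativity, and the excess intersection formula applied to both Cartesian squares of~\eqref{diag.excess12} together with axiom (d) gives associativity, with axiom (a) indeed playing no role at this stage.

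Two minor blemishes, neither of which affects the validity. First, you assert $i^2 = \id$. This is false on the scheme $\II_GX$: a direct computation gives $i^2(m_1,m_2,x) = (m_1m_2m_1m_2^{-1}m_1^{-1},\, m_1m_2m_1^{-1},\, x)$, which equals $(m_1,m_2,x)$ only when $m_1$ and $m_2$ commute. It is true that $i$ descends to the genuine involutive swap $(m_1,m_2)\mapsto(m_2,m_1)$ on the quotient stack $\II\ix$, which is presumably what you had in mind; in any case your commutativity argument never actually invokes $i^2=\id$, only that $i$ is an isomorphism satisfying $\mu\circ i = \mu$ and $i^*\R = \R$, together with the interchange of evaluation maps on the quotient, all of which you verify. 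Second, Definition~\ref{df.inertialpair}(b) reads $\R|_{\II(\Phi)} = \mathscr{O}$, but for the unit computation you need $\euler(\R)=1$ on that locus, which requires $\R$ to restrict to the \emph{zero} class, not the trivial line bundle (whose K-theoretic Euler class is $0$). You silently make the correct interpretation; this appears to be a notational slip in the paper rather than a defect in your argument.
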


\begin{prop}\cite[Prop 3.8]{EJK:12a}  
If $(\R,\cs)$ is an inertial pair, then the map 
$$\inCh \colon K_G(\IGX )_\Q \to A^*_G(\IGX )_\Q,$$ defined by 
$\inCh(V) = \Ch(V) \cdot \Td(-{\cs})$,  
is a ring homomorphism with respect to the 
$\star_{\R}$-inertial products on $K_G(\IGX )$ and $A^*_G(\IGX )$. 
\end{prop}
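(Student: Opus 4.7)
The plan is to apply Grothendieck--Riemann--Roch to the pushforward along the composition map $\mu\colon \II_G X \to \IGX$ and to compare the resulting expression for $\inCh(x\star_\R y)$ with the direct expansion of $\inCh(x)\star_\R \inCh(y)$. Everything should reduce, modulo standard $\Td$-identities, to the defining relation~(a) of Definition~\ref{df.inertialpair}.

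Concretely, for $x,y\in K_G(\IGX)_\Q$, GRR applied to the l.c.i.\ morphism $\mu$ gives $\Ch(\mu_*\alpha)=\mu_*(\Ch(\alpha)\cdot\Td(T_\mu))$. Combining this with the projection formula (used to pull $\Td(-\cs)$ inside as $\mu^*\Td(-\cs)$) and the multiplicativity of $\Ch$ yields
\begin{align*}
\inCh(x\star_\R y)
&= \Ch\bigl(\mu_*(e_1^*x\cdot e_2^*y\cdot \lambda_{-1}(\R^*))\bigr)\cdot\Td(-\cs)\\
&= \mu_*\bigl(e_1^*\Ch(x)\cdot e_2^*\Ch(y)\cdot \Ch(\lambda_{-1}(\R^*))\cdot\Td(T_\mu)\cdot\mu^*\Td(-\cs)\bigr).
\end{align*}
Directly expanding from the definitions (using the Chow-theoretic Euler class $\ctop(\R)$ in the Chow version of $\star_\R$),
\[
\inCh(x)\star_\R\inCh(y)
= \mu_*\bigl(e_1^*\Ch(x)\cdot e_2^*\Ch(y)\cdot e_1^*\Td(-\cs)\cdot e_2^*\Td(-\cs)\cdot \ctop(\R)\bigr).
\]

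Applying the splitting principle to the Chern roots $\alpha_i$ of $\R$, one has the standard identity $\Ch(\lambda_{-1}(\R^*))=\prod_i(1-e^{-\alpha_i})=\ctop(\R)\cdot\Td(\R)^{-1}$. Replacing the K-theoretic Euler class in the first expression accordingly, equality of the two integrands reduces to
\[
\Td(T_\mu)\cdot\Td(\R)^{-1}\cdot\mu^*\Td(-\cs)
= e_1^*\Td(-\cs)\cdot e_2^*\Td(-\cs)
\]
in $A^*_G(\II_G X)_\Q$. Since $\Td$ is the exponential of an additive power series in its argument, this equality is equivalent to the K-theoretic identity
\[
\R = e_1^*\cs + e_2^*\cs - \mu^*\cs + T_\mu,
\]
which is precisely condition~(a) of Definition~\ref{df.inertialpair}. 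This establishes multiplicativity.

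Preservation of the unit, $\inCh(\one_X)=\one_X$, follows once one observes that the restriction of $\cs$ to the identity component $X\subset\IGX$ vanishes: pulling condition~(a) back to the locus $m_1=m_2=1$, where $\R$ is trivial by condition~(b) and $T_\mu=0$, forces $\cs|_X=0$, so $\Td(-\cs)|_X=1$. The only technical subtlety is the careful bookkeeping between the two Euler classes (K-theoretic $\lambda_{-1}(\R^*)$ and Chow-theoretic $\ctop(\R)$): the $\Td(\R)$ correction from converting between them must combine with the $\Td(T_\mu)$ factor produced by GRR in exactly the way dictated by the inertial-pair relation~(a), which is the sole non-formal input to the argument.
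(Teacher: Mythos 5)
Your proof is correct and follows the natural Grothendieck--Riemann--Roch reduction: apply GRR to $\mu_*$, use the projection formula to pull $\Td(-\cs)$ inside, convert between the K-theoretic and Chow-theoretic Euler classes via $\Ch(\lambda_{-1}(\R^*))=c_{\mathrm{top}}(\R)\Td(\R)^{-1}$, and observe that the residual Todd factors match precisely when condition (a) of Definition~\ref{df.inertialpair} holds. Since the paper itself only cites this result from \cite{EJK:12a} without reproducing a proof, I can only say that your argument is the standard one and is the one the cited reference uses. One small wording quibble: you say the $\Td$-identity is ``equivalent to'' the K-theoretic identity $\R = e_1^*\cs + e_2^*\cs - \mu^*\cs + T_\mu$; only the implication from K-theory to $\Td$-equality is needed (and true), whereas the converse is not automatic because $\Td$ is not injective on K-theory. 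For the unit, note that condition (a) specialized to $m_1=m_2=1$ gives $\R|_X = \cs|_X$ (since $T_\mu=0$ there), and condition (b) forces this to vanish in K-theory, so $\cs|_X=0$ and $\Td(-\cs)|_X=1$ as you say.
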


It is shown in \cite{EJK:12a} that there are two inertial pairs for every $G$-equivariant vector bundle on $X$.  Most of our results in this paper apply to general inertial pairs, but we have a special interest in the inertial pair associated to the \emph{orbifold product} of \cite{ChRu:04, AGV:02, FaGo:03, JKK:07, 
    EJK:10} and in the inertial pair associated to the \emph{virtual product} of \cite{GLSUX:07}.

\begin{df}\label{df:TT}  Let $p:X \to \ix$ be the quotient map, $T_\ix$ be the tangent bundle of $\ix$,
and
 $\T = p^*T_\ix$ in $K_G(X)$. 
In \cite[Lemma 6.6]{EJK:10} we proved that $\T = T_X -\mathfrak{g}$, where $\mathfrak{g}$ is the Lie algebra of $G$ and $T_X$ is the tangent bundle on $X$.
\end{df}

\begin{df}\label{df:orbifold-in-pair}
The inertial pair associated to the \emph{orbifold product} is given by the element $\cs = \cs(\T) \in K_G(\IGX)_\nq$, defined as follows.  For any $m\in G$ of finite order $r$, the element $\cs$, when restricted to $X^m 
=\{(x,m) | mx = x\} \subset \IGX$, 
is  
\begin{equation}
\cs_m := \sum_{k=1}^{r-1}\frac{k}{r}\T_{m,k},
\end{equation}
where $\T_{m,k}$ is the eigenbundle of $\T$ on which $m$ acts as $e^{2\pi i k/r}$.
The first property of inertial pairs (see Definition~\ref{df.inertialpair}.(a)) then gives the explicit formula for $\R$:
\[
{\R} = e_1^* {\cs} + e_2^* {\cs} - \mu^*{\cs} + T_{\mu}.
\]
\end{df}

\begin{df}\label{df:virtual-in-pair}
The inertial pair associated to the \emph{virtual product} is given by 
$
\cs = \N$,
where $\N$ is the quotient $q^*T_X/T_{\IGX}$ where 
$q\colon 
\IGX \to X$ is the canonical morphism, and 
\begin{equation}\label{eq:VirtualR}
\R = \T|_{\II_G X} + \T_{\II_G X} - e_1^*\T_{\IGX } - e_2^*\T_{\IGX },
\end{equation}
where $\res{\T}{\II_G X}$ refers to the pullback of the bundle $\T$ to $\II_G X$
via 
the natural map $\II_G X \to X$, where $\T_{\IGX }$ denotes the
pullback to $\IGX$ of the tangent bundle to
$I\ix= [\IGX /G]$, and where $\T_{\II_G X}$ denotes the pullback to $\II_G X$ of the
tangent bundle to the stack $I^2{\stack X}= [\II_G X/G]$. 
\end{df}
\begin{rem}
By abuse of notation we will refer to the bundle $\N$ defined above
as the normal bundle to the morphism $\IGX \to IX$.
\end{rem}
\begin{rem}
In \cite{EJK:12a} we showed that the pairs for both the orbifold product and the virtual orbifold product are indeed inertial pairs. 
\end{rem}

\begin{df}
Given any nonnegative element $\cs \in K_G(\IGX)_\nq$,
we define the ${\cs}$-\emph{age} on a 
component $U$ of $\IGX $ 
corresponding to a connected component $[U/G]$ of 
$[I_GX/G]$
to be the rational rank of ${\cs}$ on the component $U$:
  $$\age_{\cs}(U) = \rk(\cs)_{U}.$$
   We define the \emph{${\cs}$-degree} of an element $x \in
  A^*_G(\IGX )$ 
  on such a component
$U$ of $\IGX$
   to be 
           $$\res{\deg_\cs x}{U}  = \res{\deg x}{U} + \age_{\cs}(U),$$
   where $\deg x$ is
  the degree with respect to the usual grading by codimension on
  $A^*_G(\IGX )$.
Similarly, if $\cf\in K_G(\IGX)$ 
is supported on $U$, 
then its $\cs$-degree 
is
\[
\deg_\cs \cf = \age_{\cs}(U)\bmod\nz.
\]
This yields a $\nq/\nz$-grading of the group $K_G(\IGX)$.
\end{df}

\begin{prop}\cite[Prop 3.11]{EJK:12a} 
  If $({\R},\cs)$ is an inertial pair, then the ${\R}$-inertial
  products on $A^*_G(\IGX)$ and $K_G(\IGX)$ respect the $\cs$-degrees. Furthermore, the inertial Chern character homomorphism $\inCh:K_G(\IGX)\to A_G^*(\IGX)$ preserves the $\cs$-degree modulo $\nz$.
  \end{prop}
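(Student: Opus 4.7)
The plan is to verify both claims by direct bookkeeping of codimensions and ranks, with all content coming from the defining identity
\[
\R = e_1^* \cs + e_2^* \cs - \mu^* \cs + T_\mu
\]
in $K_G(\II_G X)$ given in Definition~\ref{df.inertialpair}(a). First I would reduce to homogeneous classes $x,y$ supported on fixed connected components $U_1, U_2 \subseteq \IGX$, working componentwise on a piece $V$ of $\II_G X$ with $e_1(V) = U_1$, $e_2(V) = U_2$, and $\mu(V) = U_{12}$.

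For the Chow assertion, suppose $x$ has ordinary codimension $d_1$ on $U_1$ and $y$ has ordinary codimension $d_2$ on $U_2$. Pullbacks preserve codimension, so $e_1^*x \cdot e_2^*y$ sits in codimension $d_1+d_2$ on $V$; multiplication by $\euler(\R)$ raises codimension by $\rk(\R|_V)$; and the proper l.c.i.\ pushforward $\mu_*$ lowers codimension by the virtual relative dimension $\rk(T_\mu|_V)$. Taking ranks on both sides of the fundamental identity yields
\[
\rk(\R|_V) - \rk(T_\mu|_V) = \age(U_1) + \age(U_2) - \age(U_{12}),
\]
so $x \star_\R y$ lives in ordinary codimension $d_1 + d_2 + \age(U_1) + \age(U_2) - \age(U_{12})$ on $U_{12}$; adding back $\age(U_{12})$ produces the desired equality $\deg_\cs(x \star_\R y) = \deg_\cs x + \deg_\cs y$.

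The K-theory case uses the same setup but only records $\age(U) \bmod \nz$ for the supporting component; since $x \star_\R y$ is supported on $U_{12}$, the required compatibility reduces to the congruence $\age(U_1) + \age(U_2) \equiv \age(U_{12}) \pmod{\nz}$, which is immediate from the displayed rank identity once one observes that $\rk(\R|_V)$ and $\rk(T_\mu|_V)$ are both integers. For the Chern character assertion, note that $\inCh(\cf) = \Ch(\cf)\cdot \Td(-\cs)$ is supported on the same component $U$ as $\cf$, and each homogeneous piece of $\Ch(\cf)\cdot \Td(-\cs)$ lies in an integer codimension $d$, so its $\cs$-degree is $d + \age(U) \equiv \age(U) \pmod{\nz}$, matching $\deg_\cs \cf$. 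There is no serious obstacle: the single substantive point is that integrality of $\rk(\R)$ and $\rk(T_\mu)$ in part (a) of the inertial-pair definition forces the ages to be additive under composition modulo $\nz$.
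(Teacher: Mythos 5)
Your proof is correct and is the natural argument; the paper itself does not reprove this proposition but cites \cite{EJK:12a}, and the rank identity you extract by applying $\rk(\cdot)$ to the defining relation $\R = e_1^*\cs + e_2^*\cs - \mu^*\cs + T_\mu$ of Definition~\ref{df.inertialpair}(a) is exactly what makes the $\cs$-degrees additive in Chow and additive mod $\nz$ in $K$-theory, with the Chern-character claim reducing as you say to the fact that $\Ch(\cf)\cdot\Td(-\cs)$ is supported on the same sector as $\cf$ and has only integer ordinary codimensions. The one point worth making explicit is that the Gysin pullbacks along $e_1,e_2$ preserve codimension and the proper pushforward along $\mu$ shifts it by the integer $\rk(T_\mu|_V)$ because these are l.c.i.\ morphisms between smooth stacks, which is part of the standing setup for inertial pairs (see the discussion around diagram \eqref{diag.excess12}).
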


\begin{df}
Let $A_G^{\bra{q}}(\IGX)$ be the subspace in $A_G^*(\IGX)$ of elements with an $\cs$-degree of $q\in \nq^\ell$, where $\ell$ is the number of connected components of $I\ix$.
\end{df}

\begin{df} \label{df.inertialaugkt}
Given a nonnegative ${\cs}\in K_G(\IGX )_\nq$, 
the 
homomorphism  
$\inCh^0:K_G(\IGX)\to A_G^{\bra{0}}(\IGX)$ is called the
\emph{inertial rank 
for $\cs$} or just the \emph{$\cs$-rank}. 

The \emph{inertial augmentation homomorphism} $\ovaug:K_G(\IGX)\to
K_G(\IGX)$ is the map which for each connected component $[U/G]$ of
$[(\IGX)/G]$ sends each $\cf$ in $K_G(\IGX)$ supported on $U$ to
\[
\ovaug(\res{\cf}{U}) = \inCh^0(\res{\cf}{U})
\co_{U}.
\]
\end{df}
Hence, if $\star$ is an inertial product associated to an inertial pair
$({\R}, {\cs})$, then 
$(K_G(\IGX),\star,1,\ovaug)$ is 
a ring with augmentation.
\begin{rem}
Note that
the restriction $\res{\inCh^0(\cf)}{U}$ of the inertial rank to a 
component is equal to the classical rank if the 
${\cs}$-age 
of that component is zero, and $\res{\inCh^0(\cf)}{U}$ vanishes if the age is nonzero. 
Hence the product $\inCh^0(\res{\cf}{U})
\co_{U}$ makes sense.
\end{rem}
\begin{df}\label{df:Gorenstein}
    An inertial pair $({\R}, {\cs})$ is
  called \emph{Gorenstein} if ${\cs}$ has integral 
   rank
    and \emph{strongly Gorenstein} if ${\cs}$ is represented by a vector
  bundle.

The Deligne-Mumford stack ${\stack X} = [X/G]$ is \emph{strongly Gorenstein}
if the inertial pair associated to the orbifold product (as in Definition~\ref{df:orbifold-in-pair}) is strongly Gorenstein.
\end{df}
Note that the inertial pair for the virtual product is always strongly Gorenstein.

\section{Review of $\lam$-ring and $\psi$-ring structures in equivariant K-theory}

In this section, we review the $\lam$-ring and $\psi$-ring structures in
equivariant K-theory and describe the Bott cannibalistic classes $\theta^j$, as well as the Grothendieck $\gamma$-classes.  The main theorems about these classes are the Adams-Riemann-Roch Theorem (Theorem~\ref{thm:ARR}) and Theorem~\ref{thm:chpsigamma}, which describes relations among the Chern Character, the $\psi$-classes, the Chern classes, and the $\gamma$-classes.  

Recall that a $\lam$-ring is a commutative ring $R$ with unity $1$ and with a map 
$\lam_t:R\to R[[t]]$, where 
\begin{equation}\label{eq:Lambdai}
\lam_t(a) =: \sum_{i\geq 0} \lam^i(a) t^i,
\end{equation}
such that 
the following are satisfied for all $x,y$ in $R$ and
for all integers $m, n\geq 0$: 
\[
\lam^0(x) = 1, \quad
\lam_t(1) = 1 + t, \quad
\lam^1(x) = x, \quad
\lam_t(x+y) = \lam_t(x)\lam_t(y),
\]
\begin{equation}\label{lamring.Pn}
\lam^n(x y) =  \PP_n(\lam^1(x),\ldots,\lam^n(x),\lam^1(y),\ldots,\lam^n(y)),
\end{equation}
\begin{equation}\label{lamring.Pmn}
\lam^m(\lam^n(x)) =  \PP_{m,n}(\lam^1(x),\ldots,\lam^{mn}(x)),
\end{equation}  
where $\PP_{n}$, and $\PP_{m,n}$ are certain universal polynomials, independent of $x$ and $y$ (see \cite[\S I.1]{FuLa:85}).

\begin{df} \label{df.lamalg}
If a $\lam$-ring $R$ is a $\nk$-algebra,
where $\nk$ is a field of characteristic 0, 
then we call $(R,\cdot,1,\lam)$ a \emph{$\lam$-algebra over $\nk$} if, 
for all $\alpha$ in $\nk$ and all $a$ in $R$, we have
\begin{equation}\label{eq:kAlgLambda}
\lambda_t(\alpha a) = \lambda_t(a)^\alpha
: = \exp(\alpha \log \lambda_t(a)).
\end{equation}
Note that $\log \lambda_t$ makes sense because any series for $\lambda_t$ starts with $1$.
\end{df}

\begin{rem}\label{rem:UniversalPoly}
The significance of the universal polynomials in the definition   of a
$\lam$-ring is that one can calculate $\lam^n(x y)$ and   $\lam^m(\lam^n(x))$
in terms of $\lam^i(x)$ and $\lam^j(y)$ by applying a formal splitting principle.

For example, suppose we wish to express $\lam_t(x\cdot y)$ in terms of $\lam_t(x)$ and
$\lam_t(y)$. First, replace $x$ by the formal sum $x\mapsto \sum_{i=1}^\infty x_i$, 
where we assume that $\lam_t(x_i) = 1 + t x_i$ for all $i$,
and similarly replace $y$ by the formal sum $y \mapsto \sum_{i=1}^\infty y_i$ in $\lam_t(x\cdot y)$,
where we assume that $\lam_t(y_i) = 1 + t y_i$ for all $i$. 
The fact that $\lam_t(x_i) = 1 + t x_i$ and $\lam_t(y_j) = 1 + t y_j$ means that 
$\lam_t(x_i y_j) = 1 + t x_i y_j$, and multiplicativity gives us
\[
\lam_t(x\cdot y) = \prod_{i,j=1}^\infty (1 + t x_i y_j).
\]
Therefore, $\lam^n(x\cdot y)$ corresponds to 
the $n$th elementary symmetric function $e_n(x y)$ in the 
variables $\{ x_i y_j\}_{i,j=1}^\infty$, but $e_n(x y)$ 
can be uniquely expressed as a polynomial
$\PP_n$ in the variables $\{ e_1(x),\ldots,e_n(x), e_1(y),\ldots,e_n(y)\}$,
where $e_q(x)$ denotes the $q$th elementary symmetric function in the $\{
x_i \}_{i=1}^\infty$ variables and  $e_r(y)$ denotes the $r$th elementary
symmetric function in the $\{ y_i \}_{i=1}^\infty$ variables.  Replacing
$e_q(x)$ by $\lam^q(x)$ and $e_r(y)$ by $\lam^r(y)$ in $\PP_n$ for all
$q,r\in\{1,\ldots,n\}$ yields the universal polynomial
$\PP_n(\lam^1(x),\ldots,\lam^n(x),\lam^1(y),\ldots,\lam^n(y))$ appearing in
the definition of a $\lam$-ring. 

A  similar analysis holds for $\PP_{m,n}$. 
\end{rem}

A closely related structure is that of a $\psi$-ring.
\begin{df}\label{df:PsiOps}
A commutative ring $R$ with unity $1$, together with a
collection of ring homomorphisms $\psi^n:R\to R$ for each $n\geq 1$, is called a $\psi$-ring if, 
for all $x,y$ in $R$ and for all integers $n\geq 1$, we have 
\[\psi^1(x) = x, \qquad \text{ and } \qquad  \psi^m(\psi^n(x)) = \psi^{m n}(x).
\]
The map $\psi^i:R\to R$ is called the\emph{ $i$th Adams operation (or power operation)}.

If the $\psi$-ring $(R,\cdot,1,\psi)$ is a $\nk$-algebra, 
then $(R,\cdot,1,\psi)$ is said to be a \emph{$\psi$-algebra over $\nk$} if, in addition, $\psi^n$ is a $\nk$-linear map.
\end{df}

\begin{thm}[cf. \cite{Knu:73} p.49]\label{thm:LambdaFromPsi}
Let $(R,\cdot,1,\lam)$ be a commutative $\lam$-ring, and let $\psi_t:R\to R[[t]]$ be given by
\begin{equation}\label{eq:PsiFromLambda}
\psi_{t} = -t\frac{d\log\lam_{-t}}{dt}.
\end{equation} 
Expanding $\psi_t$ as 
$\psi_t:=\sum_{n\geq 1}\psi^n t^n$ 
defines $\psi^n:R\to R$ 
for all $n\ge1$, and the resulting ring $(R,\cdot,1,\psi)$ is a $\psi$-ring.  

Conversely, if $(R,\cdot,1,\psi)$ is a $\psi$-ring and if $\lambda_t:R_\nq\to
R_\nq[[t]]$ is defined by 
\begin{equation}\label{eq:LambdaFromPsi}
\lam_t = \exp\left(\sum_{r\geq 1} (-1)^{r-1} \psi^r \frac{t^r}{r}\right),
\end{equation} 
then
$(R_\nq,\cdot,1,\lam)$ is a $\lam$-algebra over $\nq$.
\end{thm}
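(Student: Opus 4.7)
The plan is to exploit the formal splitting principle together with the classical duality (Newton's identities) between elementary symmetric polynomials and power sums. For both directions, the strategy is to reduce all axioms to identities in the ring of symmetric functions in formal ``line roots,'' where everything becomes transparent.

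For the first direction, I would apply the splitting principle by writing $\lam_t(x) = \prod_i(1+x_it)$ for formal line roots $x_i$ satisfying $\lam_t(x_i) = 1+tx_i$. Under this splitting, the defining formula $\psi_t = -t\,d\log\lam_{-t}/dt$ expands, via $-\log(1-u)=\sum_{r\geq 1}u^r/r$, to $\psi^n(x) = \sum_i x_i^n$, the $n$-th power sum of the roots. The four axioms of Definition~\ref{df:PsiOps} then reduce to elementary identities: $\psi^1(x) = \sum_i x_i = \lam^1(x) = x$; additivity~(\ref{psidef:two}) holds because the formal roots of $x+y$ are $\{x_i\}\sqcup\{y_j\}$; multiplicativity holds because the roots of $xy$ are $\{x_iy_j\}$, so $\sum_{i,j}(x_iy_j)^n = \bigl(\sum_i x_i^n\bigr)\bigl(\sum_j y_j^n\bigr)$; and composition~(\ref{psidef:four}) follows because the formal roots of $\psi^n(x)$ are $\{x_i^n\}$, yielding $\sum_i (x_i^n)^m = \psi^{mn}(x)$.

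For the converse, I would work over $R_\nq$ (rationality is essential since the exponential formula involves dividing by $r$). The first task is to verify that the exponential formula is the formal inverse of $\psi_t = -t\,d\log\lam_{-t}/dt$, which is a direct calculation in $R_\nq[[t]]$ using $\exp$ and $\log$. With this in hand, axioms (1)--(4) of a $\lam$-ring follow formally: axiom~(1) from $\exp(0)=1$, axiom~(4) from additivity of $\psi^r$ combined with $\exp(a+b)=\exp(a)\exp(b)$, and axiom~(3) by extracting the $t$-linear coefficient of the exponential, which equals $\psi^1(x)=x$.

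The main obstacle will be axioms~(\ref{lamring.Pn}) and~(\ref{lamring.Pmn}), which assert that $\lam^n(xy)$ and $\lam^m(\lam^n(x))$ are given by the prescribed universal polynomials $\PP_n$ and $\PP_{m,n}$. As explained in Remark~\ref{rem:UniversalPoly}, these polynomials are characterized by the formal splitting into line elements. I would invoke the splitting principle one more time: on the free $\psi$-algebra over $\nq$ on countably many generators $\{x_i\}$ (with $\psi^n(x_i) = x_i^n$), the map sending elementary symmetric polynomials in the $\{x_i\}$ to $\lam^n$ and power sums to $\psi^n$ is an isomorphism onto the classical ring of symmetric functions; both the exponential formula and the universal polynomials $\PP_n,\PP_{m,n}$ are pulled back from this universal model. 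Hence the identities in the $\lam$-ring come from the corresponding classical symmetric-function identities. Finally, the $\nk$-algebra property~(\ref{eq:kAlgLambda}) is immediate from $\nk$-linearity of each $\psi^r$:
\[
\lam_t(\alpha a)=\exp\!\Bigl(\sum_{r\geq 1}(-1)^{r-1}\psi^r(\alpha a)\tfrac{t^r}{r}\Bigr)=\exp\!\Bigl(\alpha\sum_{r\geq 1}(-1)^{r-1}\psi^r(a)\tfrac{t^r}{r}\Bigr)=\lam_t(a)^\alpha.
\]
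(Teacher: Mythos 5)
The paper gives no proof of this theorem; it is attributed to Knutson \cite{Knu:73}, and the remark immediately following merely restates the heuristic that $\lam^k$ corresponds to the $k$-th elementary symmetric polynomial and $\psi^k$ to the $k$-th power sum, with Equation~(\ref{eq:LambdaFromPsi}) being the Newton relation between them. Your proposal supplies the standard argument behind that heuristic, via the formal splitting principle and universal verification in the ring of symmetric functions, which is essentially Knutson's route; so you are not taking a different path, you are filling in the one the paper declines to take, and the approach is sound.

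Two points to tighten. First, you do not verify axiom~(2), $\lam_t(1)=1+t$. This requires $\psi^r(1)=1$ for all $r$, a condition not listed in Definition~\ref{df:PsiOps} but clearly intended (it is forced, for instance, if one requires the $\psi^r$ to be unital ring homomorphisms); once granted, $\lam_t(1)=\exp\bigl(\sum_{r\geq 1}(-1)^{r-1}t^r/r\bigr)=\exp(\log(1+t))=1+t$. Second, your identification $\psi^n(x)=\sum_i x_i^n$ (the power sum, with no alternating sign) is correct and consistent with Equation~(\ref{eq:LambdaFromPsi}), but it is inconsistent with the expansion convention printed in the theorem, $\psi_t=\sum_{n\geq 1}(-1)^n\psi^n t^n$: unwinding $-t\,d\log\lam_{-t}/dt$ on split elements gives $\sum_{n\geq 1}\bigl(\sum_i x_i^n\bigr)t^n$, so the $(-1)^n$ in the printed statement is a sign error (it would force $\psi^1(x)=-x$), and you were right to ignore it. Finally, the step asserting that ``the formal roots of $\psi^n(x)$ are $\{x_i^n\}$'' deserves one more sentence: it relies on the fact that the product of line elements in a $\lam$-ring is again a line element, which is a consequence of axiom~(\ref{lamring.Pn}) and should be cited explicitly rather than taken for granted.
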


It follows from the definition of
the $\psi$-operations in terms of $\lambda$-operations, from Equation (\ref{eq:PsiFromLambda}), and from the identity of Equation \eqref{lamring.Pmn}
that 
\begin{equation} 
\lam^i\circ\psi^j = \psi^j\circ\lam^i
\end{equation}
for all $i\geq 0$ and $j\geq 1$ as maps from $R\to R$.

\begin{rem} 
As in Remark 
\ref{rem:UniversalPoly},
the $k$th $\lam$-operation
$\lam^k$ corresponds to the $k$th elementary symmetric function. Equation
(\ref{eq:LambdaFromPsi}) implies that the $k$th power operation, $\psi^k$,
corresponds to the $k$th power sum symmetric function, since this equation is
nothing more than the well known
relationship between the elementary symmetric functions
and the power sums. 
\end{rem}

Let $G$ be an algebraic group acting on an algebraic space $X$. The Grothendieck ring $(K_G(X),\cdot,\kone)$ of
$G$-equivariant vector bundles on $X$ is a unital commutative ring,
where 
$\cdot$ is
the tensor product and $\kone$ 
is
 the structure sheaf $\co_X$  of $X$.

It is well known that (non-equivariant) K-theory with exterior powers is a $\lambda$-ring, and the associated 
$\psi$-ring satisfies $\psi^k(\cl) = \cl^{\otimes k}$ for all line bundles $\cl$.  A lengthy but straightforward argument shows that an equivariant version of the splitting principle holds.  One can then use the splitting principle with the fact that exterior powers (and the associated $\psi$-operations) respect $G$-equivariance to prove the following proposition.
\begin{prop}
[cf. \cite{Koc:98}, Lemma 2.4]
For any $G$-equivariant vector bundle $V$ on $X$, define $\lam^k([V])$ to be
the class  $[\Lambda^k(V)]$  of the $k$th exterior power.  This
defines a $\lam$-ring structure $(K_G(X),\cdot,\kone,\lam)$ on 
$K_G(X)$.  For any line bundle $\cl$ and any integer $k\geq 1$, the corresponding homomorphisms $\psi$ on $(K_G(X),\cdot,\kone)$
satisfy  
\begin{equation}\label{eq:AdamsOp}
\psi^k(\cl) = \cl^{\otimes k}.
\end{equation}
\end{prop}

\begin{rem}
The $\lam$-ring $K_G(X)$ has still more structure,  since any element can be
represented as a difference of vector bundles. The collection $\bE$ of classes of
vector bundles in $K_G(X)$ endows the $\lam$-ring $K_G(X)$ with a
\emph{positive structure} 
\cite{FuLa:85}. Roughly speaking, this means
that $\bE$ is a subset of the $\lam$-ring consisting of elements of
nonnegative 
rank such that any element in the ring
can be written as differences of elements in $\bE$, and for any $\cf$ of rank-$d$ in $\bE$, $\lam_t(\cf)$ is a degree $d$ polynomial in $t$, and
$\lam^d(\cf)$ is invertible (i.e.,  $\lam^d(\cf)$ is a line bundle).
Furthermore, $\bE$ is closed under addition (but not subtraction) and
multiplication; $\bE$ contains the nonnegative integers; and there are
special rank-one elements in $\bE$, namely the line bundles; and 
various other properties also hold. A positive structure on a $\lam$-ring, if it
exists, need not be uniquely determined by the $\lam$-ring structure, nor
does a general $\lam$-ring possess a positive structure.

For example, if $G = \GL_n$, then the representation ring $R(G)$ can be
identified as a subring of Weyl-group-invariant elements in the
representation ring $R(T)$, where $T$ is a maximal torus and the
$\lambda$-ring structure on $R(T)$ restricts to the usual
$\lambda$-ring structure on $R(G)$.  However, the natural set of
positive elements in $R(T)$ is generated by the characters of $T$, and
this restricts to the set of positive symmetric linear combinations of
characters which contains, but does not equal, the set of irreducible
representations of $G$.

In Section \ref{sec:positive} we will introduce a different but
related notion called a \emph{$\lam$-positive structure}, which is a natural
invariant of a $\lam$-ring. This notion which will play a central role in our
analysis of inertial K-theory.
\end{rem}

The $\lambda$- and $\psi$-ring structures behave nicely with respect to the
augmentation on equivariant K-theory (Definition \ref{df.augequivkt}).
\begin{prop} For all   $\cf$ in $K_G(X)$ and integers $n\geq 1$, we have
\begin{equation}\label{eq:AugPsi}
\aug(\psi^n(\cf)) = \psi^n(\aug(\cf)) =\aug(\cf),
\end{equation}
and
\begin{equation}\label{eq:AugLambda}
\aug(\lambda_t(\cf)) = \lambda_t(\aug(\cf)) = (1+t)^{\aug(\cf)}.
\end{equation}
\end{prop}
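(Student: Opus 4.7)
The plan is to exploit two structural facts about the augmentation: first, $\aug$ restricted to any connected component $[U/G]$ is a ring homomorphism $K_G(U)\rTo K_G(U)$ (since $\Ch^0$ is additive and multiplicative and $\co_U$ is the unit on that component), and second, $\aug$ preserves rank on each component, so it sends $\co_U$ to $\co_U$. In particular $\aug$ extends to a continuous ring endomorphism of $K_G(X)[[t]]$, and the problem decomposes over the connected components of $[X/G]$, so I may reduce to a single component $U$ and work with classes supported there.

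Next I would apply the splitting principle: it suffices to verify the claims on a formal decomposition $\cf = \cl_1 + \cdots + \cl_d$ of a rank-$d$ bundle into line bundles, and then extend by additivity of $\psi^n$ and multiplicativity of $\lam_t$, together with the fact that an arbitrary element of $K_G(X)$ is a difference of vector bundles. For the Adams operation identity, the case of a line bundle is immediate: since $\psi^n(\cl) = \cl^{\otimes n}$ and line bundles have rank one, $\aug(\psi^n(\cl)) = \co_U = \aug(\cl)$, while $\psi^n(\aug(\cl)) = \psi^n(\co_U) = \co_U^{\otimes n} = \co_U = \aug(\cl)$. Summing over the $\cl_i$ (using additivity of $\aug$ and $\psi^n$) gives $\aug(\psi^n(\cf)) = d\,\co_U = \aug(\cf)$ and $\psi^n(\aug(\cf)) = \psi^n(d\,\co_U) = d\,\co_U = \aug(\cf)$.

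For the $\lambda$ identity, again reduce to line bundles, where $\lam_t(\cl) = 1 + t\cl$, so $\aug(\lam_t(\cl)) = 1 + t\,\aug(\cl) = 1 + t\,\co_U = (1+t)^{\aug(\cl)}$, since $\aug(\cl)$ has rank one. Multiplying over the splitting gives
\[
\aug(\lam_t(\cf)) \;=\; \prod_{i=1}^d \aug(1 + t\cl_i) \;=\; \prod_{i=1}^d (1 + t) \;=\; (1+t)^d \;=\; (1+t)^{\aug(\cf)},
\]
where in the last step I use $\aug(\cf) = d\,\co_U$ on $U$. Simultaneously, using multiplicativity of $\lam_t$ on sums, $\lam_t(\aug(\cf)) = \lam_t(d\,\co_U) = \lam_t(\co_U)^d = (1+t)^d$, establishing both equalities. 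Finally, since $\psi^n$ and $\aug$ are additive and $\lam_t$ (respectively $\aug\circ\lam_t$ and $\lam_t\circ\aug$) is a group homomorphism from $(K_G(X),+)$ into the multiplicative group $1 + tK_G(X)[[t]]$, the identities, once verified on the generating positive cone of vector bundles, extend to all of $K_G(X)$.

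The only point requiring any care is the interpretation of $(1+t)^{\aug(\cf)}$ on a non-connected stack: $\aug(\cf)$ restricted to a component $U$ is an integer multiple of $\co_U$, and the exponential is computed component-by-component as $(1+t)^{\rk(\cf|_U)}$; the splitting principle then patches these together. No step is genuinely difficult — the content is bookkeeping on components combined with the standard reduction to line bundles.
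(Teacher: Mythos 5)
Your proof is correct and follows essentially the same strategy as the paper: reduce to a connected component, verify the identity on the generating cone of vector bundle classes, and extend to all of $K_G(X)$ by additivity/multiplicativity. The one place you diverge is that you prove the Adams-operation identity \eqref{eq:AugPsi} directly via the splitting principle, whereas the paper proves only the $\lambda$-identity \eqref{eq:AugLambda} (by observing that $\lambda^i$ of a rank-$d$ bundle has rank $\binom{d}{i}$, which is the same content as your formal line-bundle factorization) and then deduces \eqref{eq:AugPsi} as a formal consequence via Equation \eqref{eq:PsiFromLambda}. Your route is slightly more self-contained and makes the line-bundle case of the $\psi$-identity explicit; the paper's is slightly shorter since it proves one statement and derives the other. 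Both are fine, and the bookkeeping on components that you flag at the end is handled the same way in both.
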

\begin{proof}
Assume that $[X/G]$ is connected. Equation (\ref{eq:AugLambda}) holds if
$\cf$ is a rank $d$ $G$-equivariant vector bundle on $X$ since
$\lambda^i(\cf)$ has rank ${d \choose i}$. Since $K_G(X)$ is generated 
under addition by isomorphism classes of vector bundles, the same
equation holds for all $\cf$ in $K_G(X)$ by multiplicativity of $\lambda_t$.

If $[X/G]$ is not connected,  we have the ring isomorphism $K_G(X) =
\bigoplus_{\alpha} K_G(X_\alpha)$, where the sum is over $\alpha$ such that
$[X_\alpha/G]$ is a connected component of $[X/G]$. Equation
(\ref{eq:AugLambda}) follows from multiplicativity of $\lambda_t$.
Equation (\ref{eq:AugPsi}) follows 
from (\ref{eq:AugLambda}) and
(\ref{eq:PsiFromLambda}).
\end{proof}

This motivates the following definition.
\begin{df} \label{df.augpsiring} Let $(R,\cdot,1,\aug)$ be 
a ring with augmentation.
 $(R, \cdot, 1, \psi, \aug)$ is said to be an \emph{augmented $\psi$-ring} if
 $(R,\cdot,1,\psi)$ is a $\psi$-ring, and for all integers $n> 0$ we have 
$\epsilon \circ \psi^n = \psi^n \circ \epsilon = \epsilon$ as endomorphisms
of $R$.
If $R$ is an augmented $\psi$-ring, we define $\psi^0 :=  \aug$. 
\end{df}
\begin{rem}
The definition $\psi^0 = \aug$ is
consistent with all the conditions 
in the definition of a $\psi$-ring (Definition~\ref{df:PsiOps}).  
\end{rem}
\begin{df} \label{df.auglamring}
Let $(R, \cdot, 1, \lam)$ be a $\lam$-algebra (Definition \ref{df.lamalg}) 
over $\nq$ (respectively $\nc$). Let $\epsilon \colon R \to R$ be an
augmentation which is also a $\nq$-algebra (respectively $\nc$-algebra)
homomorphism.
We say that $(R,\cdot,1,\lam,\aug)$ is an 
\emph{augmented $\lam$-algebra over $\nq$ (respectively $\nc$)}
if $ \aug(\lambda_t(\cf)) = \lambda_t(\aug(\cf)) = (1+t)^{\aug(\cf)}$ for every $\cf \in R$.
Here the expression $(1+t)^x$ for an element $x$ of the
 $\nq$-algebra $R$ means that 
 \[
 (1+t)^x := \sum_{n=0}^\infty {x \choose n} t^n,\quad \text{ where } \quad {x \choose n} := \frac{\prod_{i=0}^{n-1}(x-i)}{n!}.
\]   
\end{df}

The previous proposition implies that ordinary equivariant K-theory is an
augmented $\psi$-ring. In fact, the equivariant Chow ring is also an 
augmented $\psi$-ring.
\begin{df}
For all $n\geq 1$, the map $\psi^n:A_G^*(X)\to A_G^*(X)$ defined by
\begin{equation}\label{eq:ChowPsi}
\psi^n(v) = n^d v
\end{equation}
for all $v$ in $A_G^d(X)$ endows $A_G^*(X)$ with the structure of a
$\psi$-ring and, therefore, $A_G^*(X)_\nq$ with the structure of a
$\lam$-ring.  The \emph{augmentation} $\aug:A_G^*(X)\to A_G^0(X)$ is the
canonical projection.
\end{df}

Associated to any $\lambda$-ring there is another (pre-$\lam$-ring) structure usually denoted by $\ga$.
These are the \emph{Grothendieck $\gamma$-classes}  $\gamma_t : R\to R[[t]]$ given by the formula
\begin{equation}\label{eq:gamma}
\gamma_t := \sum_{i = 0}^\infty  \gamma^i t^i :=\lambda_{t/(1-t)}.
\end{equation}

\begin{thm}[cf. \cite{FuLa:85}]\label{thm:chpsigamma} 
If $Y$ is a connected algebraic space with a proper action of a linear algebraic group $G$, and if, for each non-negative integer $i$, 
$\Ch^i$ is the degree-$i$ part of the Chern character and $c^i$ is the $i$th Chern class, 
then the following 
equations
  hold for all integers $n\geq 1$, $i\geq 0$
 and for all $\cf$ in $K_G(Y)$: 
\begin{equation}\label{eq:ChPsi}
\Ch^i\circ\psi^n = n^i \Ch^i,
\end{equation}
\begin{equation}\label{eq:cfromch}
c_t (\cf) = \exp\left(\sum_{n\geq 1} (-1)^{n-1} (n-1)! \Ch^n(\cf) t^n \right),
\end{equation}
and 
\begin{equation}\label{eq:ChernGamma}
c^i(\cf) = \Ch^i(\gamma^i(\cf - \aug(\cf))).
\end{equation}
\end{thm}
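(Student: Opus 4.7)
The plan is to deduce all three identities from the splitting principle for $G$-equivariant vector bundles, which allows us to pull back to a flag bundle on which the map is injective both on $K_G$ and on $A^*_G$, and on which $\cf = \cl_1 + \cdots + \cl_r$ is a formal sum of equivariant line bundles (with $r = \aug(\cf)$ the rank, since $Y$ is connected). Setting $x_j := c_1(\cl_j)$, we have $\Ch(\cl_j) = e^{x_j}$, so $\Ch^i(\cf) = \sum_j x_j^i/i!$ and $c_t(\cf) = \prod_j(1+tx_j)$.

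Equation~(\ref{eq:ChPsi}) is then immediate: additivity of $\psi^n$ together with Equation~(\ref{eq:AdamsOp}) gives $\psi^n(\cf) = \sum_j \cl_j^{\otimes n}$, so $\Ch^i(\psi^n(\cf)) = \sum_j (nx_j)^i/i! = n^i \Ch^i(\cf)$. For Equation~(\ref{eq:cfromch}), take logarithms of the splitting formula:
\[
\log c_t(\cf) = \sum_j \log(1+tx_j) = \sum_{n\geq 1}\frac{(-1)^{n-1}t^n}{n}\sum_j x_j^n = \sum_{n\geq 1}(-1)^{n-1}(n-1)!\,t^n\,\Ch^n(\cf),
\]
using $\sum_j x_j^n = n!\,\Ch^n(\cf)$; exponentiating yields the claim.

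For Equation~(\ref{eq:ChernGamma}), the core computation is for a single line bundle. From $\lam_s(\cl) = 1 + s\cl$ and $\lam_s(\kone) = 1 + s$, together with additivity of $\lam_t$, one finds
\[
\gamma_t(\cl - \kone) = \frac{\lam_{t/(1-t)}(\cl)}{\lam_{t/(1-t)}(\kone)} = \frac{(1-t)+t\cl}{1} = 1 + t(\cl - \kone).
\]
Multiplicativity in the additive argument then gives $\gamma_t(\cf - \aug(\cf)) = \prod_j \bigl(1 + t(\cl_j - \kone)\bigr)$, so $\gamma^i(\cf - \aug(\cf))$ equals the elementary symmetric polynomial $e_i(\cl_1 - \kone,\ldots,\cl_r - \kone)$. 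Since each $\Ch(\cl_j - \kone) = e^{x_j} - 1$ has $x_j$ as its degree-one leading term, a product of $i$ such factors has lowest-degree part equal to the product of the corresponding $x_j$'s. Hence the degree-$i$ component of $\Ch\bigl(e_i(\cl_1 - \kone,\ldots,\cl_r - \kone)\bigr)$ is exactly $e_i(x_1,\ldots,x_r) = c^i(\cf)$, proving the identity.

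The principal technical point is the equivariant splitting principle itself; this is standard (see \cite{FuLa:85}) and relies on the resolution property of $\ix$ which is part of our standing hypotheses. Once this reduction is in hand, each of the three equations reduces to a direct manipulation of formal power series and symmetric functions.
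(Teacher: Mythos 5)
The paper gives no proof here---Theorem~\ref{thm:chpsigamma} is simply cited from Fulton--Lang---so there is nothing to compare against; your argument is the standard one and is correct. The splitting-principle reduction is the right tool, and the computations for Equations~\eqref{eq:ChPsi} and~\eqref{eq:cfromch} (logarithmic derivative and the power-sum/elementary-symmetric translation) and for Equation~\eqref{eq:ChernGamma} (the identity $\gamma_t(\cl-\kone)=1+t(\cl-\kone)$ followed by lowest-degree extraction) are all accurate.

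One small point worth tightening: you write $\cf=\cl_1+\cdots+\cl_r$ with $r=\aug(\cf)$, which tacitly assumes $\cf$ is the class of an actual vector bundle, whereas the theorem is asserted for arbitrary $\cf\in K_G(Y)$. For \eqref{eq:ChPsi} this is harmless since both sides are additive in $\cf$. For \eqref{eq:cfromch} and \eqref{eq:ChernGamma} the right formulation is to split $\cf$ as a formal $\nz$-linear combination $\sum_j a_j\cl_j$ (after pulling back to a flag bundle of $E\oplus F$ where $\cf=[E]-[F]$), i.e.\ to allow Chern roots with negative multiplicities. Both $c_t$ and $\cg\mapsto\Ch(\gamma_t(\cg))$ are exponential in their argument, so the same symmetric-function computations go through: $\Ch(\gamma_t(\cf-\aug\cf))=\prod_j(1+tu_j)^{a_j}$ with $u_j=e^{x_j}-1$, the coefficient of $t^i$ has Chow degree $\ge i$, and its degree-$i$ part is obtained by $u_j\mapsto x_j$, giving the coefficient of $t^i$ in $\prod_j(1+tx_j)^{a_j}=c_t(\cf)$. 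Your ``lowest-degree part'' argument is exactly this, just stated only for the positive case.
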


\begin{rem}
Equation (\ref{eq:ChPsi})  is precisely the statement that the Chern
character $\Ch:K_G(X)_\nq\to A_G^*(X)_\nq$ is a homomorphism of $\psi$-rings
and
therefore $\lam$-rings.  
\end{rem}
In order to define inertial Chern classes and the inertial $\lam$-ring and $\psi$-ring structures, we will need the so-called \emph{Bott cannibalistic classes}.
 
\begin{df}
Let $Y$ be an algebraic space with a proper action of a linear algebraic group
$G$. Denote by $K^+_G(Y)$ the semigroup of 
classes of $G$-equivariant
vector bundles on $Y$.

For each $j\geq 1$, the \emph{$j$th Bott (cannibalistic) class
$\theta^j:K^+_G(Y)\to K_G(Y)$
} is the multiplicative class defined for any line bundle $\cl$ by 
\begin{equation}
\theta^j(\cl) = \frac{1-\cl^j}{1-\cl} = \sum_{i=0}^{j-1} \cl^i.
\end{equation}
By the splitting principle, we can extend the
definition of  $\theta^j(\cf)$ to all $\cf$ in $K^+_G(Y)$. 
\end{df}

\begin{df}
Let $\augi_{Y}$ denote the kernel of the augmentation $\aug:K_G(Y) \to K_G(Y)$.  
It is an ideal in the ring $(K_G(Y), \cdot)$, where $\cdot$ denotes the usual tensor product; and $\augi$ defines a topology on $K_G(Y)$.  We denote the completion 
of $K_G(Y)_\nq$
with respect to that topology by $\Kh_G(Y)_\nq$.
\end{df}
\begin{rem} \label{rem.bottclass} 
  We will need to define Bott classes on elements of integral rank in
  rational K-theory.  This can be done in a straightforward manner, but the
  resulting class will live in the augmentation completion of rational
  K-theory. Stated precisely, if $\cl$ is a line bundle, then we can expand
  the power sum for $\psi^{j}(\cl)$
as $\psi^{j}(\cl) = j(1 + a_1(\cl-1) + \ldots +
  a_{j-1}(\cl-1)^{j-1})$ for some rational numbers $a_1, \ldots ,
  a_{j-1}$. Since $(\cl -1)$ lies in the augmentation ideal, any
  fractional power of the expression $1 + a_1(\cl-1) + \ldots +
  a_{j-1}(\cl-1)^{j-1}$ can be expanded using the binomial formula as
  an element of $\Kh_G(Y)_{\nq}$. It follows that if $\alpha =
  \sum_{i}q_i\cl_i$ with $\sum_i q_i \in \nz$, then the binomial
  expansion of the expression $j^{\sum_i q_i} \prod_i (1 + a_1(\cl_i
  -1) + \ldots + a_{j-1}(\cl-1)^{j-1})^{q_i}$ defines $\theta^j(\alpha)$ as
  an element of $\Kh_G(Y)_\nq$.
\end{rem}

We will also need the following result.

\begin{thm}[The Adams-Riemann-Roch Theorem for Equivariant Regular Embeddings \cite{Koc:91,Koc:98}]\label{thm:ARR}
Let $\iota :Y\rInto X$ be a $G$-equivariant closed regular embedding of smooth manifolds. The following  commutes for all integers $n \geq 1$:
\begin{equation}\label{eq:ARR}
\begin{diagram}
K_G(Y) & \rTo^{\theta^n(N_\iota^*) \psi^n}& K_G(Y) \\
\dTo^{\iota_*} && \dTo^{\iota_*} \\
K_G(X) & \rTo^{\psi^n} & K_G(X),
\end{diagram}
\end{equation}
where $N_\iota^*$ is the conormal bundle of the embedding $\iota$.
\end{thm}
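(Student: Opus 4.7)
The plan is to reduce to the case of the zero section of a vector bundle via deformation to the normal cone, and then verify the formula in that case by an explicit computation using the Koszul resolution and the splitting principle. This is essentially K\"ock's strategy.

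\emph{Step 1: Reduction to the zero section.} Let $M = M_{Y/X}$ denote the deformation to the normal cone, a $G$-equivariant flat family over $\A^1$ whose generic fiber is $X$ and whose special fiber (over $0$) is $N_\iota$, the total space of the normal bundle. The embedding $\iota$ and the zero section $s\colon Y\rInto N_\iota$ both extend to a single $G$-equivariant regular embedding $\tilde\iota\colon Y\times \A^1 \rInto M$ whose conormal bundle restricts on each fiber to $N_\iota^*$. Since pullback to the fibers commutes with $\iota_*$, $\psi^n$, and with the formation of $\theta^n(N_\iota^*)$, the commutativity of diagram~(\ref{eq:ARR}) for $\tilde\iota$ implies it for both $\iota$ and $s$. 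By homotopy invariance $K_G(M)\irightarrow K_G(Y\times \A^1) \irightarrow K_G(Y)$ (on one side) and $K_G(M) \rTo K_G(N_\iota)$ (on the other), together with the zero-section case, will then give the result for $\iota$.

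\emph{Step 2: The zero section of a vector bundle.} Let $\pi\colon N\rTo Y$ be a $G$-equivariant vector bundle and $s\colon Y\rInto N$ the zero section. The Koszul complex gives $s_*(\one) = \lambda_{-1}(\pi^* N^*)$ in $K_G(N)$. By the splitting principle, write $N^* = L_1 + \cdots + L_r$ formally. Using $\psi^n(L_i) = L_i^n$ and multiplicativity,
\[
\psi^n(s_*(\one)) = \psi^n\Bigl(\prod_i (1-\pi^* L_i)\Bigr) = \prod_i (1-\pi^* L_i^n).
\]
On the other hand, the projection formula yields
\[
s_*(\theta^n(N^*)\cdot\psi^n(\one)) = s_*(\theta^n(N^*)) = \theta^n(\pi^* N^*)\cdot s_*(\one) = \prod_i \frac{1-\pi^* L_i^n}{1-\pi^* L_i}\prod_i (1-\pi^* L_i),
\]
which equals $\prod_i(1-\pi^* L_i^n)$. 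Thus the two sides agree for $\alpha = \one$.

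\emph{Step 3: Arbitrary $\alpha$.} For general $\alpha\in K_G(Y)$, homotopy invariance gives $\alpha = s^*\pi^*\alpha$, so by the projection formula and multiplicativity of $\psi^n$,
\[
\psi^n(s_*\alpha) = \psi^n(\pi^*\alpha\cdot s_*\one) = \pi^*\psi^n(\alpha)\cdot\psi^n(s_*\one) = \pi^*\psi^n(\alpha)\cdot s_*\theta^n(N^*) = s_*(\theta^n(N^*)\cdot\psi^n(\alpha)),
\]
which is the desired identity in the zero-section case. Combined with Step~1, this completes the proof.

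The main obstacle is Step~1: constructing the deformation to the normal cone equivariantly for a general linear algebraic group $G$ and verifying that both Adams operations and Bott classes commute with the specialization maps to the two fibers. This is where K\"ock's analysis is essential; once one has the equivariant deformation diagram with the requisite functoriality of $\psi^n$ and $\theta^n$, the zero-section computation in Steps~2--3 is essentially formal.
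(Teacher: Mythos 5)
The paper does not prove this theorem; it is quoted verbatim from K\"ock \cite{Koc:91,Koc:98}, so there is no ``paper's own proof'' to compare against. Judged on its own merits, your proposal is the standard deformation-to-the-normal-cone strategy, which is indeed how one proves Adams--Riemann--Roch for regular closed embeddings (going back to SGA~6; K\"ock's contribution is the equivariant and higher-$K$-theoretic adaptation). Steps~2 and~3 are correct and complete: the Koszul computation $s_*\one = \lambda_{-1}(\pi^*N^*)$, the splitting-principle identity $\psi^n\lambda_{-1}(\pi^*N^*) = \theta^n(\pi^*N^*)\lambda_{-1}(\pi^*N^*)$, and the projection-formula reduction of a general class to $\one$ are all exactly right.

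The gap is in Step~1, and it is more than just ``the hard technical part.'' You assert that homotopy invariance gives $K_G(M)\irightarrow K_G(Y\times\A^1)$; this is false as stated, since the deformation space $M=M_{Y/X}$ is not an affine (or vector) bundle over $Y\times\A^1$ --- its fibers over $\A^1$ are $X$ (for $t\neq 0$) and $N_\iota$ (for $t=0$), and in general there is no retraction of $M$ onto $Y\times\A^1$. The correct mechanism is not homotopy invariance of $M$ but rather a specialization/localization argument: one uses the $G$-equivariant localization sequence for the open inclusion $X\times\Gm\subset M$ with closed complement $N_\iota$, together with homotopy invariance of $K_G(X\times\A^1)\to K_G(X)$, to transfer the identity from the zero-section fiber to the generic fiber. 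One must also check that the specialization maps commute with $\iota_*$ and $\psi^n$ and send $\theta^n$ of the ambient conormal bundle to $\theta^n$ of the fiberwise conormal bundle. This is precisely the content that must be supplied, and K\"ock's papers devote real effort to it in the equivariant higher-$K$-theory setting. Since you flag Step~1 as the obstacle, you are aware something is missing; the specific misstep is invoking homotopy invariance of $K_G(M)$ where none exists.
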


\section{Augmentation ideals and completions of inertial K-theory}
We will use the
Bott classes of $\cs$ to define inertial $\lam$- and
$\psi$-ring structures as well as
 inertial Chern classes.  Since $\cs$ is
generally not integral, we will often need to work in the augmentation
completion 
$\Kh_G(\IGX)_\nq$ of $K_G(\IGX)_{\nq}$.
However, it is not \emph{a priori} clear that the
inertial product behaves well with respect to this completion, since the
topology involved is constructed by taking classical powers of the classical
augmentation ideal instead of inertial powers of the inertial augmentation
ideal.  The surprising result of this section is that 
when $G$ is diagonalizable, 
these two completions
are the same. 

\begin{df}
  Given any inertial pair $(\R,\cs)$, define $\augi_{\cs}$ to be the
  kernel of the inertial augmentation $\ovaug: K_G(\IGX) \to
  K_G(\IGX)$. It is an ideal with respect to the inertial product
  $\star := \star_{\R}$.  Define 
$\augi_{I\ix}$
to be the kernel of the
  classical augmentation $\aug:K_G(\IGX) \to K_G(\IGX)$.  It is an
  ideal of $K_G(\IGX)$
with respect to  the 
usual tensor product  
instead of the inertial product.
\end{df}

Each of these two ideals induces a topology on $K_G(\IGX)$,
and we also consider a third topology induced by the augmentation
ideal $\augi_{\BG}$ of $R(G)$.  By \cite[Theorem 6.1a]{EdGr:00} the
$\augi_{\BG}$-adic and $\augi_{I\ix}$-adic topologies on
$K_G(\IGX)$ 
are the same.  In this section we will show that the
$\augi_{\cs}$-adic topology agrees with the other two.

\begin{lm}\label{lm:RG-alg}
If $(\R, \cs)$ is an inertial pair, then $(K_G(\IGX), \star_{\R})$ is an $R(G)$-algebra.  
Moreover,  for any $x \in R(G)$, if $\beta_{\Psi} \in K_G(I(\Psi))$, we have
$x \beta_{\Psi} = x \one \star_\R \beta_{\Psi}$. 
\end{lm}
\begin{proof}
By definition of an inertial pair, if $\alpha_{1} \in K_G(X)$ is supported in 
the untwisted sector, then $\alpha_{1} \star_{\R}
 \beta_{\Psi} = f_\Psi^* \alpha \cdot \beta$, where $f_\Psi \colon I(\Psi) \to X$ is the projection.
The lemma now follows from the projection formula for equivariant K-theory.
\end{proof}

\begin{thm} \label{thm.completions} 
When $G$ is diagonalizable
the $\augi_{\BG}$-adic,
  $\augi_{\IGX}$-adic, and $\augi_{\cs}$-adic topologies on
  $K_G(\IGX)$
are all equivalent. In particular, the
$\augi_{\BG}$-adic, the $\augi_{\IGX}$-adic, and the
$\augi_{\cs}$-adic completions of $K_G(\IGX)_\nq$ are equal.
\end{thm}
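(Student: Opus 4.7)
The plan is as follows. By the cited result \cite[Theorem 6.1a]{EdGr:00}, the $\augi_{\BG}$-adic and $\augi_{\IGX}$-adic topologies on $K_G(\IGX)_\nq$ already coincide, so it suffices to compare the $\augi_{\cs}$-adic topology with the $\augi_{\BG}$-adic one. For the easy direction, I would first note that $\augi_{\BG}\cdot K_G(\IGX) \subseteq \augi_{\cs}$: for $r\in\augi_{\BG}$ and $\cf\in K_G(\IGX)$, multiplicativity of $\inCh$ together with $\inCh^0(r)=0$ yields $\ovaug(r\cdot\cf)=0$. Combined with the preceding lemma, which identifies the $R(G)$-action through $\cdot$ with the $R(G)$-action through $\star$, iteration gives
\[
\augi_{\BG}^n\cdot K_G(\IGX) \;=\; \augi_{\BG}^n\star K_G(\IGX) \;\subseteq\; \augi_{\cs}^{\star n}
\]
for every $n$, so the $\augi_{\cs}$-adic topology is at least as fine as the $\augi_{\BG}$-adic one.

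For the reverse refinement, the key tool is the inertial Chern character $\inCh = \Ch\cdot\Td(-\cs)$, which is a ring homomorphism from $(K_G(\IGX)_\nq,\star)$ to $(A^*_G(\IGX)_\nq,\star)$. Since $\ovaug$ is precisely the $\cs$-degree zero component of $\inCh$, any $\alpha\in\augi_{\cs}$ satisfies $\inCh(\alpha)\in \bigoplus_{q>0} A_G^{\bra{q}}(\IGX)_\nq$. Because the inertial product on $A^*_G(\IGX)_\nq$ is additive in $\cs$-degree, $\inCh(\augi_{\cs}^{\star n})$ is concentrated in $\cs$-degrees at least $n\delta$, where $\delta>0$ is the minimum strictly positive $\cs$-degree appearing. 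Since $\IGX$ is a smooth Deligne-Mumford stack of finite type with finite stabilizer, $A^*_G(\IGX)_\nq$ is bounded in Chow (and therefore in $\cs$-)degree by some $D<\infty$, so $\inCh(\augi_{\cs}^{\star n})=0$ as soon as $n>D/\delta$.

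To close the argument, I would invoke the Riemann-Roch theorem for Deligne-Mumford stacks, alluded to in the introduction, which in this equivariant setting asserts that $\Ch$ induces an isomorphism $\Kh_G(\IGX)_\nq \cong A^*_G(\IGX)_\nq$ from the $\augi_{\BG}$-adic completion onto the rational equivariant Chow ring. Since $\Td(-\cs)$ is invertible, $\ker(\inCh)=\ker(\Ch)$, and this isomorphism identifies the kernel with the kernel of the completion map, namely $\bigcap_k\augi_{\BG}^k K_G(\IGX)_\nq$. Consequently $\augi_{\cs}^{\star n}\subseteq \bigcap_k\augi_{\BG}^k K_G(\IGX)_\nq \subseteq \augi_{\BG}^k K_G(\IGX)_\nq$ for every $k$, with a uniform (in fact constant) choice of $n>D/\delta$, which combined with the easy direction forces all three topologies to agree. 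The main obstacle is this final identification $\ker(\Ch)=\bigcap_k\augi_{\BG}^k K_G(\IGX)_\nq$; everything else is a formal consequence of the $\cs$-grading on $A^*_G(\IGX)_\nq$, the ring homomorphism property of $\inCh$, and the $R(G)$-centrality from the preceding lemma.
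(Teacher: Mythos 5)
Your argument is correct and takes a genuinely different route from the paper's. Both proofs dispatch the easy inclusion $\augi_{\BG}^n K_G(\IGX)\subseteq\augi_\cs^{\star n}$ using the $R(G)$-algebra structure from the preceding lemma. For the reverse inclusion, the paper restricts to diagonalizable $G$ (stating the general case is ``similar but notationally more complicated''), decomposes $\augi_\cs$ sector-by-sector according to whether the $\cs$-age vanishes, and proves a combinatorial lemma --- any $(s+1)$-fold $\star$-product of elements of $\augi_\cs$, where $s$ is controlled by the finite subgroup of $G$ with nonempty fixed locus, lies in the classical augmentation ideal $\augi_{\IGX}$ --- then iterates via the equivalence of the $\augi_{\BG}$- and $\augi_{\Itwo_{\ix}}$-adic topologies on $K_G(\Itwo_G X)$. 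You instead exploit the $\cs$-grading on $A^*_G(\IGX)_\nq$: since $\inCh$ is a $\star$-ring homomorphism sending $\augi_\cs$ into strictly positive $\cs$-degree, and the inertial Chow product is additive in $\cs$-degree while the grading is bounded, some single fixed power $\augi_\cs^{\star n_0}$ lies in $\ker(\inCh)=\ker(\Ch)$; you then invoke the Edidin--Graham Riemann--Roch isomorphism $\Kh_G(\IGX)_\nq\cong A^*_G(\IGX)_\nq$ (plus the observation, again by boundedness, that $\Ch$ factors through the $\augi_{\BG}$-adic completion) to identify $\ker(\Ch)$ with $\bigcap_k\augi_{\BG}^k K_G(\IGX)_\nq$. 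Your route is more conceptual, handles arbitrary $G$ uniformly rather than only the diagonalizable case, and yields the sharper conclusion that the $\augi_\cs$-adic filtration stabilizes at a fixed step; the paper's is more elementary, needing Edidin--Graham only for the equivalence of the $\augi_{\BG}$- and $\augi_Y$-adic topologies rather than the full Riemann--Roch isomorphism --- which is indeed, as you flag, the one substantive input your argument requires.
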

\begin{proof}
To prove that the topologies are equivalent we must show the following:
\begin{enumerate}
\item \label{cond.ClassInInert} For each positive integer $n$ there is a positive integer $r$, such that\\ $\augi^{\otimes r}_{\BG} K_G(\IGX) \subseteq 
(\augi_{\cs})^{\star n}$.

\item \label{cond.InertInClass} For each positive integer $n$ there is a positive integer $r$, such that\\ $(\augi_{\cs})^{\star r} \subseteq
\augi^{\otimes n}_{\BG}K_G(\IGX)_\nq$.
\end{enumerate}

Condition~(\ref{cond.ClassInInert}) follows Lemma~\ref{lm:RG-alg}  and the observation that $\augi_{\BG} K_G(\IGX)
\subset \augi_{\cs}$. In particular, we may take $r = n$.

Condition~(\ref{cond.InertInClass}) is more difficult to check. 
Given a $G$-space $Y$, we denote by $\augi_Y$ the subgroup of $K_G(Y)$
of elements of rank 0. This is an ideal with respect to the tensor product.

For each connected component $[U/G]$ of $[\IGX/G]$, the inertial augmentation satisfies $\res{\inCh_0(\alpha)}{U} = 0$ if $\age_{{\cs}}(U) >0$ and  $\res{\inCh_0(\alpha)}{U} = \res{\Ch_0(\alpha)}{U}$ if $\age_{{\cs}}(U) =0$ 
\cite[Thm 2.3.9]{EJK:12a}.
So $\augi_{\cs}$ has the following decomposition as an Abelian group 
$$\augi_{\cs} = \bigoplus_{\substack{U: \\ \age_{{\cs}}(U) = 0}} \augi_{U} 
\oplus \bigoplus_{\substack{U: \\ \age_{{\cs}}(U)>0}} K_G(U).$$

\begin{lm} \label{lem.mm-1}
If $m \in G$ with $\alpha \in K_G(X^m) \cap \augi_{{\cs}}$, 
and $\beta \in K_G(X^{m^{-1}}) \cap \augi_{{\cs}}$, then
$\alpha \star \beta \in \augi_{I\ix}$.
\end{lm}
\begin{proof}
Since $m m^{-1} = 1$, we have $\alpha \star \beta \in K_G(X^1) \subset K_G(\IGX)$,
so we must show $\alpha \star \beta \in \augi_X$.
If $\age_{{\cs}}(X^m) = 0$, then 
$\alpha_{m} \in \augi_{X^m}$, so the inertial product
$$\mu_*( e_1^*\alpha \cdot e_2^* \beta \cdot \euler(\R))$$
would automatically be in $\augi_{X}$ because the finite pushforward
$\mu_*$ preserves the classical augmentation ideal.
Thus we may assume that $\age_{\cs}(X^m)$ and $\age_{\cs}(X^{m^{-1}})$
are both nonzero and that $\alpha$ and $\beta$ have nonzero rank as elements of
$K_G(X^m)$ and $K_G(X^{m^{-1}})$, respectively. If the fixed locus
$X^{m,m^{-1}}$ has positive codimension, then 
$\mu_*\left(K_G(X^{m,m^{-1}})\right) \subset  K_G(X^1)$ is also in the classical
 augmentation ideal, since it consists of classes supported on subspaces of positive codimension. On the other hand, if $X^{m,m^{-1}} = X$, then $T_\mu|_{X^{m,m^{-1}}} = 0$.
By
definition of an inertial pair, ${\cs}|_{X^1} = 0$, so 
$\R|_{X^{m,m^{-1}}}  =  (e^*_1{\cs} + e_2^*{\cs})|_{X^{m,m^{-1}}}$ is a nonzero vector bundle.
It follows that $\euler(\R|_{X^{m,m^{-1}}}) \in \augi_{X^{m,m^{-1}}}$,
and once again
$\alpha \star \beta \in \augi_{X}$.
\end{proof}

Since $G$ is diagonalizable and acts with finite stabilizer on $X$,
 there is a finite Abelian subgroup $H \subset G$
such that $X^g = \emptyset$ for all $g \notin H$. 
Let $s = \sum_{h \in H} \left(\ord(h)-1\right)$.
\begin{lm}
The $(s+1)$-fold inertial product
$(\augi_{\cs})^{\star (s+1)}$ is contained in $\augi_{\IGX}$.
\end{lm}
\begin{proof}
By the definition of $s$, any list $m_1, \ldots , m_{s+1}$
of non-identity elements of $H$ contains at least one $h$
with multiplicity at least $\ord(h)$.
It follows that such a list 
contains subsets $m_1, \ldots , m_k$ and $m_{k+1}, \ldots , m_{l}$
with $m_1 \ldots m_k = (m_{k+1} \ldots m_l)^{-1}$.  

Since the inertial product is commutative, we may write any product of the form
$\alpha_{m_1} \star \cdots \star \alpha_{m_{s+1}}$ 
with $\alpha_{m_i} \in K_G(X^{m_i})$ 
as 
$\tilde{\alpha}_{m} \star \tilde{\beta}_{m^{-1}} \star \tilde{\gamma}_{m'} $ for some
$\tilde{\alpha}_m \in K_G(X^m)$, $\tilde{\beta}_m \in K_G(X^{m^{-1}})$, and $
\tilde{\gamma}_{m'} \in K_G(X^{m'})$. Lemma~\ref{lem.mm-1} now gives the result.
\end{proof}

To complete the proof of Theorem~\ref{thm.completions}, observe first that we may use the equivalence of the $\augi_{\BG}$-adic and the $\augi_{\Itwo_\ix}$-adic topologies in the ring $(K_G(\Itwo_G X), \otimes)$ to see that for any $n$ there is an $r$ such that 
$\augi_{\Itwo_GX}^{\otimes r} \subset \augi_{\BG}^{\otimes n} K_G(\Itwo_GX)$. This implies that $\mu_*(\augi_{\Itwo_GX}^{\otimes r}) \subset \augi_{\BG}^{\otimes n} K_G(\IGX)$. It follows that $\augi_{\cs}^{\star(r(s+1))} \subset \augi_{\BG}^{\otimes n} K_G(\IGX)$.
\end{proof}

Since the three topologies are the same
we will not distinguish between them from now on, and will use the term 
\emph{augmentation completion} to denote the completion with respect any one of these augmentation ideals. 
The completion of $K_G(\IGX)_\nq$ will be denoted by $\Kh_G(\IGX)_\nq$. Note
that this completion is a summand in 
$K_G(\IGX)_\nq$ \cite[Proposition 3.6]{EdGr:05}.

\section{Inertial Chern classes and power operations}\label{sec.inertialpsi}

In this section we show that for each 
Gorenstein inertial pair
$({\R}, {\cs})$ and corresponding Chern Character
$\ovCh$, we can define inertial Chern classes. 
When $({\R},{\cs})$ is strongly Gorenstein, there 
are also $\psi$-operations, $\lam$-operations, and $\gamma$-operations 
on the corresponding inertial K-theory 
$K_G(\IGX)$. These operations 
behave nicely with respect to the inertial Chern
character and satisfy many relations, including an analog of
Theorem~\ref{thm:chpsigamma}.
When $G$ is diagonalizable these operations make the inertial 
K-theory ring  $K_G(\IGX)$ into a $\psi$-ring and 
$K_G(\IGX)\otimes\nq$ into 
a 
$\lambda$-ring.

\subsection{Inertial Adams (power) operations and inertial Chern classes}
\label{sec:oAdams}  

We begin by defining inertial Chern classes.
We then define inertial  Adams operations associated to a 
strongly Gorenstein pair $({\R}, {\cs})$ and show that, for a diagonalizable group $G$, the corresponding rings are
$\psi$-rings with many other nice properties.

\begin{df}\label{df.ochernclass}
For any Gorenstein inertial pair $({\R}, {\cs})$ 
the \emph{${\cs}$-inertial Chern series} $\ocv_t:K_G(\IGX)\to
A^*_G(\IGX)_\nq[[t]]$ is defined, for all $\cf$ in $K_G(\IGX)$, by
\begin{equation}\label{eq:OChernSeries} 
  \ocv_t(\cf) = \ovexp\left( \sum_{ n\geq 1} (-1)^{n-1} (n-1)! \ovCh^n(\cf) t^n \right),
\end{equation}
where the power series $\ovexp$ is defined with respect to the
$\star_{\R}$ product, and $\ovCh^n(\cf)$ is the component of
$\ovCh(\cf)$ in $A^*(\IGX)$ with $\cs$-age equal to $n$. For all $i\geq 0$,
the \emph{$i$th ${\cs}$-inertial Chern class $\ovc^i(\cf)$ of $\cf$} is
the coefficient of $t^i$ 
in $\ocv_t(\cf)$.
\end{df}

\begin{rem}
The definition of inertial Chern classes could be extended to the
non-Gorenstein case by introducing fractionally graded $\cs$-inertial Chern
classes, but the latter does not behave nicely with respect to the inertial
$\psi$-structures.
\end{rem}

\begin{df}\label{df.opsi}
Let $({\R}, {\cs})$ be a strongly Gorenstein inertial pair.
For all integers $j\geq 1$, we define the 
\emph{$j$th inertial Adams (or power) 
 operation}
$\opsi^j:K_G(\IGX)\to     K_G(\IGX)$ by the formula
\begin{equation}\label{eq:def-psi-tilde}
\opsi^j(\cf) := 
\psi^j(\cf)\cdot\theta^j({\cs}^*)
\end{equation}
for all $\cf$ in $K_{G}(\IGX)$. (Here $\cdot$ is the ordinary tensor product on
$K_G(\IGX)$.)
\end{df}
We show in Theorem~\ref{thm:OrbifoldPsiRing} that, in many cases, these inertial Adams operations define a $\psi$-ring structure on $(K_G(\IGX), \star_{\R})$.

\begin{rem} \label{rem.psiGor} If $({\R}, {\cs})$ is Gorenstein, then $\cs$ has 
  integral rank and
 $\theta^j({\cs}^*)$ may be
defined as an element of the completion $\Kh_G(\IGX)_\nq$ (see Remark \ref{rem.bottclass}).
Thus we can still define inertial Adams
  operations as maps $\opsi^j \colon K_G(\IGX) \to
  \Kh_G(\IGX)_{\nq}$.
\end{rem} 

\begin{df}\label{df.olam}
 Let $({\R}, {\cs})$ be a strongly Gorenstein inertial pair. 
We define   
$\olam_t:K_G(\IGX) \to K_G(\IGX)_\nq[[t]]$ 
by Equation
  (\ref{eq:LambdaFromPsi}) after replacing 
$\psi$, $\lam$, and $\exp$ by their 
             respective inertial analogs $\opsi$, $\olam$, and $\oexp$:
\begin{equation}\label{eq:oLambdaFromoPsi}
\olam_t = \oexp\left(\sum_{r\geq 1} (-1)^{r-1} \opsi^r \frac{t^r}{r}\right).
\end{equation} 
Define $\olam^i$ to be the coefficient of $t^i$ in $\olam_t$.
We call $\olam^i$ the \emph{$i$th inertial $\lam$ operation}. 
\end{df}

We now prove a relation between  
inertial  Chern classes,
the inertial  Chern character, and  
inertial  Adams operations, but first we need two lemmas connecting the classical Chern
character, Adams operations, Bott classes, and Todd classes.

\begin{lm}
Let $\cf \in K_G(\IGX)$
be the class of a $G$-equivariant vector bundle on $\IGX$.
For all integers $n\geq 1$, we have the equality
in $A^*_G(\IGX)$:
\begin{equation}\label{eq:ChThetaToddIdentity}
\Ch(\theta^n(\cf^*)) \Td(-\cf) = n^{\Ch^0(\cf)} \Td(-\psi^n(\cf)).
\end{equation}
More generally, if $\cf \in K_G(\IGX)_\nq$ is such that $\cf = \sum_{i=1}^k
\alpha_i \cv_i$, where $\cv_i$ is a vector bundle, $\alpha_i\in\nq$ with
$\alpha_i > 0$ for all $i=1,\ldots,k$, and 
$\Ch^0(\cf) \in \nz^\ell \subset A_G^0(\IGX)_\nq$, 
($\ell$ is the number of connected components of $[\IGX/G]$),
then Equation~(\ref{eq:ChThetaToddIdentity}) still holds in 
$A_G^*(\IGX)_\nq$, where $\theta^n(\cf^*)$ is an element in the
completion $\Kh_G(\IGX)_\nq$. 
\end{lm}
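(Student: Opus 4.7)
The plan is to prove the identity by the splitting principle, reducing first to the case of a single line bundle where both sides can be computed in closed form, and then passing to the rational setting via multiplicativity and the binomial extension of Proposition~\ref{prop.bottextend}.

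First, I would verify the identity on a line bundle $L$ with first Chern class $\xi$. Since $\theta^n(L^*) = 1 + L^{-1} + \cdots + L^{-(n-1)}$, we have
\[
\Ch(\theta^n(L^*)) = \sum_{j=0}^{n-1} e^{-j\xi} = \frac{1-e^{-n\xi}}{1-e^{-\xi}}.
\]
Because $\Td(-L) = \Td(L)^{-1} = (1-e^{-\xi})/\xi$, the left-hand side equals $(1-e^{-n\xi})/\xi$. On the other hand, $\psi^n(L) = L^{\otimes n}$ has first Chern class $n\xi$, so $\Td(-\psi^n(L)) = (1-e^{-n\xi})/(n\xi)$, and multiplying by $n^{\Ch^0(L)} = n$ yields exactly $(1-e^{-n\xi})/\xi$. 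Thus \eqref{eq:ChThetaToddIdentity} holds on line bundles.

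Second, I would extend to an arbitrary $G$-equivariant vector bundle $\cf$ of rank $d$ on a connected component by the splitting principle: formally write $\cf = L_1 + \cdots + L_d$ as a sum of line bundles. The Bott class $\theta^n$ is multiplicative by definition, the Todd class is multiplicative on $K$-theory, the Chern character is a ring homomorphism, and $\psi^n$ is additive with $\psi^n(L_i) = L_i^{\otimes n}$. Consequently both sides of \eqref{eq:ChThetaToddIdentity} factor as products over $i$ of the corresponding identities for the $L_i$, and collecting the $n = n^{\Ch^0(L_i)}$ factors produces the overall multiplier $n^d = n^{\Ch^0(\cf)}$. Handling the general (possibly disconnected) case amounts to restricting the identity to each component of $\IGX$, where $\Ch^0(\cf)$ is the locally constant integer rank.

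Finally, for the rational case $\cf = \sum_i \alpha_i \cv_i$ with $\alpha_i \in \nq_{>0}$ and $\Ch^0(\cf) \in \nz^\ell$, the Bott class $\theta^n(\cf^*)$ is interpreted via the binomial extension in the augmentation completion $\Kh_G(\IGX)_\nq$ from Proposition~\ref{prop.bottextend}. Both sides are multiplicative expressions that depend on $\cf$ only through its rational decomposition, so the vector-bundle identity propagates to rational positive combinations; the hypothesis that $\Ch^0(\cf)$ is integral on each component ensures that the factor $n^{\Ch^0(\cf)}$ is an ordinary (componentwise) integer and that no fractional powers intrude. The main point to check is that the two extensions — via binomial expansion on the left and via $n \cdot \Td(-\psi^n(\cf))$ on the right — agree as formal power series in the augmentation ideal, which follows from the multiplicativity on both sides. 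The principal subtlety, and the only place where care is really needed, is this last step of reconciling the two completions; the splitting-principle computation itself is essentially a one-line check.
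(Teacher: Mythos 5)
Your proof takes essentially the same route as the paper: verify the identity for a line bundle by a direct Todd/Chern-character computation, then extend to vector bundles by the splitting principle and multiplicativity of $\theta^n$, $\Td$, and $\Ch$, and finally handle the rational case by multiplicativity and the binomial extension of the Bott class into $\Kh_G(\IGX)_\nq$. The paper phrases the rational step slightly differently (it notes that $\Ch$ and $\Td$ factor through the completion, and that $\Ch^0(\theta^j(\cf) - j^{\aug(\cf)}) = 0$), but this is the same idea as your observation that both sides propagate multiplicatively from the vector-bundle case, so the two proofs coincide in substance.
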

\begin{proof}
Let $\cl$ in $K_G(\IGX)$ be a line bundle with 
ordinary
first Chern class $c := c^1(\cl)$.
For all $n\geq 1$ we have
\begin{eqnarray*}
\Ch(\theta^n(\cl^*)) \Td(-\cl) 
&=&  \Ch\left(\frac{1-(\cl^*)^n}{1-\cl^*}\right) \left(\Td(\cl)\right)^{-1} 
=  \left(\frac{1-e^{-n c}}{1-e^{-c}}\right)
\left(\frac{c}{1-e^{-c}}\right)^{-1} \\ 
&=&  n \left(\frac{n c}{1-e^{-n c}} \right)^{-1} 
=  n \Td(\cl^n)^{-1}, \\ 
\end{eqnarray*}
and we conclude that
$\Ch(\theta^n(\cl^*)) \Td(-\cl) = n \Td(-\psi^n(\cl))$.
Equation (\ref{eq:ChThetaToddIdentity}) now follows from the splitting principle, the
multiplicativity of $\theta^n$ and $\Td$, and the fact that $\Ch$ is a ring
homomorphism.

The more general statement follows the fact that $\Ch$ and $\Td$ factor
through $\Kh_G(\IGX)_\nq$, 
together with the fact that 
$\Ch^0(\theta^j(\cf)  - j^{\aug(\cf)}) = 0$.
\end{proof}

This lemma yields the following useful theorem.
\begin{thm}
Let $({\R}, {\cs})$ be a strongly 
Gorenstein inertial pair. For any
$\alpha \in {\mathbb N}$ 
and integer $n\geq 1$, we have 
\begin{equation}\label{eq:orbChernCharAndPsi}
\ovCh^{\alpha}(\opsi^n(\cf)) = n^\alpha \ovCh^\alpha(\cf)
\end{equation}
in 
$A^\bra{\alpha}_G(\IGX)_\nq$,
where the grading is the ${\cs}$-age grading. 
\end{thm}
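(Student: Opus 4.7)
The plan is to unwind both sides using the definitions of $\ovCh$ and $\opsi^n$, apply the key algebraic identity relating $\Ch$, $\theta^n$, and $\Td$ from Equation~(\ref{eq:ChThetaToddIdentity}), and then carefully track classical degrees on a fixed connected component. Since $(\R,\cs)$ is strongly Gorenstein, $\cs$ is represented by an honest $G$-equivariant vector bundle, so we may invoke the splitting principle directly and avoid any completion subtleties.

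First I would restrict to an arbitrary connected component $[U/G]$ of $\IGX$ and set $a = \age_\cs(U) = \Ch^0(\cs)|_U \in \nz_{\geq 0}$, which is a nonnegative integer by the strongly Gorenstein hypothesis. By definition,
\[
\ovCh(\opsi^n(\cf))|_U = \Ch(\psi^n(\cf))|_U \cdot \Ch(\theta^n(\cs^*))|_U \cdot \Td(-\cs)|_U.
\]
Applying Equation~(\ref{eq:ChThetaToddIdentity}) with $\cs$ in place of $\cf$, the last two factors collapse to $n^{a}\,\Td(-\psi^n(\cs))|_U$, giving
\[
\ovCh(\opsi^n(\cf))|_U = n^{a}\,\Ch(\psi^n(\cf))|_U \cdot \Td(-\psi^n(\cs))|_U.
\]

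Next I would use the classical identity $\Ch^k \circ \psi^n = n^k \Ch^k$ (Equation~(\ref{eq:ChPsi})) together with the observation (via the splitting principle: if $\cs$ formally decomposes as $\sum_i \ell_i$ with $c_i = c^1(\ell_i)$, then $\Td(-\psi^n(\cs)) = \prod_i (-nc_i)/(1-e^{nc_i})$, obtained from $\Td(-\cs) = \prod_i (-c_i)/(1-e^{c_i})$ by substituting $c_i \mapsto nc_i$) that the classical degree-$j$ component of $\Td(-\psi^n(\cs))$ equals $n^j$ times the classical degree-$j$ component of $\Td(-\cs)$. The element $\ovCh^\alpha(\cf)|_U$ lives in classical degree $\alpha - a$ (by the definition of the $\cs$-age grading), so
\[
\ovCh^\alpha(\opsi^n(\cf))|_U = n^a \sum_{k=0}^{\alpha-a} \bigl[\Ch(\psi^n(\cf))\bigr]_k|_U \cdot \bigl[\Td(-\psi^n(\cs))\bigr]_{\alpha-a-k}|_U,
\]
and substituting the two scaling rules pulls out an overall factor of $n^{a} \cdot n^{k} \cdot n^{\alpha-a-k} = n^\alpha$, leaving precisely $n^\alpha \ovCh^\alpha(\cf)|_U$. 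Assembling over all components $U$ yields the desired identity in $A^\bra{\alpha}_G(\IGX)$.

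The one step that requires a small amount of care is the degree rescaling of $\Td(-\psi^n(\cs))$; this is transparent on formal line bundles but needs the strongly Gorenstein hypothesis so that the splitting principle applies to $\cs$ itself (in the merely Gorenstein case, one would have to work in $\Kh_G(\IGX)_\nq$ and appeal to Remark~\ref{rem.psiGor}, which is why the statement is phrased for strongly Gorenstein pairs). Everything else is a bookkeeping exercise in separating out classical degree from $\cs$-age on a given component.
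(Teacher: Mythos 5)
Your proof is correct, and it takes a genuinely different route through the middle of the argument than the paper does. The paper's proof stays in $K$-theory as long as possible by introducing the auxiliary class $\Tdt$ (the ``$K$-theoretic Todd class'' satisfying $\Ch(\Tdt(\cf)) = \Td(\cf)$), using it to rewrite $\Ch(\psi^n(\cf))\,\Td(-\psi^n(\cs))$ as $\Ch\bigl(\psi^n(\cf\,\Tdt(-\cs))\bigr)$, and then applying $\Ch^i\circ\psi^n = n^i\Ch^i$ exactly once to the single $K$-theory element $\cf\,\Tdt(-\cs)$. This requires the additional lemma $\Ch(\Tdt(\cf)) = \Td(\cf)$ and an argument that $\psi^n$ and $\Tdt$ commute modulo the kernel of the completion map. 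You avoid $\Tdt$ entirely by instead working component-by-component in the Chow ring: after applying the same key identity~\eqref{eq:ChThetaToddIdentity}, you split the product $\Ch(\psi^n(\cf))\cdot\Td(-\psi^n(\cs))$ by classical degree and use the splitting principle directly on $\cs$ to see that the degree-$j$ part of $\Td(-\psi^n(\cs))$ is $n^j$ times the degree-$j$ part of $\Td(-\cs)$. Your bookkeeping correctly recovers $n^a\cdot n^k\cdot n^{\alpha-a-k} = n^\alpha$, and restricting to a component and writing $a=\age_\cs(U)$ handles the age offset cleanly. Your approach is more elementary (no auxiliary $\Tdt$ and no commuting-modulo-completion step) at the cost of a more hands-on degree count; the paper's approach is slicker once $\Tdt$ is available and reuses $\Tdt$ elsewhere.

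One small inaccuracy worth flagging: the parenthetical splitting-principle formulas you display, $\Td(-\cs) = \prod_i (-c_i)/(1-e^{c_i})$ and $\Td(-\psi^n(\cs)) = \prod_i (-nc_i)/(1-e^{nc_i})$, do not match the convention $\Td(\ell) = c/(1-e^{-c})$ used in the paper's proof of \eqref{eq:ChThetaToddIdentity}; with that convention one has $\Td(-\cs) = \prod_i (1-e^{-c_i})/c_i$. This does not affect your argument, because the only property you use is that replacing $c_i$ by $nc_i$ rescales the classical degree-$j$ homogeneous piece by $n^j$, and that holds for any multiplicative class expressed as a power series in the $c_i$'s. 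But for a clean write-up the displayed formulas should use the same Todd normalization as the lemma you invoke.
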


\begin{proof}
\begin{eqnarray*}
\ovCh(\opsi^n(\cf)) 
&=& \Ch(\psi^n(\cf)\theta^n({\cs^*}))\Td(-{\cs}) 
= \Ch(\psi^n(\cf))\Ch(\theta^n({\cs}^*))\Td(-{\cs}) \\
&=& \Ch(\psi^n(\cf))\Td(-\psi^n({\cs})) n^{\age} 
= \sum_{\alpha \in {\mathbb N}} n^{\alpha}  \ovCh^\alpha(\cf),
\end{eqnarray*}
where the third equality follows from Equation
(\ref{eq:ChThetaToddIdentity}),
and the final equality follows from 
the definition of $\ovCh^\alpha$, from Equation (\ref{eq:ChPsi}), and
from the fact that for all $j\geq 0$ and $k\geq 1$  
\begin{equation}\label{eq:ToddPsi}
\Td^j \circ\psi^k = k^j \Td^j,
\end{equation}
where $\Td = \sum_{j\geq 0} \Td^j$ such that $\Td^j$ belongs to $A_G^j(\IGX)_\nq$.
Equation (\ref{eq:ToddPsi}) is proved in the same fashion as Equation (\ref{eq:ChPsi}).
\end{proof}

\begin{rem}
If $({\R}, {\cs})$ is
 a Gorenstein inertial pair, then
Equation~(\ref{eq:orbChernCharAndPsi}) 
also holds in 
$A^\bra{\alpha}_G(\IGX)_\nq$,
where $\opsi^n$ is interpreted as a map $\opsi^n:K_G(\IGX)\to \Kh_G(\IGX)_\nq$
(cf. Remark \ref{rem.psiGor}). This follows because 
$\ovCh$ factors
through the completion $\Kh_G(\IGX)_\nq$.
\end{rem}

\begin{df}
Let $({\R}, {\cs})$ be a strongly 
Gorenstein inertial pair.
We define the 
inertial  operations $\oga_t$ on inertial K-theory as in
Equation~(\ref{eq:gamma}), that is 
\begin{equation}\label{eq:ogamma}
\oga_t := \sum_{i = 0}^\infty \oga^i t^i := \olam_{t/(1-t)}.
\end{equation}
\end{df}
\begin{rem}
If $({\R}, {\cs})$ is only Gorenstein, then we may still define
$\gamma_t$ as a map $K_G(\IGX) \to \Kh_G(\IGX)_\nq[[t]]$.
\end{rem}

\begin{thm}\label{thm:oChClassProps}
Let $({\R}, {\cs})$ be a Gorenstein inertial pair. 
The $\cs$-inertial Chern series $\ocv_{t}:K_G(\IGX)\to A_G^*(\IGX)_{\nq}[[t]]$ 
satisfies the following properties:
\begin{description}[labelindent=2\parindent]
\item[Consistency with $\oga$] For all integers $n\geq 1$ and for all $\cf$
  in $K_G(\IGX)_\nq$, we have the following equality in $A^*_G(\IGX)_\nq$:
\begin{equation}\label{eq:oChernGammaConsistency}
\ocv^{n}(\cf) = \ovCh^n(\oga^n(\cf - \ovaug(\cf))),
\end{equation} 
where $\oga_t$ is interpreted as a map $K_G(\IGX)_\nq\to \Kh_G(\IGX)_\nq[[t]]$.
\item[Multiplicativity] For all $\cv$ and $\cw$ in $K_G(\IGX)_\nq$, 
\[ \ocv_{t}(\cv + \cw) = \ocv_{t}(\cv) \star_{\R} \ocv_{t}(\cw). \]
\item[Zeroth Chern class] For all $\cv$ in $K_G(\IGX)_\nq$, 
we have $\ocv^{0}(\cv) = \one$.
\item[Untwisted sector] For all $\cf \in K_G(X^1)\subseteq K_G(\IGX)$ (i.e., supported only on the untwisted sector), the inertial Chern classes
  agree with the ordinary Chern classes, i.e.,
$  \ocv_{t}(\cf) = c_t(\cf)$.
 \item[Classes of Unity] 
All the 
inertial  Chern classes of unity vanish, except for $\ocv^{0}(\one)$, so we have $\ocv_{t}(\one) =   \one$.
\end{description}
\end{thm}

\begin{rem}
The 
theorem shows 
that Equation~(\ref{eq:oChernGammaConsistency}) yields an alternative, but
equivalent, definition of inertial Chern classes.
\end{rem}

\begin{proof}
Multiplicativity and $\ocv^{0}(\cv) = \one$ follow immediately from the
exponential form of Equation (\ref{eq:OChernSeries}) and the fact that
$\ovCh$ is a homomorphism. 

On the untwisted sector, inertial products
reduce to the ordinary products, and the inertial Chern character reduces to the classical Chern character,   
and this shows that Equation (\ref{eq:OChernSeries}) 
agrees with Equation~\eqref{eq:cfromch}, which implies the untwisted sector agrees with ordinary Chern classes.
The Classes of Unity condition will follow immediately from Equation~(\ref{eq:oChernGammaConsistency}).

The hard part of this proof is the consistency of the 
inertial  Chern classes
with $\oga$ (Equation~(\ref{eq:oChernGammaConsistency})).  To prove this, it
will be useful to first introduce the ring homomorphism $\ovCh_t:K_G(\IGX)\to
A^*_G(\IGX)_\nq[[t]]$ via 
$
\ovCh_t(\cf) := \sum_{n\geq 0} \ovCh^n(\cf) t^n.
$
For the remainder of the proof, all products are understood to be
inertial products. We have the following equality in 
$A^*_G(\IGX)_\nq[[t]]$, 
\begin{eqnarray*}
\ovCh_t(\olam_u(\cf))) &=&\ovexp\left( \sum_{k\geq 1} \frac{(-1)^{k-1}}{k} \ovCh_t(\opsi^k(\cf)) u^k \right) \\
&=& \ovexp\left( \sum_{k\geq 1} \frac{(-1)^{k-1}}{k} \ovCh_{k t}(\cf) u^k \right) \\
&=&\ovexp\left(\sum_{k \geq 1} \frac{(-1)^{k-1}}{k}
\sum_{\alpha\geq 0}\ovCh^\alpha(\cf) (k t)^\alpha u^k \right)
\\ 
&=&\ovexp\left(\sum_{\alpha\geq 0} 
\ovCh^\alpha(\cf) t^\alpha \sum_{k\geq 1} (-1)^{k-1} k^{\alpha-1} u^k\right),  
\end{eqnarray*}
where the first equality follows from the definition of $\olam$ and the fact that $\ovCh_t$ is a ring homomorphism, and the second equality follows from Equation (\ref{eq:orbChernCharAndPsi}).
From the definition of $\oga_t$, it follows that
\begin{eqnarray*}
\ovCh_t(\oga_u(\cf - \ovaug(\cf)))
& =& \ovexp\left(\sum_{\alpha\geq 0} \ovCh^\alpha(\cf - \ovaug(\cf)) t^\alpha 
\sum_{k\geq 1} (-1)^{k-1} k^{\alpha-1} \left(\frac{u}{1-u}\right)^k\right)\\
& =& \ovexp\left(\sum_{\alpha\geq 0}
\sum_{k\geq 1} (-1)^{k-1} k^{\alpha-1} \ovCh^\alpha(\cf) t^\alpha
\sum_{n\ge k} u^n{{n-1}\choose{k-1}}\right)\\
& =& \ovexp\left(\sum_{\alpha\geq 0}
\ovCh^\alpha(\cf) t^\alpha
\sum_{n\geq 1}u^n \sum_{k=1}^{n} (-1)^{k-1} k^{\alpha-1}   {{n-1}\choose{k-1}}\right)\\
& =& \ovexp\left(\sum_{\alpha\geq 0}
\ovCh^\alpha(\cf) t^\alpha
\sum_{n\geq 1}u^n (-1)^{n-1}(n-1)! S(\alpha,n)\right),
\end{eqnarray*}
where 
\[
S(\alpha,n) = \frac{1}{n!}\sum_{j=0}^n (-1)^{n-j} {n \choose j} j^\alpha
\]
are the Stirling numbers of the second kind. Projecting out those terms which
are 
not 
powers of 
$z := ut$ 
yields the equality
\[
\sum_{\ell\geq 0} \ovCh^\ell(\ovga^\ell(\cf-\ovaug(\cf))) z^\ell =
\ovexp\left(\sum_{s\geq 0} z^s \ovCh^s(\cf) (-1)^{n-1} (n-1)! S(n,n)\right).
\]
The identity $S(n,n) = 1$ and Equation~(\ref{eq:OChernSeries}) yield
Equation~(\ref{eq:oChernGammaConsistency}).
\end{proof}

Even when an inertial pair $({\R},\cs)$ is not Gorenstein, there are natural subrings of $K_G(\IGX)$ and $A^*_G(\IGX)$ where things behave well (as if $({\R},\cs)$ were Gorenstein). 

\begin{df} 
Let $({\R},\cs)$ be an inertial pair, and let $\ell$ be the number of connected components of $I\ix = [\IGX/G]$. 
The subring of
  $K_G(\IGX)$  consisting of all elements of 
  $\cs$-grading $0 \in (\nq/\nz)^\ell$
  is called the \emph{Gorenstein subring $\KGor_G(\IGX)$} of  $K_G(\IGX)$, and the 
  subring of $A_G^*(\IGX)$ consisting of all elements of 
$\cs$-degree in $\nz^\ell \subseteq \nq^\ell$
is called the  \emph{Gorenstein subring $\AGor_G(\IGX)$} of $A_G^*(\IGX)$. 
\end{df}

\begin{rem}
The previous theorem holds for a general inertial pair of a $G$-space $X$, 
provided that $K_G(\IGX)$ and $A_G^*(\IGX)$ are replaced by their Gorenstein subrings
$\KGor_G(\IGX)$ and $\AGor_G^*(\IGX)$, respectively.
\end{rem}

\subsection{$\psi$-ring and $\lambda$-ring structures on inertial K-theory}
The main result of this section is the following:
\begin{thm}\label{thm:OrbifoldPsiRing}
If $G$ is a diagonalizable group and 
$({\R}, {\cs})$
is a strongly Gorenstein inertial pair on $\IGX$, then 
$(K_G(\IGX),\star_{\R},\one,\ovaug,\opsi)$ is an augmented $\psi$-ring.

Moreover, for general (possibly non-diagonalizable) $G$
and any inertial pair $(\R,\cs)$, 
the augmentation completion of
the Gorenstein subring $\KGor_G(\IGX)_\nq$ of $K_G(\IGX)_\nq$ is an augmented $\psi$-ring.
\end{thm}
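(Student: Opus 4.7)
The plan is to verify the five $\psi$-ring axioms (Definition~\ref{df:PsiOps}) together with the augmentation compatibility $\ovaug\circ\opsi^n = \opsi^n\circ\ovaug = \ovaug$ for $\opsi^n(\cf) = \psi^n(\cf)\cdot\theta^n(\cs^*)$. Three of the axioms are essentially formal: $\opsi^1 = \id$ follows from $\theta^1 = 1$; additivity of $\opsi^n$ in its argument is inherited from additivity of $\psi^n$; and the composition law $\opsi^m\circ\opsi^n = \opsi^{mn}$ reduces, after unwinding the definition, to the multiplicative identity $\psi^m(\theta^n(\cs^*))\cdot\theta^m(\cs^*) = \theta^{mn}(\cs^*)$, which holds on any line bundle by a direct computation and hence on all of $K_G(\IGX)$ by the splitting principle.

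The substantive content is the multiplicativity $\opsi^n(\cf \star_\R \cg) = \opsi^n(\cf)\star_\R\opsi^n(\cg)$. The plan is to expand both sides using the definitions of $\opsi^n$ and of the inertial product $\star_\R = \mu_*(e_1^*(-)\cdot e_2^*(-)\cdot\euler(\R))$, and reduce the identity to a computation on $\II_G X$. The key algebraic ingredient is the K-theoretic identity
\begin{equation*}
\psi^n(\euler(V)) = \theta^n(V^*)\cdot \euler(V),
\end{equation*}
which follows from $\psi^n(1-L) = (1-L)\theta^n(L)$ on line bundles and the splitting principle. Applied to $\R$, together with multiplicativity of $\theta^n$ and the defining identity of an inertial pair (Definition~\ref{df.inertialpair}(a))
\begin{equation*}
\R = e_1^*\cs + e_2^*\cs - \mu^*\cs + T_\mu,
\end{equation*}
this yields
\begin{equation*}
e_1^*\theta^n(\cs^*)\cdot e_2^*\theta^n(\cs^*)\cdot \euler(\R) = \theta^n(N_\mu^*)\cdot\psi^n(\euler(\R))\cdot\mu^*\theta^n(\cs^*),
\end{equation*}
where $N_\mu^* = -T_\mu$ is the K-theoretic conormal class of $\mu$. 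Substituting this identity into the expansion of $\opsi^n(\cf)\star_\R\opsi^n(\cg)$ and then applying the projection formula together with the Adams-Riemann-Roch theorem (Theorem~\ref{thm:ARR}) transforms the resulting pushforward into $\psi^n(\mu_*(e_1^*\cf\cdot e_2^*\cg\cdot\euler(\R)))\cdot\theta^n(\cs^*) = \opsi^n(\cf\star_\R\cg)$, as required. Augmentation compatibility is then checked component-by-component: on a sector $U$ with $\age_\cs(U)=0$, strong Gorensteinness forces $\cs|_U$ to be the zero bundle, so $\theta^n(\cs^*)|_U = 1$ and $\opsi^n|_U$ reduces to the classical $\psi^n$, which satisfies the analogous classical identity; on sectors of positive $\cs$-age the inertial augmentation vanishes, so all three expressions are zero.

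The principal obstacle is the appeal to Adams-Riemann-Roch, which requires $\mu\colon \II_G X \to \IGX$ to be an equivariant regular embedding. When $G$ is diagonalizable, every simultaneous fixed locus $X^{g_1,g_2}\subseteq X^{g_1g_2}$ is smooth and gives such a regular embedding, so Theorem~\ref{thm:ARR} applies sector-by-sector. For general $G$, where $\mu$ may fail to be a regular embedding and where $\cs$ need not be an honest vector bundle, the strategy is to pass to the augmentation completion $\Kh_G(\IGX)_\nq$ of the Gorenstein subring $\KGor_G(\IGX)$. By Theorem~\ref{thm.completions} the classical and inertial topologies agree, so the completion is compatible with the inertial product; by Proposition~\ref{prop.bottextend}, on the Gorenstein subring the Bott class $\theta^n(\cs^*)$ extends to the completion via binomial expansion; and the same formal identities established above then carry over to $\Kh_G(\IGX)_\nq$, producing the augmented $\psi$-ring structure claimed in the second part of the theorem.
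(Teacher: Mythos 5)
Your proof is correct and follows essentially the same strategy as the paper's: the formal axioms (identity, additivity, composition via $\psi^m(\theta^\ell(\cs^*))\theta^m(\cs^*)=\theta^{m\ell}(\cs^*)$), the reduction of multiplicativity to Adams--Riemann--Roch for the regular embedding $\mu$ on each sector when $G$ is diagonalizable, the defining relation $\R=e_1^*\cs+e_2^*\cs-\mu^*\cs+T_\mu$ to cancel the Bott-class contributions, and the passage to the augmentation completion of the Gorenstein subring for general $G$. The only (cosmetic) difference is that you run the multiplicativity computation starting from $\opsi^n(\cf)\star_\R\opsi^n(\cg)$ and pull $\psi^n$ out through $\mu_*$, while the paper starts from $\opsi^n(\cf\star_\R\cg)$ and pushes $\psi^n$ in; your packaging of the key cancellation into the single identity $e_1^*\theta^n(\cs^*)\cdot e_2^*\theta^n(\cs^*)\cdot\euler(\R)=\theta^n(N_\mu^*)\cdot\psi^n(\euler(\R))\cdot\mu^*\theta^n(\cs^*)$ is a tidy reorganization of the paper's five-line chain of equalities but is the same underlying argument.
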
 

\begin{rem}
The hypothesis that $G$ is diagonalizable is necessary, as is demonstrated later in this section (see Example~\ref{ex:BS3}).
\end{rem}

With a little work we get the following corollary.

\begin{crl}\label{crl.OrbLambdaRing}
Let $({\R}, {\cs})$ be 
a strongly Gorenstein
inertial pair with $G$ diagonalizable.
Then 
$(K_G(\IGX)_\nq,\star_{\R},\one,\olam)$ 
is 
an augmented $\lam$-algebra over $\nq$.

Moreover, for general (possibly non-diagonalizable) $G$
and any inertial pair $(\R,\cs)$, 
the augmentation completion of the Gorenstein subring $\KGor_G(\IGX)_\nq$ of
$K_G(\IGX)_\nq$  is an augmented 
$\lam$-algebra over $\nq$.
\end{crl}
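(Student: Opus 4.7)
The plan is to upgrade the augmented $\psi$-ring structure from Theorem~\ref{thm:OrbifoldPsiRing} to an augmented $\lam$-ring structure by invoking the converse direction of Theorem~\ref{thm:LambdaFromPsi}. When $G$ is diagonalizable and $(\R,\cs)$ is strongly Gorenstein, Theorem~\ref{thm:OrbifoldPsiRing} provides that $(K_G(\IGX),\star_{\R},\one,\ovaug,\opsi)$ is an augmented $\psi$-ring. Tensoring with $\nq$ and applying Theorem~\ref{thm:LambdaFromPsi} then furnishes a $\lam$-algebra on $K_G(\IGX)_\nq$ via the exponential formula~\eqref{eq:LambdaFromPsi}, which is precisely the defining formula~\eqref{eq:oLambdaFromoPsi} for $\olam_t$. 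Thus $(K_G(\IGX)_\nq,\star_{\R},\one,\olam)$ is automatically a $\lam$-algebra over $\nq$.

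It remains to verify the augmentation compatibility~\eqref{eq:AugLambda}, namely
\[
\ovaug(\olam_t(\cf)) = \olam_t(\ovaug(\cf)) = (1+t)^{\ovaug(\cf)}.
\]
The key observation is that $\ovaug$ is a \emph{ring} homomorphism with respect to $\star_{\R}$: because the $\cs$-age is non-negative, the projection onto the degree-zero part of the $\cs$-graded ring $A^*_G(\IGX)_\nq$ is multiplicative, so $\inCh^0$ (being the composition of this projection with the ring map $\ovCh$) is a ring homomorphism, and writing $\ovaug(\cf) = \inCh^0(\cf)\cdot\co$ componentwise yields $\ovaug(\cf \star_{\R} \cg) = \ovaug(\cf) \star_{\R} \ovaug(\cg)$. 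Combining this with the $\psi$-ring identity $\ovaug \circ \opsi^r = \ovaug$ and applying $\ovaug$ to~\eqref{eq:oLambdaFromoPsi} produces
\[
\ovaug(\olam_t(\cf)) \;=\; \oexp\!\left( \sum_{r\ge 1} \frac{(-1)^{r-1}}{r}\, \ovaug(\cf)\, t^r \right).
\]
Now $\ovaug(\cf)$ is supported on components of $\cs$-age zero; on these components the inertial pair axioms (Definition~\ref{df.inertialpair}(a),(b) applied to $\Phi = (1,1)$) force $\cs$ and $\R$ to vanish, so $\oexp$ and $\star_{\R}$ reduce to the classical $\exp$ and tensor product, giving $\ovaug(\olam_t(\cf)) = \exp\!\bigl(\ovaug(\cf)\log(1+t)\bigr) = (1+t)^{\ovaug(\cf)}$. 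The other equality follows because $\ovaug(\cf)$ lies in the untwisted sector, where $\olam_t$ coincides with the classical $\lam_t$ (since $\opsi^n=\psi^n$ and $\star_\R=\cdot$ there), and classically $\lam_t(d\cdot\co) = (1+t)^d$.

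For the non-diagonalizable case, I would pass to the Gorenstein subring $\KGor_G(\IGX)$ and its augmentation completion $\Kh_G(\IGX)_\nq$. By Proposition~\ref{prop.bottextend} and Theorem~\ref{thm.completions}, the Bott classes $\theta^n(\cs^*)$ and hence $\opsi^n$ and $\olam_t$ are well-defined in the completion, and the augmented $\psi$-ring structure supplied by Theorem~\ref{thm:OrbifoldPsiRing} transports through; the preceding verification of~\eqref{eq:AugLambda} then carries over verbatim. The main technical obstacle I anticipate is the ring-homomorphism property of $\ovaug$ with respect to $\star_{\R}$, which is what allows one to pull $\ovaug$ past the formal power series $\oexp$; once that step is secured, everything else reduces cleanly to the classical calculation on the untwisted sector.
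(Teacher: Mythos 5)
Your proof follows the paper's route: apply Theorem~\ref{thm:LambdaFromPsi} to the augmented $\psi$-ring of Theorem~\ref{thm:OrbifoldPsiRing}, then verify the augmentation law~\eqref{eq:AugLambda} by pushing $\ovaug$ through the series~\eqref{eq:oLambdaFromoPsi} and invoking $\ovaug\circ\opsi^n = \ovaug$.

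There is, however, an error in your justification of the final two equalities. You assert that $\ovaug(\cf)$ ``lies in the untwisted sector'' and cite Definition~\ref{df.inertialpair}(a),(b) at $\Phi=(1,1)$ to reduce $\oexp$ and $\star_{\R}$ to the classical $\exp$ and $\cdot$. But $\ovaug(\cf)$ is supported on \emph{all} components of $\cs$-age zero, and these need not be untwisted, even for a strongly Gorenstein pair: for $\B\mu_2$ with the orbifold pair one has $\cs=0$ on every component, so the twisted sector has age zero and $\ovaug(\cf)$ generally has a nonzero piece there. Condition (b) forces $\R$ to vanish only when one of $e_1,e_2$ is the identity sector, so your reduction fails for products of twisted age-zero classes. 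The correct argument, which is what the paper uses, requires no such reduction: $\oexp\bigl(y\log(1+t)\bigr)=(1+t)^{y}$ is a formal identity in $\nq[y][[t]]$ and therefore holds with $\star_{\R}$ in place of the ordinary product, and $\olam_t(\ovaug(\cf))=(1+t)^{\ovaug(\cf)}$ then follows by the same formal manipulation from $\opsi^r(\ovaug(\cf))=\ovaug(\cf)$, not from $\ovaug(\cf)$ being untwisted. A secondary caveat: your observation that $\ovaug$ must be a $\star_{\R}$-ring homomorphism before it can be pushed inside $\oexp$ is the right thing to flag (the paper passes over it silently, treating it as part of the earlier assertion that $(K_G(\IGX),\star,\one,\ovaug)$ is an augmented ring), but your justification stops short: multiplicativity of $\inCh^0$ does not by itself give multiplicativity of $\ovaug$, since you also need the lift $A_G^{\bra{0}}(\IGX)\to K_G(\IGX)$, $(a_U)\mapsto\sum_U a_U\co_U$, to intertwine the inertial products.
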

\begin{proof}[Proof of Corollary \ref{crl.OrbLambdaRing}]
Combining Theorem~\ref{thm:OrbifoldPsiRing} with Theorem~\ref{thm:LambdaFromPsi}, all 
that we must prove is that 
\begin{equation}\label{eq:OvaugOlam}
\ovaug(\olam_t(\cf)) = \olam_t(\ovaug(\cf)) = (1+t)^{\ovaug(\cf)}.
\end{equation}
Here we have omitted the $\pd$ from the notation, but all products are the
inertial product $\pd$, and exponentiation is also with respect to the
product $\pd$.  

For all $\cf \in K_G(\IGX)$, we have 
\[
\ovaug(\olam_t(\cf)) = \sum_{i\geq 0} t^i \ovaug(\olam^i(\cf)),
\]
but 
\begin{eqnarray*}
\ovaug(\olam_t(\cf)) 
&=& \ovaug(\oexp(\sum_{n\geq 1} \frac{(-1)^{n-1}}{n} t^n \opsi^n(\cf))) 
= \oexp(\sum_{n\geq 1} \frac{(-1)^{n-1}}{n} t^n \ovaug(\opsi^n(\cf))) \\
&=& \oexp(\sum_{n\geq 1} \frac{(-1)^{n-1}}{n} t^n \ovaug(\cf)) 
= (1+t)^{\ovaug(\cf)},
\end{eqnarray*}
where the third line follows from  
$\ovaug\circ\opsi^n = \ovaug$ (by Theorem~\ref{thm:OrbifoldPsiRing}). Finally, we have that $\olam_t(\ovaug(\cf))=(1+t)^{\ovaug(\cf)}$, since $\ovaug$ commutes
with $\opsi$ by Theorem~\ref{thm:OrbifoldPsiRing}.
\end{proof}

\begin{proof}[Proof of Theorem \ref{thm:OrbifoldPsiRing}]
First, it is straightforward from the definition that $\opsi^n(\cf+\cg) = \opsi^n(\cf)+\opsi^n(\cg)$, and also $\opsi^1(\cf) = \cf$, since $\theta^1(\cg) = \one$ for any $\cg$.   
Second, we have $\opsi^n(\one) = \one$, since $\one$ is supported only on 
$K_G(X^1)$, and ${\cs}_{X^1} = 0$ (because $({\R}, {\cs})$
is an inertial pair).
Now, to show for all $\cf$ in $K_G(\IGX)$ that
\[
\opsi^n(\opsi^\ell(\cf)) = \opsi^{n\ell}(\cf),
\]
we observe that
\[
\opsi^n(\opsi^\ell(\cf)) 
= \opsi^n(\psi^\ell(\cf)\theta^\ell({\cs}^*)) 
= \psi^{n\ell}(\cf)\psi^n(\theta^\ell({\cs}^*)\theta^n({\cs}^*)).
\]
Hence, we need to show that
\[
\psi^n(\theta^\ell({\cs}^*))\theta^n({\cs}^*) = \theta^{n\ell}({\cs}^*).
\]
This follows from the splitting principle in ordinary K-theory, from the fact that the Bott classes
are multiplicative, and from the fact that for any line bundle $\cl$ we have
\begin{equation}
\psi^n(\theta^\ell(\cl))\theta^n(\cl) 
= \psi^n\left(\frac{1-\cl^\ell}{1-\cl}\right) \frac{1-\cl^n}{1-\cl} 
= \frac{1-\cl^{n \ell}}{1-\cl^n} \frac{1-\cl^n}{1-\cl} 
= \theta^{n\ell}(\cl).
\end{equation}

It remains to show that $\opsi$ preserves the inertial product defined
by ${\R}$, i.e.,
\begin{equation}\label{eq:OrbifoldPsiHomo}
\opsi^n(\cf
\star \cg) = \opsi^n(\cf)\star\opsi^n(\cg),
\end{equation}
where $\star$ is understood to refer to the $\star_{\R}$-product. It is at this point in the proof that we need to use the hypothesis that $G$ is diagonalizable.
\begin{lm} \label{lm.IIembed}
Let $G$ be a diagonalizable group. For each $(m_1,m_2) \in G \times G$
 let $X^{m_1,m_2} = \{(m_1,m_2,x)|m_1x= m_2 x=x\} \subset \II_G X$.
Then $X^{m_1,m_2}$ is open and closed (but possibly empty)
and the restriction of $\mu$ to $X^{m_1,m_2}$ is a regular embedding.
\end{lm}
\begin{proof}
There is a decomposition of $\II_G X$ into closed and open components indexed by conjugacy classes of pairs in $G \times G$. However, since
$G$ is diagonalizable, each conjugacy class consists of a single pair. 
If $\Psi = \{(m_1,m_2)\}$, then $\II(\Psi) = X^{m_1,m_2}$ 
and the multiplication map
restricts to the closed embedding $\mu \colon X^{m_1,m_2} \to X^{m_1m_2}$,
where $X^{m_1m_2} = \{(m_1m_2,x)| m_1m_2x =x\} \subset I_GX$. Since
$X$ is smooth, the fixed loci $X^{m_1,m_2}$ and $X^{m_1m_2}$ are also smooth, so the map is a regular embedding.
\end{proof}

Let us prove that $\opsi$ is compatible with the inertial product.
\begin{equation}
\begin{array}{ccl} \label{eq.prod1stpart}
\opsi^n(\cv\star\cw) 
&=& \theta^{n}({\cs}^*) \cdot \psi^n(\cv\star \cw) \\
&=& \theta^{n}({\cs}^*) \cdot  \psi^n\left(\mu_*( e_1^*\cv 
\cdot e_2^*\cw  \cdot \lam_{-1}({\R}^*))\right) 
\end{array}
\end{equation}
By our lemma  $\II_GX$ decomposes as a disjoint sum  $\coprod X^{m_1,m_2}$
with $\mu|_{X^{m_1,m_2}}$ a closed regular embedding.
Since an element $\alpha \in K_G(\II_GX)$ decomposes as sum
$\alpha = \sum_{(m_1,m_2)}\alpha_{m_1,m_2}$ with $\alpha_{m_1,m_2} \in 
K_G(X^{m_1,m_2})$,
we may invoke the equivariant
Adams-Riemann-Roch for closed embeddings (Theorem \ref{thm:ARR}) on each 
of $\alpha_{m_1,m_2}$ to conclude that 
$\psi^n\mu_*\alpha = \mu_*(\theta^n(N^*_\mu) \psi^n\alpha)$,
where $N^*_\mu$ is the conormal bundle of $\mu$. Writing
$N^*_\mu = -T^*_\mu$ (see Definition~\ref{df.inertialpair}) we obtain the equalities
\begin{equation}
\begin{array}{ccl} \label{eq.prod2stpart}
&=& \theta^{n}({\cs}^*) \cdot  \mu_*\left[\theta^n(-T^*_\mu) 
\cdot \psi^n\left( e_1^*\cv \cdot e_2^*\cw
  \cdot \lam_{-1}({\R})^*\right) \right] \\
&=& \theta^{n}({\cs}^*) \cdot  \mu_*\left[\theta^n(-T^*_\mu)  
\cdot e_1^*\psi^n(\cv)
  \cdot e_2^*\psi^n(\cw) \cdot  \psi^n(\lam_{-1}({\R}^*)) \right] \\
&=& \theta^{n}({\cs}^*) \cdot  \mu_*\left
[\theta^n(-T^*_\mu)  \cdot e_1^*\psi^n(\cv)
  \cdot e_2^*\psi^n(\cw)  \cdot \lam_{-1}(\psi^n({\R}^*)) 
\right] \\
&=& \theta^{n}({\cs}^*) \cdot  \mu_*
\left[\theta^n(-T^*_\mu)  \cdot e_1^*\psi^n(\cv)
  \cdot e_2^*\psi^n(\cw)  \cdot \lam_{-1}({\R}^*)  
\cdot \theta^n({\R}^*)  \right]  \\
&=& \theta^{n}({\cs}^*) \cdot  \mu_*\left[ e_1^*\psi^n(\cv)
  \cdot e_2^*\psi^n(\cw)  \cdot \lam_{-1}({\R}^*)  \cdot 
\theta^n({\R}^*-T^*_\mu) \right], 
\end{array}
\end{equation}
where the second equality follows from the fact that $\psi^n$ respects
the ordinary ($\cdot$) multiplication, the third from the definition
of the Euler class and the fact that \cite[p. 48]{Knu:73} for all $i,n$,
\[
\psi^n\circ \lam^i = \lam^i\circ \psi^n,
\]
the fourth from the fact that for any nonnegative element $\cf$ in 
$K_G(\IGX)$ we have 
\[
\theta^n(\cf){\lam_{-1}(\cf)} = {\lam_{-1}(\psi^n(\cf))},
\]
and the fifth from the multiplicativity of $\theta^n$.
Since $\opsi^n(\cf) = \psi^n(\cf)\theta^n({\cs}^*)$,
we may express the last line of 
\eqref{eq.prod2stpart} 
as 
\begin{equation}\label{eq.psidef}
\theta^n({\cs}^*)\mu_*\left[ e_1^*\opsi^n(\cv)
  \cdot e_2^*\opsi^n(\cw)  \cdot \lam_{-1}({\R}^*) 
\cdot \theta^n({\R}^*-T^*_\mu-e_1^*{\cs}^* -e_2^*{\cs}^*)
\right]. 
\end{equation}
Applying the projection formula to \eqref{eq.psidef} yields
\begin{equation*} \label{eq.almostthere}
\opsi^n(\cv \star \cw) = \mu_*\left[ e_1^*\opsi^n(\cv) \cdot e_2^*\opsi^n(\cw) \cdot \lam_{-1}({\R}^*)
\cdot \theta^n({\R}^* - T^*_\mu -e_1^*{\cs}^* -
e_2^*{\cs}^* +\mu^*{\cs}^*)\right].
\end{equation*}
Now because $({\R}, {\cs})$ is an inertial pair, we have
$${\mathscr R} = e_1^* {\cs} + e_2^*{\cs} -\mu^*{\cs} 
+ T\mu,$$
so 
\[
\opsi^n(\cv \star \cw)  = \mu_*\left[e_1^*\opsi^n(\cv) \cdot e_2^*\opsi^n(\cw)
\cdot \lambda_{-1}({\R}^*) \right]
 =   \opsi(\cv) \star \opsi(\cw),
\]
as claimed.

Finally, from the definition of $\opsi$ and the fact that the ordinary
augmentation in ordinary equivariant K-theory is preserved 
by 
and commutes with
the ordinary $\psi$-operations, we have 
\begin{equation}\label{eq:OvaugOpsi}
\ovaug(\opsi^n(\cv)) = \opsi^n(\ovaug(\cv)) = \ovaug(\cv).
\end{equation}

When $G$ is not diagonalizable, $\mu$ is a finite l.c.i.~morphism, 
but in general it does not restrict to a closed embedding on each
component of $\II_GX$. In this case the
equivariant Adams-Riemann-Roch theorem holds \cite[Theorem
4.5]{Koc:98} after completing $K_G(I_GX)_{\nc}$ and
$K_G(\II_GX)_{\nc}$ at the augmentation ideal. Restricting to the
augmentation completion of the Gorenstein subring insures that the
Bott class $\theta^k(\cs^*)$ takes values in that subring (which has
$\nq$ coefficients), whereas the Bott class in general would take
values in the augmentation completion of
$K_G(\IGX)\otimes\overline{\nq}$. The rest of the above argument goes
through verbatim.
\end{proof}
\begin{rem} 
  Suppose $G$ is not Abelian, but the fixed locus $X^g$ is empty if
  $g$ is not in the center of $G$. Then, since the conjugacy classes of
  central elements are singletons, the argument of Lemma
  \ref{lm.IIembed} shows that $\II_GX$ is a disjoint sum of components such that
  the restriction of $\mu$ to each of them is a regular
  embedding. Arguing as in the proof of Theorem
  \ref{thm:OrbifoldPsiRing} shows that in this case the inertial
  product would also commute with the inertial Adams
  operations.
\end{rem}
\begin{rem}
  If $G$ is finite, then for each conjugacy class $\Phi \subset
  G\times G$ and $\Psi \subset G$ such that $\mu( \II(\Phi)) \subset
  I(\Psi)$, the pushforward map $\mu_* \colon K_G(\II(\Phi)) \to
  K_G(\I(\Psi))$ can be identified as a combination of pushforward
  along a regular embedding with an induction functor.  Precisely, if
  $(m_1, m_2) \in \Phi$ is any element, then $K_G(\II(\Phi))$ can be
  identified with $K_{Z_{1,2}}(X^{m_1,m_2})$, where $Z_{1,2}$ is the
  centralizer of $m_1$ and $m_2$ in $G$. Likewise $K_G(I(\Psi))$ can be
  identified with $K_{Z_{12}}(X^{m_1m_2})$, where $Z_{12}$ is the
  centralizer of the element $m_1m_2$.  Let $i \colon X^{m_1,m_2}
  \hookrightarrow X^{m_1m_2}$ be the inclusion.  Via these
  identifications the pushforward $\mu_*$ is the composition of the
  pushforward $i_* \colon K_{Z_{1,2}}(X^{m_1,m_2}) \to
  K_{Z_{1,2}}(X^{m_1m_2})$, with the induction functor
  $\Ind_{Z_{1,2}}^{Z_{12}} :K_{Z_{12}}(X^{m_1m_2}) \to
  K_{Z_{1,2}}(X^{m_1m_2})$.  
In this case, determining whether the equality $\opsi^j(\alpha
  \star \beta) = \opsi^j(\alpha) \star \opsi^j (\beta)$ holds in $K_G(I_GX)_\nq$
  boils down to the question of whether the classical Adams operations $\psi^j$
 commute with
  induction. This question has been studied in Section 6 of
  \cite{Koc:98},
where it is proved that Adams operations commute with induction after completion at the augmentation ideal.
\end{rem}

\begin{rem} 
Let $(\R,\cs)$ be a Gorenstein inertial pair on $\IGX$. For
each integer $k\geq 1$, let $\psit^k:A^*_G(\IGX)\to
  A^*_G(\IGX)$ be defined by Equation (\ref{eq:ChowPsi}).
If  $\ovaug:A^*_ G(\IGX)\to
  A^{\{0\}}_G(\IGX)$ is the canonical projection, then the inertial
  Chow theory $(A^*_G(\IGX),\pd,1,\psit, \ovaug)$ is an augmented
  $\psi$-ring. 

Moreover, if $G$ is a diagonalizable group and $(\R,\cs)$ is a strongly
Gorenstein inertial pair on $\IGX$, then the
summand $\Kh_G(\IGX)_{\nq}$ inherits an augmented $\psi$-ring
  structure from $K_G(\IGX)_\nq$. In addition, 
  Equation~(\ref{eq:orbChernCharAndPsi}) means that the inertial Chern
  character
homomorphism
$\ovCh:K_G(\IGX)_\nq\to A_G^*(\IGX)_\nq$ 
preserves the augmented
  $\psi$-ring structures and factors through an isomorphism of
 augmented $\psi$-rings $\Kh_G(\IGX)_\nq \to A_G^*(\IGX)_{\nq}$.
  In particular, if $G$ acts freely on $X$, then the inertial Chern
  character is an isomorphism of augmented $\psi$-rings.
\end{rem}

\begin{ex}\label{ex:BS3} 
  The hypothesis of Theorem~\ref{thm:OrbifoldPsiRing} 
  that $G$ is diagonalizable is necessary, as demonstrated by the following example. 
  
  Let $G= S_3$ be the symmetric group $S_3$ on three letters, and
  consider the classifying stack $\B S_3 = [\text{pt}/S_3].$ The
  inertia stack $IBS_3$ is the disjoint union of three components,
  corresponding to the conjugacy classes of $(1)$, $(12)$, and
  $(123)$ in $S_3$.  The component corresponding to class
  $\Psi$ is the stack $[\Psi/S_3]$, which is isomorphic to the
  classifying stack $\B Z$, where $Z$ is the centralizer of any element
  of $\Psi$. So the components of the inertia stack are isomorphic
  to $\B S_3$, $\B \mu_2$ and $\B \mu_3$.  

The double inertia $\II BS_3$ is the disjoint union of
  eleven components; three are 
  isomorphic to a point ($\B\{e\}$), corresponding to
  the conjugacy classes of the pairs 
  $((12),(13))$, 
  $((12),(123))$ and $((123),(12))$ respectively; three isomorphic to $\B\mu_2$, 
  corresponding to the conjugacy classes of the pairs
 $((1),(12))$, $((12),(1))$, and $((12),(12))$; 
 four 
 isomorphic to $\B\mu_3$, corresponding to the conjugacy classes of $((1),(123))$, $((123),(1))$, $((123),(123))$, and $((123),(132))$;
  and the identity component is isomorphic to $\B S_3$. 
Consider the inertial
  product with ${\R} = 0$ and ${\cs} = 0$. (This is just the usual
  orbifold product on $\B S_3$.)

  Let $\chi \in R(\mu_2)=K(\B\mu_2)$ be the defining character.
  Denote by $\chi |_{\B\mu_2} \in K(I\B S_3)$ the class which is
  $\chi$ on the sector isomorphic to $\B\mu_2$ (corresponding to the
  conjugacy class of a transposition in $S_3$) and 0 on all other
  sectors. Likewise, let $1|_{\B\mu_2} \in K(I\B S_3)$ be the class
  which is the trivial representation on the sector isomorphic to
  $\B\mu_2$ and 0 on all other sectors.  We will compare $\psi^2\left(
    \chi|_{\B\mu_2} \star 1|_{\B\mu_2}\right)$ and $\psi^2\chi
  |_{\B\mu_2} \star \psi^2\chi |_{\B\mu_2}$ and show that they are not
  equal in $K(I\B S_3)$.

Since $\R= 0$, the orbifold product is given by the formula
$$\alpha \star \beta  = \mu_*(e_1^*\alpha \cdot e_2^*\beta).$$
To compute the product, we note if $\alpha$ is supported on the sector
corresponding to the conjugacy class of $(12)$, then $e_1^*\alpha$ is supported
on the components of $\II\B S_3$ corresponding to the conjugacy classes of pairs
$$\left( (12), (1)\right), \left( (12), (13)\right), \left( (12), (12)\right),
\left( (12), (123)\right).$$
Similarly, $e_2^*\alpha$ is supported on the components corresponding to
the conjugacy classes of the pairs
$$\left((1), (12)\right), 
\left( (12), (13)\right), 
\left( (12), (12)\right),
\left( (123), (12)\right).$$ So if $\alpha, \beta$ are both
supported on the sector corresponding to 
$(12)$, then the classical product $e_1^*\alpha \cdot e_2^*\alpha$ is
supported on components of $\II \B S_3$, corresponding to the conjugacy
classes of the pairs $\left((12), (13)\right)$ and $\left(
  (12),(12)\right)$. The multiplication map $\mu$ takes the 
component corresponding to the conjugacy class of $\left((12),
  (13)\right)$ to the twisted sector isomorphic to $\B\mu_3$
corresponding to the conjugacy class of 3-cycles. Likewise, $\mu$ maps
the component corresponding to the conjugacy class of $\left( (12), (12)\right)$
to the untwisted sector $\B S_3$, which corresponds to the conjugacy class of the identity.

Identifying $K(\B G) = R(G)$, we see that $K(I\B S_3) = R(S_3) \oplus
R(\mu_2) \oplus R(\mu_3)$, while $K(\II\B S_3) = R(S_3) \oplus
R(\{e\})^3 \oplus R(\mu_2)^3 \oplus R(\mu_3)^4$.  Under this
identification the pullbacks $e_i^* \colon K(I\B S_3) \to K(\II\B S_3)$
correspond to restriction functors between the various representation
rings. Likewise, the pushforward $\mu_* \colon
K(\II\B S_3) \to K(I\B S_3)$ corresponds to the induced representation
functor.
Hence, 
\[
\chi|_{\B\mu_2} \star 1|_{\B\mu_2}  =  (\Ind_{\mu_2}^{S_3}
  \chi)|_{\B S_3} + (\Ind_{\{e\}}^{\mu_3} \Res_{\{e\}}^{\mu_2}\chi)|_{\B\mu_3}
 =  ({\sgn} + V_2)|_{\B S_3} + V_3|_{B\mu_3},
\]
where ${\sgn}$ is the sign representation on $S_3$, and $V_2$ is the
two-dimensional irreducible representation, and $V_3$ is the regular representation
of $\mu_3$.  
The character of
$\psi^2({\sgn} + V_2)$ has value $3$ at the
identity and at the conjugacy class of a 2-cycle, and it has value 0 on
3-cycles. On the other hand, $\psi^2(\chi) = \psi^2(1) = 1$ in
$R(\mu_2)$, so 
\[
\psi^2(\chi|_{\B\mu_2}) \star \psi^2(1|_{\B\mu_2})  =  1|_{B\mu_2} \star 1|_{B\mu_2}
 =  (1 + V_2)|_{\B S_3} + V_3|_{\B\mu_3}.
\]
The character of $1+ V_2$ has value 1 on
2-cycles, so $\psi^2({\sgn} + V_2) \neq (1 + V_2)$. Therefore, 
$$\psi^2(\chi|_{\B\mu_2} \star 1_{\B\mu_2}) \neq \psi^2 \chi|_{\B\mu_2} \star
\psi^2 1|_{\B\mu_2}.$$
\end{ex}

\section{$\lam$-positive elements,
the inertial dual, and inertial Euler classes}\label{sec:positive}

Every $\lam$-ring contains the semigroup of \emph{$\lam$-positive
elements}, which is an invariant of the $\lam$-ring structure. 
In the case of ordinary equivariant K-theory, every class of a rank $d$
vector bundle is a $\lam$-positive element, although the converse need not be true.
Nevertheless, $\lam$-positive elements of degree $d$ share many of the same
properties as classes of rank $d$-vector bundles; for example, they have a top Chern
class in Chow theory and an Euler class in K-theory. This is because the ordinary Chern character and Chern classes are compatible
with the $\lam$-ring and $\psi$-ring structures. 

In this section, we will introduce the framework to investigate the
$\lam$-positive elements of inertial K-theory for strongly Gorenstein
inertial pairs.  We will see that the $\lam$-positive elements of degree $d$
in inertial K-theory satisfy the inertial versions of these properties. 
We will also introduce a notion of duality for inertial K-theory, which is
necessary to define the inertial Euler class in inertial K-theory.

For the examples $\pro(1,2)$ and $\pro(1,3)$, we will see that the
set of $\lam$-positive elements yield 
integral structures
on inertial K-theory and inertial Chow theory  which will correspond, under a kind of mirror
symmetry,  
to the usual integral structures on ordinary K-theory and Chow theory of an associated crepant resolution of the 
orbifold cotangent bundle.

\begin{rem}
All results in this section hold for possibly non-diagonalizable $G$, provided
that $K_G(\IGX)$ is replaced by the augmentation completion of its
Gorenstein subring $\KGor_G(\IGX)$.
\end{rem}

We begin by defining the appropriate notion of duality for inertial K-theory.
\begin{df}  Consider the inertial K-theory
  $(K_G(\IGX),\star,1,\ovaug,\psit)$ of a strongly Gorenstein pair
  $(\R,\cs)$ associated to 
  a proper action of a diagonalizable group $G$ on $X$. 
  The \emph{inertial dual}  is the map $\Dt:K_G(\IGX) \to K_G(\IGX)$  defined by
\[
\Dt(\cv) := \cv^\dagger := \cv^*\cdot\rho(\cs^*),
\]
where 
\begin{equation}\label{eq.rhodef}
\rho(\cf) := (-1)^{\aug(\cf)} \det(\cf^*)
\end{equation} 
for all classes of locally free sheaves $\cf$ in $K_G(\IGX)$, and $\det(\cf) 
= \lambda^{\aug(\cf)} \cf $ 
is  the class of the usual determinant line bundle of $\cf$.  
Note that in this definition the dual ($^*$), as well as both $\aug$ and $\det$, are the usual, non-inertial forms.
\end{df}
\begin{thm}\label{prop.inertialdual} Consider the inertial K-theory
  $(K_G(\IGX),\star,1,\ovaug,\psit)$ of a strongly Gorenstein pair
  $(\R,\cs)$ for a diagonalizable group $G$ with a proper action on $X$.
\begin{enumerate}
\item \label{dual.one} $\Dt^2$ is the identity map, i.e., $\cf^{\dagger\dagger} = \cf$ for all $\cf\in K_G(\IGX)$.
\item \label{dual.two} For all $\ell\geq 1$, the inertial dual satisfies
\begin{equation}
\Dt\circ\ovaug = 
\ovaug\circ\Dt = \ovaug \quad  \mathrm{and} \quad \psit^\ell\circ\Dt  =
\Dt\circ\psit^\ell. 
\end{equation}
\item \label{dual.three} The inertial dual is a homomorphism of unital rings.
\end{enumerate}
\end{thm}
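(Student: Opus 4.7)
The plan is to verify each of the three parts in turn by direct K-theoretic computation, with the splitting principle, Grothendieck-Serre duality for closed immersions, and the defining relation $\R = e_1^*\cs + e_2^*\cs - \mu^*\cs + T\mu$ of an inertial pair as the main tools. The diagonalizability hypothesis enters only in part (3), where we use the lemma from the proof of Theorem~\ref{thm:OrbifoldPsiRing} that $\mu$ restricts to a closed immersion on each component of $\II_GX$.

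For (1), I would compute
\[
\Dt^2(\cf) = (\cf^*\cdot\rho(\cs^*))^*\cdot\rho(\cs^*) = \cf\cdot\rho(\cs^*)^*\cdot\rho(\cs^*),
\]
and since $\rho(\cs^*) = (-1)^{\aug(\cs)}\det(\cs)$ is a signed line bundle, the factor $\rho(\cs^*)^*\cdot\rho(\cs^*) = \det(\cs)^*\cdot\det(\cs) = \Oc$. For the first half of (2), the two augmentation identities rest on the fact that in the strongly Gorenstein case $\cs$ is a genuine vector bundle, so $\age_\cs(U) = \rk(\cs|_U) = 0$ forces $\cs|_U = 0$ and hence $\rho(\cs^*)|_U = \Oc_U$; since $\ovaug$ is supported on the age-$0$ sectors, both $\Dt\circ\ovaug$ and $\ovaug\circ\Dt$ collapse to $\ovaug$. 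For commutation of $\Dt$ with $\opsi^\ell = \psi^\ell(-)\cdot\theta^\ell(\cs^*)$, unwinding the definitions reduces the claim to the identity
\[
\theta^\ell(\cs^*)^*\cdot\rho(\cs^*) = \psi^\ell(\rho(\cs^*))\cdot\theta^\ell(\cs^*),
\]
which, upon formally splitting $\cs = \sum_i\cl_i$, reduces to the observation that both sides equal $(-1)^{\rk\cs}\prod_i(\cl_i + \cl_i^2 + \cdots + \cl_i^\ell)$.

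The main content is (3). Applying Grothendieck-Serre duality for the closed immersion $\mu$,
\[
(\mu_* E)^* = (-1)^{\rk N_\mu}\,\mu_*\!\left(E^*\cdot\det N_\mu\right),
\]
to $\cv\star\cw = \mu_*(e_1^*\cv\cdot e_2^*\cw\cdot\lambda_{-1}(\R^*))$, together with $\lambda_{-1}(\R^*)^* = \lambda_{-1}(\R) = (-1)^{\rk\R}\det(\R)\,\lambda_{-1}(\R^*)$, yields
\[
\Dt(\cv\star\cw) = (-1)^{\rk N_\mu+\rk\R}\,\mu_*\!\left(e_1^*\cv^*\cdot e_2^*\cw^*\cdot\det(\R)\det(N_\mu)\lambda_{-1}(\R^*)\right)\cdot\rho(\cs^*).
\]
The inertial pair relation, together with $T\mu = -N_\mu$, gives the key K-theoretic identity
\[
\det(\R)\cdot\det(N_\mu) = e_1^*\det(\cs)\cdot e_2^*\det(\cs)\cdot\mu^*\det(\cs)^{-1},
\]
and the rank identity $\rk N_\mu + \rk\R = \age_\cs(\Psi_1) + \age_\cs(\Psi_2) - \age_\cs(\Psi_{12})$. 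After moving $\mu^*\det(\cs)^{-1}$ outside $\mu_*$ via the projection formula and combining it with $\rho(\cs^*)|_{I(\Psi_{12})} = (-1)^{\age_\cs(\Psi_{12})}\det(\cs)$, the overall sign simplifies to $(-1)^{\age_\cs(\Psi_1)+\age_\cs(\Psi_2)}$, which is exactly the sign produced in $\Dt(\cv)\star\Dt(\cw)$ by the pullbacks $e_i^*\rho(\cs^*)$. Additivity of $\Dt$ is clear, and $\Dt(\one) = \one$ follows from $\cs|_{X^1} = 0$. The hard part is the sign-and-determinant bookkeeping in the computation above; everything hinges on correctly extracting the identity $\det(\R)\det(N_\mu) = e_1^*\det(\cs)\cdot e_2^*\det(\cs)\cdot\mu^*\det(\cs)^{-1}$ from the defining relation of an inertial pair.
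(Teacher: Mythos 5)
Your proof is correct and takes essentially the same approach as the paper's. Parts~(1) and~(2) match the paper's argument: for~(1) the paper uses the identity $\rho(\cf^*)=\rho(\cf)^{-1}$, equivalent to your cancellation $\rho(\cs^*)^*\cdot\rho(\cs^*)=\Oc$; for the $\opsi$-commutation in~(2) the paper invokes the splitting-principle identity $\theta^n(\cs)=\theta^n(\cs^*)\det(\cs)^{n-1}$ (the paper's stated exponent $\aug(\cs)-1$ is a typo; the correct exponent is $n-1$, and your reformulation $\theta^\ell(\cs^*)^*\cdot\rho(\cs^*)=\psi^\ell(\rho(\cs^*))\cdot\theta^\ell(\cs^*)$, with both sides equal to $(-1)^{\rk\cs}\prod_i(\cl_i+\cdots+\cl_i^\ell)$ after splitting, is exactly the intended content). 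For part~(3), the paper simply remarks that one repeats the proof that $\opsi^n$ is a ring homomorphism, replacing $\theta^n$ by $\rho$ and Adams--Riemann--Roch by its Lemma~\ref{lm.ARRdual}, namely $(\iota_*\cf)^*=\iota_*\bigl(\rho(N_\iota^*)\cdot\cf^*\bigr)$. Your formula $(\mu_*E)^*=(-1)^{\rk N_\mu}\mu_*(E^*\cdot\det N_\mu)$ is precisely that lemma after unwinding $\rho(N_\mu^*)=(-1)^{\rk N_\mu}\det N_\mu$; the only methodological difference is that you reach it via Grothendieck--Serre duality for the regular closed embedding $\mu$, whereas the paper derives it from K\"ock's ``Riemann--Roch without denominators'' combined with $\lambda_{-1}(\cf)^*=\lambda_{-1}(\cf)\rho(\cf)$. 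Both routes are sound. Your explicit sign-and-determinant bookkeeping --- in particular the identity $\det(\R)\det(N_\mu)=e_1^*\det(\cs)\cdot e_2^*\det(\cs)\cdot\mu^*\det(\cs)^{-1}$ extracted from the inertial-pair relation, and the rank identity feeding the overall sign $(-1)^{\age_\cs(\Psi_1)+\age_\cs(\Psi_2)}$ --- is exactly what the paper leaves implicit in its ``the proof is identical'' remark, and it checks out.
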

Before we give the proof of the theorem, we need to recall one fact from \cite{FuLa:85} about the ordinary dual in K-theory, and we need to prove a Riemann-Roch type of result for the ordinary dual.
\begin{lm}[\protect{\cite[I Lemma 5.1]{FuLa:85}}]\label{lm.FLlamwithdual}
Let $\cf$ be any locally free sheaf of rank $d$. Then for all $i$ with $0\le i \le d$ we have
\begin{equation}
\lambda^i(\cf) = \lambda^{d-i}(\cf^*)\lambda^d(\cf).
\end{equation}
\end{lm}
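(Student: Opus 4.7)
The plan is to prove this identity by reducing to the case of a sum of line bundles via the splitting principle, where both sides become elementary symmetric polynomials that manifestly agree.

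First, I would invoke the splitting principle for $G$-equivariant K-theory: after pulling back to a suitable flag bundle (on which $K$-theoretic pullback is injective), we may assume $\cf = L_1 \oplus L_2 \oplus \cdots \oplus L_d$ for line bundles $L_1, \ldots, L_d$. Under this decomposition we have $\cf^* = L_1^* \oplus \cdots \oplus L_d^*$, and the formula $\lambda_t(\cf) = \prod_{j=1}^d (1 + t L_j)$ gives
\[
\lambda^i(\cf) = e_i(L_1, \ldots, L_d) = \sum_{\substack{S \subseteq \{1,\ldots,d\} \\ |S| = i}} \prod_{j \in S} L_j,
\]
where $e_i$ denotes the $i$-th elementary symmetric polynomial. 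In particular, $\lambda^d(\cf) = L_1 L_2 \cdots L_d$ is the determinant line bundle.

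Next, I would compute the right-hand side directly in these variables:
\[
\lambda^{d-i}(\cf^*)\,\lambda^d(\cf) = \Bigl(\sum_{\substack{T \subseteq \{1,\ldots,d\} \\ |T| = d-i}} \prod_{j \in T} L_j^*\Bigr) \cdot \prod_{k=1}^d L_k = \sum_{\substack{T \subseteq \{1,\ldots,d\} \\ |T| = d-i}} \prod_{k \notin T} L_k.
\]
The substitution $S := \{1,\ldots,d\} \setminus T$ gives a bijection between $(d-i)$-subsets $T$ and $i$-subsets $S$, and turns the last expression into $\sum_{|S|=i} \prod_{k \in S} L_k = \lambda^i(\cf)$, matching the left-hand side.

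There is essentially no obstacle here; this is a formal computation with elementary symmetric polynomials once the splitting principle is applied. The only point requiring mild care is that the identity to be checked is a polynomial identity in the Chern roots $L_1, \ldots, L_d$, so it suffices to verify it after pullback to a space where $\cf$ splits, which is legitimate because $\lambda$-operations and the dual commute with pullback, and the pullback map in question is injective by the splitting principle. Hence the identity descends to the original $K_G(\IGX)$, completing the proof.
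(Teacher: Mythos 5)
Your argument is correct. The paper does not prove this lemma itself --- it simply cites it from Fulton--Lang \cite[I Lm 5.1]{FuLa:85} --- and your splitting-principle computation (reducing to $\cf = L_1\oplus\cdots\oplus L_d$ and matching $e_i(L_1,\dots,L_d)$ with $e_{d-i}(L_1^{-1},\dots,L_d^{-1})\cdot L_1\cdots L_d$ via complementation of index sets) is exactly the standard proof of that cited result. The only cosmetic remark is that the bound ``$0\le i\le m$'' in the statement is a typo for ``$0\le i\le d$'', which is the range your proof covers.
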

\begin{thm}[Riemann-Roch for the Ordinary Dual]\label{thm.ARRdual}
Using the hypotheses and notation from Theorem
\ref{thm:ARR}, and using the definition of $\rho$ given in Equation~\eqref{eq.rhodef}, for all $\cf$ in $K_G(Y)$ we have
\begin{equation}\label{eq:ARRdual} 
(\iota_*(\cf))^* = \iota_*(\rho(N_\iota^*)\cdot \cf^*).
\end{equation}

\end{thm}
\begin{proof}
We first observe, using Lemma~\ref{lm.FLlamwithdual}, that for any locally free sheaf $\cf \in K_G(Y)$ we have
\begin{equation}\label{eq.lambdadual}
\lam_{-1}(\cf)^* = \lam_{-1}(\cf)  \rho(\cf).
\end{equation}
We also observe that ordinary dualization commutes with pullback and is a ring homomorphism.  Because of these properties, the ordinary dual is a so-called \emph{natural operation}, and the desired result follows immediately from K\"ock's ``Riemann-Roch theorem without denominators'' \cite[Satz 5.1]{Koc:91}.
\end{proof}

\begin{proof}[Proof (of Theorem~\ref{prop.inertialdual})]
Part \ref{dual.one}  follows from the identity $\rho(\cf^*) = (\rho(\cf))^{-1}$. 

The first Equation of Part \ref{dual.two} is follows from the definition of
$\ovaug$. The second Equation of Part \ref{dual.two} follows from the
identity $\theta^n(\cs) = \theta^n(\cs^*) (\det(\cs))^{\aug(\cs)-1}$, which follows
from the splitting principle in ordinary K-theory. 

The proof of Part \ref{dual.three} 
is  identical to the proof 
that $\psit^n$ is a homomorphism for all $n\geq 1$, but where the Bott class
$\theta^n$ is replaced by the class $\rho$, and the Adams-Riemann-Roch Theorem
\ref{thm:ARR} is replaced by Theorem~\ref{thm.ARRdual}.  

\end{proof}

\begin{df}\label{df:LamPositive}
Let $(K,\cdot,1,\lam)$ be a $\lam$-ring.
For any integer $d \geq0$, an element  $\cv \in K$ is said to have \emph{$\lam$-degree $d$} if $\lam_t(\cv)$ is a degree-$d$ polynomial in $t$. The element $\cv$ is said to be a
\emph{$\lam$-positive element of degree $d$ of $K$} if it has $\lam$-degree
$d$ for $d\geq 1$ and  $\lam^d(\cv)$ is a unit of $K$. A $\lam$-positive element of degree
$1$ is said to be a \emph{$\lam$-line element of $K$}.  Let 
$\cP_d := \cP_d(K)$
be the set of $\lam$-positive elements of degree $d$ in $K$, 
and let $\cP = \sum_d \cP_d \subset K$ be the semigroup of positive elements.
\end{df}
\begin{rem}
If the $\lam$-ring $(K,\cdot,1,\lam)$ has an involutive homomorphism
$K\to K$ taking $\cf$ to $\cf^\triangledown$ that commutes with $\lam^i$ for all
$i\geq 0$, then it may be useful 
in the definition of a $\lam$-positive
element of degree $1$ to assume, in addition, that 
$\cv^{-1} = \cv^{\triangledown}$.
However, we will later see that this condition  
automatically holds 
for the virtual K-theory of 
$\B\mu_2$ (Proposition \ref{prop.bmutwo}) , $\pro(1,2)$ (Proposition
\ref{prop.p12line}), and $\pro(1,3)$ (Proposition \ref{prop:POneThreeVirtualK}).
\end{rem}

\begin{prop}\label{prop:lambda}
 Let $(K,\cdot,1,\lam)$ be a $\lam$-ring. 
\begin{enumerate}
\item \label{lambda.one} Addition in $K$ induces a map $\cP_{d_1}\times\cP_{d_2}\to
  \cP_{d_1+d_2}$ for all integers $d_1,d_2\geq 1$.
\item \label{lambda.two} Multiplication in $K$ induces a map $\cP_{d_1}\times\cP_{d_2}\to
  \cP_{d_1 d_2}$ for all integers $d_1,d_2\geq 1$. In particular, the set
  $\cP_1$ of $\lam$-line elements of $K$ forms a group. 
\item \label{lambda.three} 
If $K$ is torsion free, then an element $\cl$ in $K$ 
has $\lam$-degree $1$
if and only if
\begin{equation}\label{eq:psilineelt}
\psi^\ell(\cl) = \cl^\ell
\end{equation}
for all integers $\ell\geq 1$.
\item For all  $\cv$ in $\cP_d$,
\begin{equation}\label{eq:ovgavectorbundle}
\gamma_t(\cv-d) = \sum_{i=0}^d t^i (1-t)^{d-i} \lam^i(\cv).
\end{equation}
\item \label{lambda.four.a} 
For all integers
  $i\geq 0$ and $d\geq 1$, we have $\lam^i:\cP_d\to \cP_{{d\choose i}}$. 
Furthermore, 
if $K$ is an augmented
$\lam$-algebra over $\nq$
 with augmentation $\aug$, and
if $\cv$ belongs to $\cP_d$, then in $K$
\begin{equation}\label{eq:lambda.four.b}
\aug(\cv) = d,
\end{equation}
 and thus, 
\begin{equation}\label{eq:OAugOlami}
\aug(\lam^i(\cv)) = {d\choose i}. 
\end{equation}

\end{enumerate}
\end{prop}
\begin{proof}
Part \ref{lambda.one} follows from the fact that the product of invertible elements is
invertible. Part  \ref{lambda.two} follows from properties of the universal polynomials
$\PP_n$ appearing in Equation \eqref{lamring.Pn} of the definition of a $\lam$-ring. Part  \ref{lambda.three}
follows immediately from Equation~(\ref{eq:PsiFromLambda})
and the fact that $K$ is torsion free.

Equation~(\ref{eq:ovgavectorbundle}) holds since for all $\cv$ in $\cP_d$, we have 
\begin{equation*}
\gamma_t(\cv - d) = \frac{\lam_{\frac{t}{1-t}}(\cv)}{(1-t)^{-d}} 
= (1-t)^d \sum_{i=0}^d \left(\frac{t}{1-t}\right)^i \lam^i(\cv)
= \sum_{i=0}^d t^i (1-t)^{d-i} \lam^i(\cv).
\end{equation*}

To prove Part  \ref{lambda.four.a},
the properties of the universal polynomials $\PP_{m,n}$ 
(see Remark~(\ref{rem:UniversalPoly})) imply that 
$\lam^i:\cP_d\to\cP_{{d \choose i}}$ for all $i\geq 0$. 
Hence, if $\cv$ has $\lam$-degree $d$ where $d,i\geq 1$, then since $\lam^d\cv$ is invertible, so is $\lam^{{d\choose i}}(\lam^i(\cv))=(\lam^d\cv)^{{d-1\choose i-1}}$.

To prove Equation~(\ref{eq:lambda.four.b}) let us first suppose that $\cf := \cl$ belongs to $\cP_1$.
Applying $\aug$ to Equation (\ref{eq:psilineelt}) for $\ell=2$, we obtain
$\aug(\psi^2(\cl)) = \aug(\cl^2) = \aug(\cl)^2$, but $\aug(\psi^2(\cl)) =
\aug(\cl)$.  Thus, $\aug(\cl)^2 = \aug(\cl)$, but since $\cl$ is invertible and
$\aug$ is a homomorphism of unital rings, $\aug(\cl)$ is invertible. Therefore,
$\aug(\cl) = 1$. 
More generally, if $\cf$ belongs to $\cP_d$ for some integer
$d\geq 1$, then Equation~(\ref{eq:AugLambda}) implies that ${\aug(\cf) \choose
  d } = 1$, and 
\[
0 = {\aug(\cf)\choose d+1} = {\aug(\cf)\choose d} \frac{\aug(\cf)-d}{d+1} = \frac{\aug(\cf)-d}{d+1}.
\]
Therefore, $\aug(\cf)=d$.  

Finally, Equation (\ref{eq:OAugOlami})
follows from  Equations~(\ref{eq:AugLambda}) and (\ref{eq:lambda.four.b}).
\end{proof}

In ordinary equivariant K-theory $(K_G(X),\otimes,1,\aug)$, it is often useful
to assume that $[X/G]$ is connected. This is not an actual restriction, since
$K_G(X)$ can be expressed as the direct sum of $\lam$-rings or $\psi$-rings of
the form  $K_G(U)$, where $[U/G]$ is a connected component of $[X/G]$. The
condition that $[X/G]$ is connected is equivalent to the condition that the
image of the 
augmentation
is $\nz$ times the unit element $1$, i.e., one may
interpret the augmentation as 
a map
$\aug:K_G(X)\to\nz$. 

For an inertial K-theory $(K_G(\IGX),\pd ,1,\ovaug)$, an additional condition must
be imposed in order for the inertial augmentation to have 
image equal to $\nz$.
\begin{df}
Let $X$ be an algebraic space with an action of $G$, 
and let $(\R,\cs)$ be an inertial pair.  For each $m\in G$ the restriction of $\cs$ to $X^m$ is denoted by $\cs_m$.

We say that the action of $G$ on $X$ 
is \emph{reduced} 
with respect to the inertial pair $(\R,\cs)$ 
if $\cs_m = 0$ implies $m = 1$.
\end{df}
The following Proposition is immediate.
\begin{prop}
Consider the inertial K-theory $(K_G(\IGX),\pd,1,\ovaug)$
(respectively the rational inertial K-theory $(K_G(\IGX)_\nq,\pd ,1,\ovaug)$) 
for some inertial pair $(\R,\cs)$.
The image of the inertial augmentation $\ovaug$ is equal to $\nz$ (respectively $\nq$)
times the unit element $1$ of $K_G(\IGX)$ if and only if $[X/G]$ is connected
and the action of $G$ on $X$ is reduced
with respect to $(\R,\cs)$.
\end{prop}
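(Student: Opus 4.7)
The plan is to unpack the definition of the inertial augmentation $\ovaug$ and identify its image as the $\nz$-span (resp.\ $\nq$-span) of the structure sheaves $\co_U$ indexed by those connected components $U$ of $\IGX$ on which $\age_\cs(U)=0$. Once this description is in hand, the proposition reduces to counting age-zero components and comparing with the requirement that the image equal $\nz\cdot 1$ (resp.\ $\nq\cdot 1$), where $1=\co_X$ is supported on the untwisted sector.

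First I would recall the defining formula $\ovaug(\res{\cf}{U})=\inCh^0(\res{\cf}{U})\,\co_U$ and combine it with the Remark following Definition~\ref{df.inertialpair} that $\res{\inCh^0(\cf)}{U}$ coincides with the classical rank when $\age_\cs(U)=0$ and vanishes otherwise. This yields that $\im(\ovaug)$ is precisely the free $\nz$-module generated by $\{\co_U\}$ as $U$ ranges over the age-zero components of $\IGX$. Moreover, the untwisted sector $X\subseteq\IGX$ is automatically an age-zero locus: axiom (b) of Definition~\ref{df.inertialpair} together with the identity $\R=e_1^*\cs+e_2^*\cs-\mu^*\cs+T_\mu$ forces $\cs|_{X^1}=0$, exactly as used in the proof of Lemma~\ref{lem.mm-1}.

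For the forward implication I would argue contrapositively. If $[X/G]$ is disconnected, the untwisted sector itself breaks into at least two connected components of $\IGX$, each contributing a distinct $\co_U$; since distinct structure sheaves are $\nz$-linearly independent in $K_G(\IGX)$, the image strictly contains $\nz\cdot 1$. Similarly, if the action is not reduced, then $\cs_m=0$ for some $m\neq 1$, so the sector $I(\Psi)$ of $m$ is an age-zero component distinct from the untwisted one, contributing an additional independent generator $\co_{I(\Psi)}$ outside $\nz\cdot 1$. Conversely, if $[X/G]$ is connected and the action is reduced, then $X$ is the unique age-zero component of $\IGX$, so the image collapses to $\nz\cdot\co_X=\nz\cdot 1$. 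The rational case is identical after tensoring with $\nq$.

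The only mildly subtle point, rather than a genuine obstacle, is the implication $\rk(\cs|_U)=0\Rightarrow \cs|_U=0$ used in the non-reduced case. This is where non-negativity of $\cs$ is essential: a non-negative class in $K_G(\IGX)_\nq$ with vanishing rank on a component $U$ must itself vanish on $U$, since it is a non-negative $\nq$-combination of classes of vector bundles whose ranks are non-negative integers, and only the zero combination on $U$ can have total rank zero there.
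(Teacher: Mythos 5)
The paper declares this Proposition ``immediate'' and offers no proof, so there is no reference argument to compare against; your write-up is the natural spelling-out and is essentially correct. One misattribution, though: the observation $\rk(\cs|_U)=0\Rightarrow\cs|_U=0$ is not what is needed in the \emph{non-reduced} case — there you start from $\cs_m=0$ and conclude rank zero, which is trivial. It is the \emph{converse} direction (connected and reduced $\Rightarrow$ image is $\nz\cdot 1$) that requires it: from $\cs_m\neq 0$ you must deduce $\age_\cs(U)=\rk(\cs)|_U>0$ on each twisted component, and that is exactly the contrapositive of your subtle point. A second, milder imprecision (inherited from the paper's own notation): you identify the sector $I(\Psi)$ with a single connected component of $I\ix$. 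If $[I(\Psi)/G]$ were disconnected, $\cs_m\neq 0$ would not rule out $\cs$ vanishing on one of its components, and the converse would fail; the argument really requires ``$\cs|_U\neq 0$ for every connected component $U$ outside $X^1$,'' which is presumably what the authors intend by ``reduced.''
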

In ordinary equivariant K-theory 
any vector bundle
of rank $d$ has $\lam$-degree $d$.
Thus,
if $[X/G]$ is connected, then by definition,
$(K_G(X),\cdot,1,\aug,\lam)$ (respectively $(K_G(X)_\nq,\cdot,1,\aug,\lam)$) is
generated as a group (respectively $\nq$-vector space) by the classes of vector 
bundles and hence by elements of $\cP$. 

In inertial K-theory $(K_G(\IGX)_\nq,\pd,1,\ovaug,\olam)$, the situation is more complicated.
Equation~(\ref{eq:OAugOlami}) implies that if $\cv$ is in $\cP_d$, then
for any any connected component $U$ of 
$\IGX \smallsetminus X^1$
which has $\cs$-age equal to
$0$, the restriction $\res{\cv}{U}$, must have ordinary rank equal to $0$ on $U$. 
Therefore, the $\nq$-linear span of $\cP_d$ cannot be equal to $K_G(\IGX)_\nq$.
Furthermore, even if $[X/G]$ is connected and the action of $G$ on $X$ is
reduced
with respect to the inertial pair $(\crr,\cs)$,
there is no \emph{a priori}  reason   
that 
$(K_G(\IGX)_\nk,\cdot,1,\ovaug,\olam)$
is generated as a 
$\nk$-vector space by its $\lam$-positive elements for any field
$\nk$ containing $\nq$.

\begin{crl}
The Gorenstein subring
$(\Ks_G(\IGX)_\nq,\star,1,\olam)$ is a $\lam$-subring of the inertial K-theory
which is preserved by the inertial dual.
\end{crl}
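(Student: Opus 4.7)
The plan is to verify four properties: (a) $1 \in \Ks_G(\IGX)_\nq$; (b) closure under the inertial product $\star$; (c) closure under each $\olam^i$; and (d) invariance under the inertial dual $\Dt$. Property (a) is immediate since $1$ is a $\lam$-line element, so $1 \in \cP_1 \subseteq \Ks_G(\IGX)_\nq$. Property (b) follows at once from Proposition~\ref{prop:lambda}, which gives $\cP_{d_1} \star \cP_{d_2} \subseteq \cP_{d_1 d_2}$; extending $\nq$-bilinearly, $\Ks_G(\IGX)_\nq$ is closed under $\star$.

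For property (d), Theorem~\ref{prop.inertialdual} shows that $\Dt$ is a unital ring homomorphism with $\Dt^2 = \mathrm{id}$ and $\opsi^\ell \circ \Dt = \Dt \circ \opsi^\ell$ for all $\ell \geq 1$. Since $\olam_t$ is defined from $\opsi$ by the formula~\eqref{eq:oLambdaFromoPsi}, which uses only $\nq$-linear combinations, the inertial exponential $\oexp$, and ring operations, $\Dt$ commutes with each $\olam^i$ as well. Hence for $\cv \in \cP_d$, we have $\olam^i(\Dt \cv) = \Dt(\olam^i \cv)$, which vanishes for $i > d$ and is invertible for $i = d$ (as the image under a ring homomorphism of the invertible element $\olam^d(\cv)$). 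Therefore $\Dt(\cP_d) \subseteq \cP_d$, and extending $\nq$-linearly gives $\Dt(\Ks_G(\IGX)_\nq) \subseteq \Ks_G(\IGX)_\nq$, with equality from $\Dt^2 = \mathrm{id}$.

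The heart of the argument is property (c). For $\cv \in \cP_d$, Proposition~\ref{prop:lambda} gives $\olam^i(\cv) \in \cP_{\binom{d}{i}} \subseteq \Ks_G(\IGX)_\nq$. To handle a general element $\cv = \sum_j a_j v_j$ of $\Ks_G(\IGX)_\nq$ with $v_j \in \cP$ and $a_j \in \nq$, I would exploit the multiplicativity of $\olam_t$ together with the $\nq$-algebra axiom~\eqref{eq:kAlgLambda} to write
\[
\olam_t(\cv) \;=\; \prod_j \olam_t(a_j v_j) \;=\; \prod_j \oexp\!\left( a_j \log \olam_t(v_j) \right).
\]
For each fixed $i \geq 0$, the coefficient of $t^i$ in this product is a \emph{finite} $\nq$-polynomial expression in the quantities $\olam^k(v_j)$ with $k \leq i$. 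Since each $\olam^k(v_j)$ lies in $\cP$ (by the case already treated), and $\cP$ is closed under the $\star$-product by (b), this coefficient is a $\nq$-linear combination of $\star$-products of $\lam$-positive elements, hence lies in $\Ks_G(\IGX)_\nq$.

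The main subtlety in (c) is that when $a_j \notin \nz_{\geq 0}$, the factor $\olam_t(a_j v_j) = \olam_t(v_j)^{a_j}$ is not a polynomial but a genuine power series in $t$, so it is not immediately clear that $\olam^i(\cv)$ stays inside $\Ks_G(\IGX)_\nq$ rather than its augmentation completion. The resolution is that, because the constant term of $\olam_t(v_j)$ is $1$, only finitely many terms of the Taylor series of $\oexp$ and $\log$ contribute to the coefficient of $t^i$ for each fixed $i$; thus $\olam^i(\cv)$ is produced by a finite combinatorial operation on elements of $\cP$ and genuinely lies in $\Ks_G(\IGX)_\nq$, without any completion-theoretic input.
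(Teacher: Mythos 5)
Your proof is correct and follows the same overall route as the paper's, which is a one-line citation of Parts~\ref{lambda.two} and~\ref{lambda.four} of Proposition~\ref{prop:lambda} together with the (unproved) assertion that the inertial dual preserves each $\cP_d$. You supply substance in two places the paper leaves implicit. In step~(d) you actually derive $\Dt(\cP_d)\subseteq\cP_d$ from Theorem~\ref{prop.inertialdual}, by showing that $\Dt$, being a ring homomorphism commuting with $\opsi^\ell$, commutes with each $\olam^i$, so that degree and invertibility of the top $\olam$ are preserved. In step~(c) you address the real subtlety: Part~\ref{lambda.four} only controls $\olam^i$ on individual elements of $\cP$, while $\Ks_G(\IGX)_\nq$ is the $\nq$-span of $\cP$, so one must handle rational, possibly non-integer, coefficients; your appeal to the multiplicativity of $\olam_t$, the $\lam$-algebra axiom~\eqref{eq:kAlgLambda}, and the observation that the constant term $1$ of each $\olam_t(v_j)$ forces the degree-$i$ coefficient to be a finite $\nq$-polynomial in $\star$-products of $\lam$-positive elements correctly closes this gap. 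This is a more careful verification than the paper's terse proof gives.
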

\begin{proof}
The proof follows from Part 2 and Part 4 of Proposition \ref{prop:lambda}
and the fact that the inertial dual maps $\cP_d\mapsto\cP_d$ for all $d$.
\end{proof}

One thing that makes the elements $\cP_d$ in
$(K_G(\IGX)_\nq,\cdot,1,\ovaug,\olam)$ interesting is that in many ways they
behave as though they were rank-$d$ vector bundles. In particular, they have
inertial Euler classes in both K-theory and Chow rings.

\begin{prop}
Let $(K_G(\IGX)_\nq,\pd,1,\ovaug,\olam)$ be the inertial K-theory of a
strongly Gorenstein pair $(\R,\cs)$ associated to a diagonalizable group $G$ with a proper action on $X$.
\begin{enumerate} 
\item The inertial Chern class $\ocv^1:\cP_1\to A^\bra{1}_G(\IGX)_\nq$ is
  a group homomorphism. 
\item For all  $\cv$ in $\cP_d$ and $\cl$ in $\cP_1$,
\begin{equation}\label{eq:ovChlineovexp}
\ovCh(\cl) = \ovexp(\ocv^1(\cl)),
\end{equation}
and
\begin{equation}\label{eq:ovcvectorbundle}
\ocv_t(\cv) = \sum_{i=0}^d \ocv^i(\cv) t^i,
\end{equation}
so $\ocv^i(\cv) = 0$ for all $i > d$.
\end{enumerate}
\end{prop}
\begin{proof}
Part 1 follows from the fact that for all $\cl_1$ and $\cl_2$ in $\cP_1$, 
$\ovCh(\cl_1\star \cl_2) = \ovCh(\cl_1)\star \ovCh(\cl_2)$. 
Picking off terms in
$A^\bra{1}_G(\IGX)_\nq$ and using $\ovCh^1 = \ovc^1$ and Equation
(\ref{eq:OAugOlami}) yields the desired result.

Equation~(\ref{eq:ovChlineovexp}) follows from
Equations~(\ref{eq:OChernSeries}) and (\ref{eq:ovcvectorbundle}), which yields
\[
1+t \ovc^1(\cl) = \ovexp\left(\sum_{n\geq 1} (-1)^{n-1} (n-1)! t^n \ovCh^n(\cl)\right),
\]
which implies that $\ovCh^n(\cl) = \ovc^1(\cl)^n/n!$, as desired.
Equation (\ref{eq:ovcvectorbundle}) follows from
Equations~(\ref{eq:oChernGammaConsistency}) and (\ref{eq:ovgavectorbundle}).
\end{proof}

The inertial dual allows us to introduce a generalization of the Euler class.
\begin{df}
\label{df:InertialEuleK} 
Let 
  $(K_G(\IGX)_\nq,\pd,1,\ovaug,\olam)$ be the inertial K-theory associated to $(\R,\cs)$. Let $\cv$
  belong to $\cP_d$.
The \emph{inertial Euler class in $K_G(\IGX)_\nq$} of $\cv$ is  
  $$\olam_{-1}(\cv^\dagger) = \sum_{i=0}^d (-1)^i \olam^i(\cv^\dagger).$$ 
 The \emph{inertial Euler class
of $\cv$ in $A^{\bra{d}}_G(\IGX)_\nq$} is defined to be $\ovc^d(\cf)$. 
\end{df}
The inertial Euler classes are multiplicative by Part \ref{lambda.one} of
Proposition \ref{prop:lambda} and the multiplicativity of $\ovc_t$ and $\olam_t$.

Finally, we observe that $\cP_1$ is preserved by the action of certain
groups. This will be useful in our analysis of the virtual K-theory of $\pro(1,n)$.
\begin{df}
Let $(K,\cdot,1,\psi,\aug)$ be 
a torsion-free,
augmented $\psi$-ring. A \emph{translation group of $K$} is 
an additive subgroup $J$ of $K$ such that for all $n\geq 1$, $j\in J$, and
$x \in K$, the following identities hold:
\begin{enumerate}
\item \label{eq.TransGroupOne} $\psi^n(j) = n j$, 
\item \label{eq.TransGroupTwo} $x \cdot j = \aug(x) j$,
\item \label{eq.TransGroupThree} 
$\aug(x) j \in J.$ 
\end{enumerate}
\end{df}

\begin{prop}\label{prop:JActsOnK}
Let $(K,\cdot,1,\psi,\aug)$ be 
a torsion-free 
augmented $\psi$-ring.
If $J$ is a translation 
subgroup of $K$, then $\aug(J) = 0$, $J^2 = 0$, and $J$ is an ideal of the ring
$K$. Furthermore, $J$ acts freely on $\cP_1$, where $J\times\cP_1\to\cP_1$
is $(j,\cl)\mapsto j+\cl$.  
\end{prop}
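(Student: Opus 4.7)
The plan is to first extract the three ``auxiliary'' conclusions ($\aug(J)=0$, $J^2=0$, $J$ is an ideal) from the defining conditions (\ref{eq.TransGroupOne})--(\ref{eq.TransGroupThree}) of a translation group, and then use these together with the characterization of $\cP_1$ from Proposition~\ref{prop:lambda}(\ref{lambda.three}) to verify that $(j,\cl)\mapsto j+\cl$ lands in $\cP_1$ and defines a free action.

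First, for any $j\in J$ and any $n\geq 2$, condition (\ref{eq.TransGroupOne}) gives $\psi^n(j)=nj$, and since $K$ is an augmented $\psi$-ring the identity $\aug\circ\psi^n=\aug$ from \eqref{eq:AugPsi} yields $\aug(j)=n\aug(j)$, forcing $\aug(j)=0$. Next, condition (\ref{eq.TransGroupTwo}) applied to $x=j_1$ and $j=j_2\in J$ combined with $\aug(J)=0$ immediately gives $J^2=0$. For the ideal property, condition (\ref{eq.TransGroupTwo}) shows $K\cdot J\subseteq\aug(K)\cdot J$, and condition (\ref{eq.TransGroupThree}) says $\aug(K)\cdot J=J$, so $K\cdot J\subseteq J$.

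The main part is to show that for $j\in J$ and $\cl\in\cP_1$ we have $j+\cl\in\cP_1$. By Proposition~\ref{prop:lambda}(\ref{lambda.three}) it suffices to check that $\psi^\ell(j+\cl)=(j+\cl)^\ell$ for every $\ell\geq 1$ and that $j+\cl$ is a unit. Using additivity of $\psi^\ell$, condition (\ref{eq.TransGroupOne}), and $\psi^\ell(\cl)=\cl^\ell$ one obtains $\psi^\ell(j+\cl)=\ell j+\cl^\ell$. On the other hand, expanding $(j+\cl)^\ell$ by the binomial theorem, all terms with $j^k$ for $k\geq 2$ vanish because $J^2=0$; the remaining cross term is $\ell\,j\cl^{\ell-1}$, which by condition (\ref{eq.TransGroupTwo}) equals $\ell\,\aug(\cl^{\ell-1})j=\ell\,\aug(\cl)^{\ell-1}j=\ell j$, since $\aug(\cl)=1$ by Proposition~\ref{prop:lambda}(\ref{lambda.four}) (the case $d=i=1$ of \eqref{eq:OAugOlami}). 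Hence $(j+\cl)^\ell=\cl^\ell+\ell j$, matching $\psi^\ell(j+\cl)$. For invertibility, since $\aug(\cl^{-1})=1$ one has $j\cl^{-1}=j$ by condition (\ref{eq.TransGroupTwo}), so $(\cl+j)\cdot\cl^{-1}(1-j)=(1+j)(1-j)=1-j^2=1$, exhibiting $\cl^{-1}(1-j)$ as a two-sided inverse.

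Finally, freeness of the action is immediate from the group structure of $(K,+)$: the axioms $0+\cl=\cl$ and $(j_1+j_2)+\cl=j_1+(j_2+\cl)$ hold trivially, and $j+\cl=\cl$ forces $j=0$. The only nontrivial step is the verification that $j+\cl\in\cP_1$, which is where the full strength of the translation group axioms is used; everything else reduces to formal manipulations with $\aug$ and $\psi$ in an augmented $\psi$-ring.
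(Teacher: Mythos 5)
Your proof is correct and follows essentially the same route as the paper: the same argument for $\aug(J)=0$ via $\aug\circ\psi^n=\aug$, the same use of conditions (\ref{eq.TransGroupTwo})--(\ref{eq.TransGroupThree}) for $J^2=0$ and the ideal property, and the same verification that $\psi^\ell(j+\cl)=(j+\cl)^\ell$ together with an explicit inverse. You usefully spell out a step the paper glosses over, namely that the binomial cross term $\ell\,j\cl^{\ell-1}$ collapses to $\ell j$ only after invoking condition (\ref{eq.TransGroupTwo}) and $\aug(\cl)=1$, not merely $J^2=0$.
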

\begin{proof} For all $j$ in $J$ and integers $n\geq 1$, $\aug(\psi^n(j)) = \aug(j)$ by the
  definition of an augmented $\psi$-ring. On the other hand, $\aug(\psi^n(j)) =
  \aug(n j) = n \aug(j)$ for all integers $n\geq 1$ by 
  Condition~(\ref{eq.TransGroupOne}) 
  in
  the definition of a translation group. Therefore, $\aug(j) = 0$
  since $K$ is torsion free.
  The fact that $J^2 = 0$
  and $J$ is  an ideal of $K$ follows from 
 Conditions~(\ref{eq.TransGroupTwo}) 
  and (\ref{eq.TransGroupThree})
 in the definition of a translation group.

Consider $\cl$ in $\cP_1$ and $j$ in $J$.  We have 
\[
\psi^n(\cl+j) = \psi^n(\cl)+\psi^n(j) = \cl^n + n j =(\cl+j)^n,
\]
where the second equality is by Equation (\ref{eq:psilineelt}) and 
  Condition~(\ref{eq.TransGroupOne}) 
in the
definition of a translation group, and the last is from the binomial theorem 
and the fact that $J^2 = 0$
since $\aug(\cl) = 1$. 
Hence by Equation \eqref{eq:psilineelt},
$\cl + j$ has $\lam$-degree 1.
Also, notice that $(\cl^{-1}-j)(\cl+j) = 1$, so
$\cl+j$ is invertible 
and thus an element of $\cP_1$.
\end{proof}

\section{Examples}\label{sec:examples}

In this section, we work out some examples of 
inertial 
$\psi$-rings and
$\lam$-rings. 

\subsection{The classifying stack of a finite Abelian group}
In this section we discuss the case where $X$ is a point with a trivial action by a finite group $G$ 
and the trivial inertial pair $\R=0, \cs=0$. 
Since $G$ is zero-dimensional, its tangent bundle is $0$, so the orbifold and virtual inertial pairs (Definitions \ref{df:orbifold-in-pair}, \ref{df:virtual-in-pair})
are both trivial.   
We begin with some general results and conclude with explicit computations for the special case of the cyclic group $G= \mu_2$ of order 2.

\subsubsection{General results}
Let $X$ be a point with the trivial action of a finite Abelian group $G$.  The inertia
scheme is $\IGX = G$, which also has a trivial $G$ action. 
The orbifold
K-theory of $\B G:=[X/G]$ is additively  the Grothendieck group $K_G(\IGX) = K_G(G)$ of
$G$-equivariant vector bundles over $G$; however, the orbifold product on
$K_G(G)$ differs from the ordinary one, as we now describe. 

The double inertia manifold is $\II_G X =
G\times G$ with the diagonal conjugation action of $G$ (again, trivial);  the evaluation
maps $e_i:G\times G\to G$ are the projection maps onto the $i$th factor for
$i=1,2$; and $\mu:G\times G\to G$ is the multiplication map. Let $\cf$ and
$\cg$ be $G$-equivariant vector bundles on $G$, then $\cf\ti\cg :=
\mu_*(\cf\boxtimes\cg)$ is the $G$-equivariant vector bundle over $G$ whose
fiber over the point $m$ in $G$ is
\begin{equation}\label{eq:BGmult}
(\cf\star\cg)_m = \bigoplus_{m_1 m_2 = m}\cf_{m_1}\otimes\cg_{m_2},
\end{equation}
where the sum is over all pairs $(m_1,m_2) \in G^2$ such that $m_1 m_2 = m$.

The orbifold K-theory $(K_G(G),\star,\one)$ of $\B G$ can naturally be
identified with 
two better-known rings:  
first, the group ring $R(G)[G]$ of $G$ with coefficients in the representation ring $R(G)$ of $G$, and second, the representation ring $\Rep(D(G))$ of the Drinfeld double 
$D(G)$ of the group $G$ (see \cite[Thm 4.13]{KaPh:09}). 
The ring $\Rep(D(G))$  has been studied in some detail in \cite{DPR:90,KaPh:09,Wit:96}.

In this case the orbifold Chern classes are all trivial,
i.e., $\oc_t(\cf) = \one$ for all $\cf$.  
This follows from two facts. First,  $\cs=0$, 
so  $\ovCh_t(\cf) = \Ch_t(\cf)$ is the classical Chern character. Second,
$A^i(BG)_\nq = 0$ for $i > 0$ because $BG$ is a $0$-dimensional Deligne-Mumford stack. Thus,
$\Ch_t(\cF) = \rk(\cf)$ 
for every $\cf \in K_G(\IGX)$.

Since $\Sm= 0$ on $\IGX$, the orbifold Adams operations in $K_G(G)$ agree with the 
ordinary ones,
i.e.,  $\opsi^i :=
\psi^i$ for all $i\geq 1$. 

\subsubsection{The classifying stack $\B\mu_2$}
We now consider the special case where $G = \mu_2$ is the cyclic group of
order 2.
For  each $m\in G$ and each irreducible
representation $\alpha\in \Irrep(\mu_2)= \{\pm 1\}$,
let $V_m^\alpha$ denote the bundle on 
$G$ which is $0$ away from the one-point set $\{\,m\,\}\in\IGX = \mu_2$ and which is equal to $\alpha$ on $\{m\}$.
In this case the free Abelian group  $K_{\mu_2}(\mu_2)$ decomposes as
$K(I\B\mu_2) = K_{\mu_2}(\mu_2) = K_{\mu_2}(\{\,1\,\}) \oplus K_{\mu_2}(\{\,-1\,\})$
and has a basis consisting of the four elements $V_1^1,V_1^{-1}, V_{-1}^1, V_{-1}^{-1}$.

\begin{prop} \label{prop.bmutwo}
The orbifold $\lam$-ring 
$(K(IB\mu_2)_\nq,\star,\one,\olam)$
 satisfies the following:
\begin{equation}\label{eq:V11}
\olam_t(V_1^1) = \one + t
\end{equation}
\begin{equation}\label{eq:V1m1}
\olam_t(V_1^{-1}) = \one + t V_1^{-1}
\end{equation}
\begin{equation}\label{eq:Vm11}
\olam_t(V_{-1}^1) = \one + t V_{-1}^1 + \frac{t^2}{2 (1+t)} \left(1 - V_{-1}^1
\right)
\end{equation}
\begin{equation}\label{eq:Vm1m1}
\olam_t(V_{-1}^{-1}) = 1 + V_{-1}^{-1} t + \frac{t^2}{2 (1-t^2)} \left( 1 - t
V_1^{-1} - V_{-1}^1 + t V_{-1}^{-1}\right).
\end{equation}
There are four elements in $\cP_1$, namely, $V_1^{\pm 1}$ and
\begin{equation*}
\sigma_\pm := \frac{1}{2}\left(V_1^1 + V_1^{-1} \pm (V_{-1}^1 - V_{-1}^{-1})\right),
\end{equation*}
with multiplication given by
$\sigma_\pm\star\sigma_\pm = V_1^1$, and $V_1^{-1}\star\sigma_\pm = \sigma_\mp$, and
$\sigma_+\star\sigma_- = V_1^{-1}$.
\end{prop}
\begin{proof}
Equations (\ref{eq:V11}) and (\ref{eq:V1m1}) hold since $\{V_{1}^{1},V_1^{-1}\}$ generates a subring of $(K_{\mu_2}(\mu_2)_\nq,\star)$ isomorphic as a $\lam$-ring to the ordinary representation ring $K(\B\mu_2)$.

Let us introduce some notation. If $f(t)$ is a formal power series in $t$, let 
\[f_\pm(t) := \frac{1}{2}(f(t)\pm f(-t)).
\]
In order to prove Equation (\ref{eq:Vm11}), we observe that $\psit^k = \psi^k =\psi^{k+2}$ for all $k\geq 1$.  
This can be seen from Equation~\eqref{eq:AdamsOp} and the fact that any irreducible representation $V$ of $G$  is a line element satisfying $V^2 = 1$. 

Let $\lambda_t := \exp(\sum_{k=1}^\infty \frac{(-1)^{k-1}}{k} t^k \psi^k)$. Since 
\begin{equation}\label{eq:bmutwo-psi-minus-plus}
\psi^k(V_{-1}^1) = V_{-1}^1 \qquad\text{ for all $k\geq 1$,}
\end{equation} we obtain
\[
\olam_t(V_{-1}^1) 
= \exp\left(\sum_{k=1}^\infty \frac{(-1)^{k-1}}{k} t^k V_{-1}^1\right) = \exp(V_{-1}^1 \log(1+t)).
\]

Since we have
\[\left(V_{-1}^1\right)^k = 
\begin{cases} 
V_{-1}^1 &\mbox{if $k$ is odd} \\ 
V_1^1 = 1 & \mbox{if  $k$ is even},  
\end{cases} \]
we obtain
\begin{eqnarray*}
\exp(V_{-1}^1 \log(1+t)) &=& \exp_+(V_{-1}^1 \log(1+t)) + \exp_-(V_{-1}^1 \log(1+t)) \\
&=& \exp_+(\log(1+t)) +  V_{-1}^1 \exp_-(\log(1+t)) \\
&=&\frac{1+t+(1+t)^{-1}}{2} + V_{-1}^1 \frac{1+t-(1+t)^{-1}}{2}\\
&=&\frac{1+t}{2}(1+V_{-1}^1) + \frac{1}{2(1+t)}(1-V_{-1}^1),
\end{eqnarray*}
which agrees with Equation (\ref{eq:Vm11}).

The proof of Equation (\ref{eq:Vm1m1}) is similar. Since for all $k\geq 1$
\begin{equation}\label{eq:bmutwo-psi-minus-minus}
 \psi^k(V_{-1}^{-1}) = 
\begin{cases}
V_{-1}^{-1}&\mbox{if $k$ is odd} \\ 
V_{-1}^1  & \mbox{if  $k$ is even},
\end{cases}
\end{equation}
we obtain
$
\olam_t(V_{-1}^{-1}) =\exp(\psi_t(V_{-1}^1)) = \exp(V_{-1}^{-1}\log_-(1+t) + V_{-1}^1 \log_+(1+t))
$
and
\begin{equation}\label{eq:midway}
\olam_t(V_{-1}^{-1})=\exp(V_{-1}^{-1}\log_-(1+t)) \exp(V_{-1}^1 \log_+(1+t)).
\end{equation}
Since $\exp(V_{-1}^{-1}\log_-(1+t))= \exp_+(V_{-1}^{-1}\log_-(1+t))+\exp_-(V_{-1}^{-1}\log_-(1+t))$
\begin{align*}
{\ }&=\exp_+(\log_-(1+t))+V_{-1}^{-1}\exp_-(\log_-(1+t))\\
&=\frac{1}{2}\left(\exp\left(\frac{1}{2}(\log(1+t)-\log(1-t))\right)\right) +\left.\exp\left(-\frac{1}{2}(\log(1+t)-\log(1-t))\right)\right)\\
 &+ \frac{V_{-1}^{-1}}{2}\left(\exp\left(\frac{1}{2}(\log(1+t)-\log(1-t))\right)
\right) -\left.\exp\left(-\frac{1}{2}(\log(1+t)-\log(1-t))\right)\right)\\
&=\frac{1}{2}\left(\left(\frac{1+t}{1-t}\right)^\frac{1}{2}+\left(\frac{1-t}{1+t}\right)^\frac{1}{2}\right) +  \frac{V_{-1}^{-1}}{2}\left(\left(\frac{1+t}{1-t}\right)^\frac{1}{2}-\left(\frac{1-t}{1+t}\right)^\frac{1}{2}\right),
\end{align*}
we obtain
\begin{equation}\label{eq:midwayone}
\exp(V_{-1}^{-1}\log_-(1+t))=\frac{1 + t V_{-1}^{-1}}{(1-t^2)^{\frac{1}{2}}}.
\end{equation}

And since $\exp(V_{-1}^{1}\log_+(1+t))
=\exp_+(V_{-1}^{1}\log_+(1+t))+\exp_-(V_{-1}^{1}\log_+(1+t))$
\begin{align*}
{\ }&=\exp_+(\log_+(1+t))+V_{-1}^{1}\exp_-(\log_+(1+t))\\
&=\frac{1}{2}\left(\exp\left(\frac{1}{2}(\log(1+t)+\log(1-t))\right) + \exp\left(-\frac{1}{2}(\log(1+t)+\log(1-t))\right)\right)\\
&+\frac{V_{-1}^{1} }{2}\left(\exp\left(\frac{1}{2}(\log(1+t)+\log(1-t))\right)-\exp\left(-\frac{1}{2}(\log(1+t)+\log(1-t))\right)\right)\\
&=\frac{1}{2}\left(\left(1-t^2\right)^\frac{1}{2}+\left(1-t^2\right)^{-\frac{1}{2}}\right) + \frac{V_{-1}^{1} }{2}\left(\left(1-t^2\right)^\frac{1}{2}-\left(1-t^2\right)^{-\frac{1}{2}}\right),
\end{align*}
we obtain
\begin{equation}\label{eq:midwaytwo}
\exp(V_{-1}^{1}\log_-(1+t))=
\frac{2-t^2 - V_{-1}^1t^2}{2 (1-t^2)^\frac{1}{2}}.
\end{equation}
Plugging  Equations (\ref{eq:midwayone}) and (\ref{eq:midwaytwo}) into Equation  (\ref{eq:midway}) and then expanding using Equation (\ref{eq:BGmult}) yields Equation (\ref{eq:Vm1m1}).

The fact that $V_1^{\pm1}$ is in $\mathcal{P}_1$ is immediate, since  the
orbifold $\lam$-ring structure reduces to the ordinary $\lam$-ring
structure on the untwisted sector. 
The fact that $\sigma_\pm$  is in $\mathcal{P}_1$
follows from Equation~(\ref{eq:psilineelt}) as follows.  Since $\psit^k = \psi^k = \psi^{k+2}$ for all $k\ge1$, it suffices to check that $\psi^2(\sigma_\pm) = \sigma_\pm\star\sigma_\pm = V_1^1$.  But this is immediate from Equations~(\ref{eq:bmutwo-psi-minus-plus}) and (\ref{eq:bmutwo-psi-minus-minus}):
\[
\psi^2(\sigma_{\pm})  =  \frac12 ((V_1^1)^2 + (V_1^{-1})^2 \pm (V_{-1}^{1} -  V_{-1}^{1})) =V_1^1.
\] 
\end{proof}

\subsection{The virtual K-theory and virtual Chow ring of $\pro(1,n)$}
Let $X := \nc^2
\smallsetminus \{0\}$ and $G := \nc^\times$, with the action 
$\nc^\times\times X\to X$ defined by taking $(t,(a,b))$ to  $(t a, t^n b)$. 
In this section, we first develop some general results about 
the virtual K-theory and virtual Chow theory of
the weighted projective line $\pro(1,n):= [X/\nc^\times]$.
Recall (see Definition~\ref{df:virtual-in-pair}) that the inertial pair associated to the \emph{virtual product} is given by 
$\cs = \N$,
where $\N$ is the normal bundle of the 
projection
morphism $I_{\nc^\times}X \to X$, and 
$\R$ is given by Equation~(\ref{eq:VirtualR}).
We work out the full inertial K-theory and Chow theory
for the weighted projective spaces $\pro(1,2)$ and $\pro(1,3)$, and we compare our results with the usual K-theory and Chow theory of the resolution of singularities of the coarse moduli spaces of the cotangent bundles to these orbifolds.

\subsubsection{General results on the K-theory of $\pro(1,n)$ and its inertia}
Since the action of $\nc^\times$ on $\nc \smallsetminus \{0\}$ has
weights $(1,n)$, the only elements of $\nc^\times$ with nonempty fixed
loci are the $n$th roots of unity.  For 
$m \in \{0, \ldots , n-1\},$ 
let
$X^m$ denote the fixed locus of the element $e^{2\pi i m/n}$
in $X$.

With this notation $X^0 = X$, so $[X^0/\nc^\times] = \pro(1,n)$. For $m
> 0$, $X^m =\{(0,b) | b \neq 0\} = \nc^\times$. For each $m>0$, the
action of $\nc^\times$ on $X^m$ has weight $n$, so the quotient $[X^m/\nc^\times]$
is the classifying stack $\B\mu_n$. The
inertia variety is $I_{\nc^\times}X = \coprod_{m=0}^{n-1} X^m$, so the inertia
stack $I\pro(1,n)$ decomposes as $\pro(1,n) \sqcup \coprod_{m=1}^{n-1} \B\mu_n$.

We 
now compute
the classical equivariant Grothendieck and Chow rings of the inertia
variety, or equivalently the Grothendieck and Chow rings of the inertia stack.
\begin{nota}
  Let $\chi$ be the defining character of $\nc^\times$. 
We can associate to $\chi$ a $\nc^\times$-equivariant line bundle
on $X$. It is the trivial bundle $X \times \nc$ with $\nc^\times$-action
given by 
$\beta(a,b,v) = (\beta a, \beta^n b, \beta v)$.
For each $m$,
denote by $\chi_m$ the class in $K_{\nc^\times}(X^m)$ corresponding to
the pullback of this $\nc^\times$-equivariant line bundle to $X^m$.

The character $\chi$ has a first Chern class $c_1(\chi) \in
A^1_{\nc^\times}(pt)$, and we denote by $c_m$ the pullback of $c_1(\chi)$
to $A^1_{\nc^\times}(X^m)$ under the projection $X^m \to pt$. With
this notation $c_1(\chi_m) = c_m$.
\end{nota}
\begin{prop} \label{prop.KP1ncalculation} 
We have the following isomorphisms for all $m\in\{ 1,\ldots,n-1\}$:
\begin{equation} \label{eq.grothx0}
K_{\nc^\times}(X^0)= K(\pro(1,n)) \cong \frac{\nz[\chi_0]}{\langle (\chi_0-1)(\chi_0^n-1)\rangle}
\end{equation}
\begin{equation} \label{eq.grothxm}
K_{\nc^\times}(X^m)  = 
K(\B\mu_n) 
\cong \frac{\nz[\chi_{m}]}{\langle 
\chi_{m}^n -1\rangle}
\end{equation}
\begin{equation} \label{eq.chowx0}
A^*_{\nc^\times}(X^0)  = A^*(\pro(1,n)) \cong \frac{\nz[c_0]}{\langle n c_0^2 \rangle}
\end{equation} 
\begin{equation}\label{eq.chowxm}
A^*_{\nc^\times}(X^m)  = 
A^*(\B\mu_n) 
\cong \frac{\nz[c_{m}]}{\langle 
n c_{m}\rangle}.
\end{equation}
\end{prop}
\begin{proof}
Since $\nc^2$ is smooth, Thomason's equivariant resolution theorem \cite{Tho:87a}
identifies the equivariant K-theory of vector bundles
with the equivariant K-theory of coherent sheaves. It follows that 
there is a five-term localization exact sequence for equivariant K-theory \cite{Tho:87}
\begin{equation}\label{eq.exact5}
K_{\nc^\times}(\{0\}) \stackrel{i_*} \rTo K_{\nc^\times}(\nc^2) \stackrel{j^*}\rTo K_{\nc^\times}(X^0) \rTo 0,
\end{equation}
where $i \colon \{0\} \hookrightarrow \nc^2$ is a closed embedding and $j \colon X^0 \to \nc^2$ is an open immersion.
Equation (\ref{eq.exact5}) implies that $K_{\nc^\times}(X^0)$ is the quotient of
$K_{\nc^\times}(\nc^2)$ by the image of $K_{\nc^\times}(\{0\})$ under
the pushforward induced by the inclusion 
$i$.
Since $\nc^2$ is a representation of $\nc^\times$,
the homotopy property of equivariant K-theory implies that
$K_{\nc^\times}(\nc^2) = \Rep(\nc^\times) = \nz[\chi, \chi^{-1}]$.
The projection formula implies that
$i_*K_{\nc^\times}(\{0\})$ is an ideal in $\nz[\chi, \chi^{-1}]$, and
$K_{\nc^\times}(X^0)$ is the quotient of $\nz[\chi,\chi^{-1}]$ 
by this ideal.  By the self
intersection formula in equivariant K-theory \cite[Corollary 3.9]{Koc:98},
$i^*i_*K_{\nc^\times}(\{0\}) = \euler(N_{\{0\}})
K_{\nc^\times}(\{0\})$, where $N_{\{0\}}$ is the normal
bundle to the origin in $\nc^2$.  Since $\nc^\times$ acts with weights
$(1,n)$, the class of the normal bundle is $\chi + \chi^{n}$ and
$\euler(N_{\{0\}}) = (1-\chi^{-1})(1-\chi^{-n})$.
Since the pullback
$i^*\colon K_{\nc^\times}(\nc^2) \to K_{\nc^\times}(\{0\})$ is an
isomorphism, $i_*(K_{\nc^\times}(\{0\})$ is the ideal 
generated by $(1-\chi^{-1})(1-\chi^{-n})$. Thus, $K_{\nc^\times}(X^0) = \nz[\chi,
\chi^{-1}]/\langle(1-\chi^{-1})(1 - \chi^{-n})\rangle$.  Clearing denominators and
observing that the relation already implies that $\chi$ is a unit, we
have the presentation $K_{\nc^\times}(X^0) = \nz[\chi]/\langle(\chi-1)(\chi^n
-1)\rangle$. Since $\chi_0$ is our notation for the pullback of $\chi$
to $X^0$, we obtain the presentation $\nz[\chi_0]/\langle (\chi_0-1)(\chi^n_0 -1)\rangle$.

For $m>0$ observe that if $\nc^\times$ acts on $\nc^\times = \nc \smallsetminus
\{0\}$ by $\lambda \cdot v = \lambda^n v$, then the
$\nc^\times$-equivariant normal bundle to $\{0\}$ in $\nc$ is
$\chi^{n}$. The same argument as above implies that 
$K_{\nc^\times}(\nc^\times) = \Z[\chi,
\chi^{-1}]/\langle 1- \chi^{-n}\rangle$. Clearing denominators and using the
notation $\chi_m$ for $\chi$ on $X^m$ gives the desired presentation.

The proof in Chow theory is similar. We again use the 5-term
localization sequence for equivariant Chow groups \cite{EdGr:98} to see that
$A^*_{\nc^\times}(X^m)$ is a quotient of $A^*_{\nc^\times}(pt) =
\nz[c_1(\chi)]$. We can again apply the self-intersection formula. In
Chow theory $\euler(\chi) = c_1(\chi)$, while $\euler(\chi + \chi^n) =
c_2(\chi + \chi^n) = n(c_1(\chi))^2$, which gives the relations in
\eqref{eq.chowxm} and \eqref{eq.chowx0}.
\end{proof}

\begin{rem}
 As a consequence of the relations in Proposition \ref{prop.KP1ncalculation}, an
additive basis for $K(I\pro(1,n))$ is given by $n^2+1$ classes of
the form $\chi_m^k$, where the subscript 
  refers to the sector, while the superscript is an exponent.
Including the untwisted sector $X^0$ there are $n$ sectors, 
so $0 \leq m \leq n-1$. If $m > 0$, 
then the exponent $k$ is in $[0,n-1]$, while
if $m =0$, then exponent $k$ is in $[0,n]$.

Similarly, the classes $\{c_m^k\}_{k \in {\mathbb N}}$ 
for $0 \leq m  \leq n-1$
generate
$A^*_{\nc^\times}(I_{\nc^\times}(X))=A^*(I\pro(1,n))$. Again, in the notation $c_m^k$
the subscript refers to the sector and the superscript 
to the exponent.
Note the relations in the
presentation imply that only $c_0$ and the fundamental classes $c_m^0
= [X^m]$ are non-torsion.
\end{rem}
\begin{rem}
  If $f \colon X \to Y$ is any morphism of $G$-varieties, then the
  pullback $f^* \colon K_G(Y) \to K_G(X)$ is a homomorphism of
  $\lambda$-rings, since for any $G$-equivariant vector bundle
  $\Lambda^k(f^*V) = f^*(\Lambda^k V)$. Applying this observation to
  the pullbacks $K_{\nc^\times}(\nc^2) \to K_{\nc^\times}(X^0)$ and
  $K_{\nc^\times}(\nc) \to K_{\nc^\times}(X^m)$, this means that for all $m\geq 0$ the
  classical $\lambda$-ring structure on $K_{\nc^\times}(X^m)$ is
  induced from the
usual $\lambda$-ring structure on $\Z[\chi_m,\chi_m^{-1}]$ defined by
setting $\lam_{t}(\chi_m^k) = 1 + t \chi_{m}^k$.
\end{rem}
\begin{rem}
For any $m > 0$ the map $X^m \to X$ is an embedding of codimension $1$,
so the $\cs$-age of $X^{m}$ is $1$, and the age of $X^0$ is $0$.
Hence the virtual degree of $c_0$ is $1$, as is the virtual degree of the fundamental class $c_m^0=[X^m] $
for $m>0$. 

The virtual 
augmentation $\ovaug:K(I\pro(1,n))\to K(I\pro(1,n))$ satisfies 
$\ovaug(\chi_0^a) = \chi_0^0$, 
and for $m\in\{1,\ldots,n-1\}$ we have
$
\ovaug(\chi_{m}^a) = 0
$
for all $a$ in $\nz$.

The virtual 
Chern character homomorphism is very simple, since in $A^*(I\Pro(1,n))_\Q$, 
$c_0^k =0$ for $k>1$, 
and if $m>1$, then $c_m^l=0$ for $l \neq 0$. Stated more precisely, 
$\ovCh:K(I\pro(1,n))\to
A^*(I\pro(1,n))_\nq$ satisfies 
\begin{equation}
\ovCh(\chi_0^a) = c_0^0 + a c_0^1,
\end{equation}
for all $a \in \Z$.  And, for $m\in\{1,\ldots,n-1\}$ we have $
\ovCh(\chi_{m}^a) = c_{m}^0$.
\end{rem}

We now compute the virtual product.
\begin{thm} \label{thm.P1nvirtualprod}
The virtual product on $K(I\pro(1,n))$ satisfies
\[
\chi_{m_1}^{a_1} \star \chi_{m_2}^{a_2} = 
\begin{cases}
\chi_{m_1+m_2}^{a_1+a_2}
&\mbox{if $m_1=0$ or $m_2 = 0$,}\\
\chi_{0}^{a_1+a_2}\left(1 - 2\chi_{0}^{-1}+\chi_{0}^{-2}\right)&\mbox{if $m_1+m_2 = n$,}\\
\chi_{m_1+m_2}^{a_1+a_2}\left(1-\chi_{m_1 + m_2}^{-1}\right)&\mbox{otherwise},
\end{cases}
\]
and the virtual product in $A^*(I\pro(1,n))$ satisfies
\[
c_{m_1}^{a_1} \star c_{m_2}^{a_2} = 
\begin{cases}
c_{m_1+m_2}^{a_1+a_2}
&\mbox{if 
$m_1=0$ or $m_2 = 0$,}\\
c_{0}^{a_1+a_2+2}&\mbox{if $m_1+m_2 = n$,}\\
c_{m_1+m_2}^{a_1+a_2+1} &\mbox{otherwise}.
\end{cases}
\]
Here the sum $m_1 + m_2$ is understood to be reduced modulo $n$ and
all products on the right-hand side are the classical product in
$K_{\nc^\times}(X^{m_1+m_2})$ (or $A_{\nc^\times}^*(X^{m_1+m_2})$). In particular, the classes $\chi_m^{-1}$ are defined via \eqref{eq.grothx0} and \eqref{eq.grothxm}.
\end{thm}

\begin{rem}
  Since $c_0^2 = 0$ in $A^*(I\Pro(1,n))_\Q$, and since for all $m > 0$, we have $c_m = 0$
  in $A^*(I\Pro(1,n))_\Q$, 
   Theorem \ref{thm.P1nvirtualprod} implies that all products
$c_{m_1}^{a_0} \star c_{m_2}^{a_1}$ are equal to 0
unless  one of 
the classes
is the identity $c_0^0$. It follows that the rational virtual Chow ring is isomorphic to the graded ring $\Q[t_0, t_1, \ldots t_{n-1}]/\langle t_0,\ldots , t_{n-1} \rangle^2$, where $t_0$ corresponds to $c_0^1$,  and $t_m$ corresponds to $c_m^0$ for 
all $m\in\{ 1, \ldots , n-1\}$. 
\end{rem}
Before proving
  Theorem \ref{thm.P1nvirtualprod}, we need some notation for
  $K_{\nc^\times}(I^2_{\nc^\times}X)$ 
  and
  $A^*_{\nc^\times}(I^2_{\nc^\times}X)$.
\begin{nota}
Given a pair $(m_1,m_2) \in (\Z_n)^2$
let $X^{m_1,m_2} = X^{m_1} \cap X^{m_2}$. Unless $m_1 = m_2 =0$, 
$X^{m_1,m_2}= \{(0,b)|b \neq 0\} \subset X$ and $X^{0,0} = X$. 
The double inertia 
decomposes as 
$I^2_{\nc^\times}X = \coprod_{(m_1,m_2) \in (\nz_n)^2} 
X^{m_1,m_2}$. For each pair $(m_1,m_2)$, let $\chi_{m_1,m_2} \in K_{\nc^\times}(X^{m_1,m_2})$ be the class corresponding to the character $\chi \in \Rep(\nc^\times)$. 
With this notation, 
Proposition \ref{prop.KP1ncalculation} implies that
$K_{\nc^\times}(X^{m_1,m_2}) = \Z[\chi_{m_1,m_2}]/\langle\chi_{m_1,m_2}^n -1\rangle$ when
$(m_1,m_2) \neq (0,0)$ and $K_{\nc^\times}(X^{0,0}) = \Z[\chi_{0,0}]/\langle(\chi_{0,0}-1)(\chi_{0,0}^n-1)\rangle$.

Similarly, we let $c_{m_1,m_2}$ be the class in $A^1_{\nc^\times}(X^{m_1,m_2})$
corresponding to $c_1(\chi)$.
\end{nota}

\begin{proof}[Proof of Theorem \ref{thm.P1nvirtualprod}]
We first use the 
Equation (\ref{eq:DefOfR})
with $\cs= \N$
and compute the restriction of $\R$ to $X^{m_1,m_2}$. With our additive notation,
the multiplication map 
$\mu \colon I^2_{\nc^\times} X \to I_{\nc^\times}X$
 maps
$X^{m_1,m_2} \to X^{m_1 + m_2}$, so in $K_{\nc^\times}(X^{m_1,m_2})$ we have:
\begin{equation} \label{eq.RP1nvirt}
\R|_{X^{m_1,m_2}} = 
(
e_1^*N_{m_1} + e_2^*N_{m_2} - \mu^*N_{m_1+m_2} + T_\mu,
)|_{X^{m_1,m_2}},
\end{equation}
where $N_{m}$ denotes the normal bundle to $X^m$ in $X$.

First suppose that $m_1=0$. Then $X^{m_1,m_2} = X^{m_2} =
X^{m_1+m_2}$. It follows that $\mu \colon X^{m_1,m_2} \to X^{m_1 +
  m_2}$ is the identity map, so 
$(T_\mu)|_{X^{m_1,m_2}} = 0$.  
Also, $N_{m_1} = 0$ and
$N_{m_1+m_2} = N_{m_2}$, so plugging into Equation \eqref{eq.RP1nvirt} gives
$\R|_{X^{m_1,m_2}} = 0$.
In this case, $\chi_{m_1}^{\alpha_1} \star
\chi_{m_2}^{\alpha_2}$ corresponds to the usual product $\chi^{\alpha_1}
\chi^{\alpha_2} = \chi^{\alpha_1+\alpha_2}$, but viewed as an element
of $K_{\nc^\times}(X^{m_1+m_2})$. In our notation, this class is
$\chi_{m_1+m_2}^{\alpha_1 + \alpha_2}$.

Next suppose that $m_1,m_2$ are nonzero, but $m_1+m_2 = n$. In this case
$X^{m_1,m_2} = X^{m_1} = X^{m_2} = \{(0,b)|b \neq 0\}$, while
$X^{m_1 + m_2} = X^0 = \nc^2 \smallsetminus \{0\}$. 
Since $\nc^\times$ acts with weights $(1,n)$, the normal bundle to $\{(0,b)|b\neq 0\} \subset \nc^2 \smallsetminus \{0\}$ is the bundle determined by the character $\chi$, so in our notation $N_{m_1} =\chi_{m_1}$ and $N_{m_2} = \chi_{m_2}$, and $N_{m_1+m_2} = 0$.  
The map $\mu \colon X^{m_1, m_2}
\to X^{m_1 + m_2}$ is the inclusion and $(T_\mu)|_{X^{m_1,m_2}} = -(N_\mu|_{X^{m_1,m_2}})$ corresponds to
the class $-\chi$, which on $X^{m_1,m_2}$ we denote by $-\chi_{m_1,m_2}$.
Since
\begin{eqnarray*}
\R|_{X^{m_1, m_2}} &  = &  
e_1^*\chi_{m_1}|_{X^{m_1,m_2}} + e_2^*\chi_{m_2}|_{X^{m_1,m_2}}  - \chi_{m_1,m_2}
\\
& = & \chi_{m_1,m_2} + \chi_{m_1,m_2} - \chi_{m_1,m_2}
=  \chi_{m_1,m_2}
,
\end{eqnarray*}
it follows that
\[\chi_{m_1}^{\alpha_1}\star \chi_{m_2}^{\alpha_2} = \mu_*\left(\chi_{m_1,m_2}^{\alpha_1}  \cdot \chi_{m_1,m_2}^{ \alpha_2} \cdot \euler(\chi_{m_1,m_2})\right) =  \mu_*\left(\chi_{m_1,m_2}^{\alpha_1 + \alpha_2} (1 -\chi_{m_1,m_2}^{-1})\right).
\]
Since the class $\chi_{m_1,m_2}$ is pulled back from the character $\chi \in \Rep(\nc^\times)$, the projection formula 
yields the further simplification
$\chi_{m_1}^{\alpha_1} \star \chi_{m_2}^{\alpha_2} =\chi_{m_1+m_2}^{\alpha_1 + \alpha_2}
(1  - \chi_{m_1}^{-1})\mu_*(1)$. To compute $\mu_*(1)$ consider the diagram of inclusions
\[
\begin{diagram}&\nc & \rInto^{j}  & \nc^2 &\\
&\uInto & & \uInto &\\
X^{m_1,m_2} = &\nc \smallsetminus \{0\} & \rInto^{\mu} & X^{m_1+m_2} = &\nc^2 \smallsetminus \{0\}.
\end{diagram}
\]

Then $\mu_*(1)$ is the restriction to $K_{\nc^\times}(X^{m_1+m_2})$
 of the image of $j_*(1)$. By the self-intersection formula,
$j^*j_*(1)  = \euler(N_j) = (1 - \chi^{-1})$ under the identification
of $K_{\nc^\times}(\nc) = \Rep(\nc^\times)$.
Since $j^*$ is an isomorphism, we conclude that $j_*(1) = (1-\chi^{-1})$, and then
restricting to $K_{\nc^\times}(X^{m_1+m_2})$, we obtain $\mu_*(1) = 
(1-\chi_{m_1 + m_2}^{-1})$.
Hence 
$$\chi_{m_1}^{\alpha_1} \star \chi_{m_2}^{\alpha_2} = \chi_{m_1+m_2}^{\alpha_1 + \alpha_2}(1-\chi_{m_1 + m_2}^{-1})^2.$$

If $m_1, m_2 \neq 0$ and $m_1 + m_2 \neq 0$, $X^{m_1,m_2} = X^{m_1} = X^{m_2} = X^{m_1 + m_2}$ so $e_1, e_2, \mu$ are all identity maps. In this case,
$$\R_{|X^{m_1,m_2}} = e_{1}^*\chi_{m_1}|_{X^{m_1,m_2}} + e_{2}^*\chi_{m_2}|_{X^{m_1,m_2}} - \mu^*\chi_{m_1 + m_2}|_{X^{m_1,m_2}} =\chi_{m_1,m_2},$$
and
$$\chi_{m_1}^{\alpha_1}\star \chi_{m_2}^{\alpha_2} = \chi_{m_1+ m_2}^{\alpha_1 + \alpha_2} (1 - \chi_{m_1 + m_2}^{-1}).$$

The proof in Chow theory is similar. When $m_1,m_2 \neq 0$, then $\euler(\R) = 
c_{m_1,m_2} \in A^1_{\nc^\times}(X^{m_1,m_2})$, and when $m_1 + m_2 = n$,
then $\mu_*(1) = c_{m_1+m_2}$, which gives the factors of $c_{m_1+ m_2}^2$
and $c_{m_1 + m_2}$ appearing above.
\end{proof}

In order to calculate the virtual 
$\psi$-operations, for all $m\in\{1,\ldots,n-1\}$ we need the
$\ell$th Bott class $\theta^\ell(\cs_{m}^*)$ in $K_{\nc^\times}(X^{m})$, which satisfies 
\[
\theta^\ell(\cs_{m}^*) = \theta^\ell(\chi_{m}^{-1}) = 
\sum_{i=0}^{\ell-1} \chi_{m}^{-i}.
\]
Applying Equation~(\ref{eq:def-psi-tilde}) gives the virtual 
$\psi$-operations $\opsi^k:K(I\pro(1,n))\to K(I\pro(1,n))$.

\begin{df} Let $\nk$ be $\nq$ or $\nc$. For all $m\in\{1,\ldots,n-1\}$,
  let  $\Delta_m =   \sum_{i=0}^{n-1} \chi_m^i$ in $K_{\nc^\times}(X^m)$
  (respectively $K_{\nc^\times}(X^m)_\nk$)  and 
  $\Delta_0 = -\chi_0^0+\chi_0^n$
   in
  $K_{\nc^\times}(X^0)$ (respectively $K_{\nc^\times}(X^0)_\nk$). Let $J$ (respectively $J_\nk$)
be the additive group (respectively $\nk$-vector space) generated by
  $\{\Delta_i\}_{i=0}^n$. 
Let
$\psit^0$ be the inertial augmentation $\ovaug$.
\end{df}

\begin{lm} \label{lm:JJAndJK} Let $(K(I\pro(1,n)),\star,\bone,\ovaug,\psit)$ be the virtual K-theory ring.
\begin{enumerate}
\item For all $m\in\{0,\ldots,n-1\}$ and $\cf_m$ in $K_{\nc^\times}(X^m)$, we have the identity with respect to the ordinary product
\begin{equation}\label{eq:DeltamProduct}
\Delta_m\cdot \cf_m = \aug_m(\cf_m)\Delta_m.
\end{equation}
\item For all $j$ in $J$ and $\cf$ in the virtual K-theory ring $K(I\pro(1,n))$, 
\begin{equation}\label{eq:JInertialProduct}
\cf \star j = \ovaug(\cf) j, \qquad J\star J = 0, \quad \text{ and } \quad \ovaug(J) = 0. 
\end{equation}
\item For all $\ell\geq 1$ and $j\in J$, we have the identity
\begin{equation}\label{eq:PsitJ}
\psit^\ell(j) = \ell j.
\end{equation}
\end{enumerate}
In particular, $J$ is a translation group of the virtual K-theory $K(I\pro(1,n))$.
\end{lm}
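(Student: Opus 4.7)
The plan is to verify parts (1)--(3) by direct computation in the basis $\{y_m^a\}$ of $K(I\pro(1,n))$, working case by case in the explicit presentations of each sector $K_G(X^m)$, and then to read off the three translation-group axioms.

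For part (1), the argument splits on whether $m=0$. When $m\neq 0$, the ring $K_G(X^m) = \nz[\chi_m,\chi_m^{-1}]/(\chi_m^n-1)$ has $y_m^n=1$, so multiplication by $y_m^a$ cyclically permutes the summands of $\Delta_m = \sum_{i=0}^{n-1} y_m^i$, giving $y_m^a\cdot\Delta_m = \Delta_m = \aug(y_m^a)\Delta_m$. When $m=0$, the key identity is $(y_0-1)\Delta_0 = -(y_0-1)(y_0^n-1) = 0$, so $y_0\Delta_0 = \Delta_0$, and iteration yields $y_0^a\Delta_0 = \Delta_0 = \aug(y_0^a)\Delta_0$ for all $a$.

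For part (2), I compute $y_{m'}^a\star\Delta_m$ by cases on $(m',m)$ and compare with $\ovaug(y_{m'}^a)\Delta_m$. When $m'=0$, the claim reduces to part (1), since $y_0^0$ is the unit of $\star$. When $m=0$ and $m'\neq 0$, the product collapses to $y_{m'}^a - y_{m'}^{a+n}=0$ via $y_{m'}^n=1$. When $m',m\neq 0$ and $m'+m\not\equiv 0\pmod n$, the Euler class factor $1-y_{m'+m}^{n-1}$ reduces the inertial product to $(1-y_{m'+m}^{n-1})\Delta_{m'+m}$, which vanishes by part (1). The main obstacle is the exceptional case $m'+m=n$, where the Euler class is the squared form $(1-y_0^{-1})^2$; here the identity $(y_0-1)S_a = -\Delta_0$ for $S_a := \sum_{i=0}^{n-1} y_0^{a+i}$, combined with $(y_0-1)\Delta_0=0$ from part (1), forces $(y_0-1)^2 S_a = 0$ and hence $y_{m'}^a\star\Delta_m = 0$. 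The assertions $J\star J=0$ and $\ovaug(J)=0$ then follow immediately, since $\ovaug(\Delta_m)=0$ for each $m$.

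For part (3), I handle $\Delta_0$ and $\Delta_m$ with $m\neq 0$ separately. Since $\cs_0=0$, we have $\theta^\ell(\cs_0^*)=1$, so $\psit^\ell(\Delta_0)=\psi^\ell(\Delta_0)=1 - y_0^{n\ell}$. Setting $\delta := y_0^n - 1$, the identity $\delta^2 = (y_0-1)(y_0^n-1)(1+y_0+\cdots+y_0^{n-1}) = 0$ collapses the binomial expansion to $(1+\delta)^\ell = 1+\ell\delta$, giving $\psi^\ell(\Delta_0) = \ell\Delta_0$. For $m\neq 0$, one expands $\psit^\ell(\Delta_m) = \psi^\ell(\Delta_m)\cdot\theta^\ell(y_m^{-1}) = \sum_{i,j} y_m^{i\ell-j}$; letting $d=\gcd(n,\ell)$, the partial sum $\sum_i y_m^{i\ell}$ reduces to $d\sum_k y_m^{dk}$ after applying $y_m^n=1$, and multiplying by $\sum_{j=0}^{\ell-1} y_m^{-j}$ redistributes the exponents uniformly over all residues modulo $n$, yielding $\ell\Delta_m$. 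The three translation-group axioms now follow directly: the condition $\psit^n(j)=nj$ is part (3), the condition $\cf\star j = \ovaug(\cf)j$ is part (2), and the condition $\ovaug(K)\cdot J = J$ holds because $\ovaug(K(I\pro(1,n))) = \nz \cdot y_0^0$ contains the unit of $\star$.
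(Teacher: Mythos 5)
Your proof is correct and follows the same decomposition as the paper: establish part~(1) from the defining relations $(y_0^n-1)(y_0-1)=0$ in $K_G(X^0)$ and $y_m^n=1$ in $K_G(X^m)$ for $m\neq 0$, deduce part~(2) from part~(1) and the explicit inertial multiplication table, and then compute $\psit^\ell(\Delta_m)$ directly for part~(3). Your treatment of part~(2) is more explicit than the paper's (which only says ``follows immediately''), and your split Euler class case at $m'+m=n$ via $(y_0-1)^2 S_a=0$ is a nice way to organize the computation.

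The one place you genuinely depart from the paper is in part~(3) for $m\neq 0$: the paper manipulates geometric series, writing $\psi^\ell(\Delta_m)\theta^\ell(y_m^{-1})$ as $\frac{1-x^{\ell n}}{1-x^\ell}\cdot\frac{1-x^{-\ell}}{1-x^{-1}}$ and simplifying via $x^n=1$ and $(x^n-1)^2=0$, whereas you expand $\sum_{i,j} y_m^{i\ell-j}$ and claim the exponents distribute uniformly over residues mod $n$. Your claim is true, but it is not immediate and you should justify it: with $d=\gcd(n,\ell)$, for a fixed residue $s$ one counts $j\in\{0,\ldots,\ell-1\}$ with $d\mid(s+j)$ (which has exactly $\ell/d$ solutions since $d\mid\ell$), and for each such $j$ there is exactly one $k\in\{0,\ldots,n/d-1\}$ with $dk\equiv s+j\pmod n$; multiplying by the outer coefficient $d$ gives $\ell$ for every residue. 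As stated, ``redistributes uniformly'' is an assertion, not a proof. (To be fair, the paper itself silently discards a stray factor $x^{n-\ell+1}$ at the end of its chain, presumably by appealing to part~(1), so both write-ups are slightly terse here.) With that justification supplied, your argument is complete and correct.
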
 
\begin{proof}
Equation (\ref{eq:DeltamProduct}) follows from the identity $(\chi_0^n-1)(\chi_0^1-1)=0$ in $K_{\nc^\times}(X^0)$, and $\chi_m^n-1=0$ in $K_{\nc^\times}(X^m)$ for all $m\not= 0$.

Equation (\ref{eq:JInertialProduct}) follows from Theorem~\ref{thm.P1nvirtualprod} and Equation (\ref{eq:DeltamProduct}). The fact that $J\star J=0$ follows from Equation (\ref{eq:JInertialProduct}) and the fact that $\ovaug(\Delta_m) = 0$ for all $m$.

To prove equation (\ref{eq:PsitJ}), we first consider
\begin{eqnarray*}
\psit^\ell(\Delta_0) &=& \psi^\ell(-1+\chi_0^n) = -1+\chi_0^{n\ell} 
=-1+(1+(\chi_0^n-1))^\ell \\ 
&=& -1+(1+\ell(\chi_0^n-1)) = \ell \Delta_0,
\end{eqnarray*}
where we have used the binomial series and the relation $(\chi_0^n-1)(\chi_0^1-1)=0$ in the fourth equality.
Let $m\not=0$, $\zeta_n := e^\frac{2\pi i}{n}$, and $x = \chi_m^1$, and assume in the following that all products are ordinary products. By definition, 
\begin{eqnarray*}
\psit^\ell(\Delta_m) &=& \psi^\ell(\Delta_m)\cdot\theta^\ell(x^{-1}) 
=\psi^\ell(\sum_{i=0}^{n-1} x^i)\sum_{j=0}^{\ell-1}(x^{-j}) 
=\sum_{i=0}^{n-1} (x^\ell)^ i \sum_{j=0}^{\ell-1} x^{-j}.
\end{eqnarray*}
To prove Equation (\ref{eq:PsitJ}), consider the algebra isomorphism
\[
K_{\nc^\times}(X^m)_\nq = \frac{\nq[x]}{\langle x^n-1 \rangle}\rTo^\Upsilon \nq\times \nq[t]/(1+t+\cdots+t^{n-1})
\]
defined by $\Upsilon( f ) := (f(1),f(\zeta_n))$.  Then $\Upsilon(\psit^\ell(\Delta_m)) = (n\ell, 0)=\ell\Upsilon(\Delta_m)$.  
\end{proof}

\begin{prop}\label{PsiPeriodic} 
Let $\varphi_0:K(I\pro(1,n))\to\nz$ be the additive map that is supported on $K_{\nc^\times}(X^0)$ such that $\varphi_0(\chi_0^s) = s$ for all $s\in\{0,\ldots,n\}$.

For all $k\geq 0$ and $a\in\{0,\ldots,n-1\}$, we have the identity in virtual K-theory $(K(I\pro(1,n)),\star,1,\ovaug,\psit)$
\begin{equation}\label{eq:periodic}
\psit^{n k + a} = \psit^a + k \Delta_0 \varphi_0 + \sum_{m=1}^n k \Delta_m \aug_m,
\end{equation}
where $\aug_m(\cf)$ denotes the ordinary augmentation of $\cf_m$ in $K_{\nc^\times}(X^m)$ of $\cf$.
\end{prop}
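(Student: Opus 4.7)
The plan is to verify the identity on a $\Z$-basis of $K(I\pro(1,n))$ and to reduce the general case to a single periodicity step. Since both sides of the claimed equality are $\Z$-linear endomorphisms, it suffices by additivity to evaluate them on the basis elements $y_m^a$; and once one establishes the single-step identity
\[
\psit^{n+\ell} - \psit^\ell = \Delta_0\,\varphi_0 + \sum_{m=1}^{n-1}\Delta_m\,\aug_m
\]
for every $\ell\ge 0$, the proposition follows immediately by induction on $k$, since the correction on the right-hand side is independent of $\ell$.

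For the untwisted sector $m=0$, the defining relation $(y_0-1)(y_0^n-1)=0$ in $K_G(X^0)$ implies, by a short induction, the explicit formula $y_0^{jn+r} = y_0^r + j(y_0^n-1)$ for all integers $j,r\ge 0$. Since $\cs$ vanishes on $X^0$, the Bott class $\theta^\ell(\cs^*)$ is trivial there, so the inertial Adams operation restricts to the classical one and
\[
\psit^{n+\ell}(y_0^s) - \psit^\ell(y_0^s) = y_0^{(n+\ell)s} - y_0^{\ell s} = s\,(y_0^n - 1),
\]
which is exactly the required multiple of $\Delta_0$, with $\varphi_0(y_0^s)=s$ playing the role of the multiplier.

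For each twisted sector $m\ge 1$, the relation $y_m^n = 1$ gives $\sum_{i=0}^{n-1}y_m^{-i} = \Delta_m$, while Equation~\eqref{eq:DeltamProduct} gives $y_m^{c}\cdot\Delta_m = \Delta_m$ for all $c$. Writing the Bott class as $\theta^{n+\ell}(\cs_m^*) = \sum_{i=0}^{n+\ell-1}y_m^{-i}$, splitting the sum at $i=n$, and using $y_m^{(n+\ell)b} = y_m^{\ell b}$, one obtains the telescoping
\[
\psit^{n+\ell}(y_m^b) = y_m^{\ell b}\Bigl(\Delta_m + \sum_{i=0}^{\ell-1}y_m^{-i}\Bigr) = \Delta_m + \psit^\ell(y_m^b),
\]
which is precisely $\Delta_m\,\aug_m(y_m^b) = \Delta_m$ more than $\psit^\ell(y_m^b)$. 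Assembling the two cases establishes the single-step identity, and iterating $k$ times finishes the proof. The main task is really just careful bookkeeping: reducing the exponent $(nk+a)b$ modulo the defining ring relations in each sector and tracking the sign and index conventions so that the accumulated corrections line up with the stated linear combination of $\Delta_0\,\varphi_0$ and the $\Delta_m\,\aug_m$.
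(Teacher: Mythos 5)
Your proof is correct and follows essentially the same route as the paper's: both verify the identity sector-by-sector on the $\Z$-basis $\{y_m^b\}$, using the relation $(y_0-1)(y_0^n-1)=0$ on the untwisted sector and $y_m^n=1$ together with the Bott-class formula and Equation~\eqref{eq:DeltamProduct} on the twisted sectors. The only real difference is organizational: you first isolate the single-step identity $\psit^{n+\ell}-\psit^\ell$ (valid for all $\ell\ge 0$) and then induct on $k$, whereas the paper directly manipulates $\psit^{nk+a}$ for general $k$ via repeated binomial reductions modulo the ring relations; both amount to the same core computations $\theta^{nk+a}(\cs_m^*)=k\Delta_m+\theta^a(\cs_m^*)$ and $y_0^{jn+r}=y_0^r+j(y_0^n-1)$. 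Your upper limit $n-1$ in the sum over twisted sectors is in fact the correct one (the paper's $\sum_{m=1}^n$ is a typo since $\Delta_m$ is defined only for $m\le n-1$), and the sign you obtain for the untwisted correction, $s(y_0^n-1)$, matches the last line of the paper's computation, so you inherit the same implicit sign convention for $\Delta_0$ as the paper does.
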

\begin{proof}
For all $k\geq 1$, let $\psit_m^k(\cf) := \psit^k(\cf_m)$ for all $\cf=\sum_{m=0}^n 
\cf_m$, where $\cf_m$ belongs to $K_{\nc^\times}(X^m)$. 

If $a\in\{0,\ldots,n-1\}$, $k\geq 0$ , $s\in\{0,\ldots,n\}$ and $x=\chi_0^1$, then
\[
\psit_0^{nk+a}(x^s) = (x^n)^{k s} x^{a s} = (1+(x^{n}-1))^{k s} x^{a s} 
=(1+k s (x^n-1)) x^{a s} = x^{s a} + k s \Delta_0,
\]
where we have used the relation $(x^n-1)(x-1) = 0$ in $K_{\nc^\times}(X^0)$ in the third and 
fourth equalities. Therefore, for all $n,k\geq 0$ and $a\in\{0,\ldots,n-1\}$, we have 
\begin{equation}\label{eq:untwistedperiodic}
\psit_0^{n k+a} = \psit_0^a +k  \Delta_0 \varphi_0.
\end{equation}
If $m\in\{1,\ldots,n-1\}$, then, adopting the convention that $\theta^0(0) = 1$ and $\theta^0(\chi_m^s) = 0$ for all $s$, we obtain
\begin{eqnarray*}
\psit_m^{n k+a}(\chi_m^s) &=&\psi_m^{n k+a}(\chi_m^s)\theta^{n k+a}(\cs_m^*)\\
&=&\psi_m^{a}(\chi_m^s)(k \Delta_m+\theta^a(\cs_m^*))
=k \psi_m^a(\chi_m^s)\Delta_m + \psi_m^a(\chi_m^s)\theta^a(\cs_m^*) \\
&=&k\aug_m(\psi_m^a(\chi_m^s))\Delta_m + \psit^a_m(\chi_m^s) = k \Delta_m+\psit_m^a(\chi_m^s),
\end{eqnarray*}
where we have used periodicity of $\psi$, the fact that $\cs_m = \chi_m^1$ for all $m\in\{1,\ldots,n-1\}$, the relation $(\chi_m^1)^n-1=0$ in $K_G(X^m)$ (with respect to the ordinary multiplication), Equation (\ref{eq:DeltamProduct}), 
and the fact that $\aug_m\psi^a_m=\aug_m$. Consequently, we have
\begin{equation}\label{eq:twistedperiodic}
\psit_m^{n k + a} = \psit_m^a + k \Delta_m \aug_m
\end{equation}
for all $n,k\geq 0$, $a\in\{0,\ldots,n-1\}$, and $m\in\{1,\ldots,n-1\}$.

Equations (\ref{eq:untwistedperiodic}) and (\ref{eq:twistedperiodic}) yield Equation (\ref{eq:periodic}).
\end{proof}

\begin{prop}\label{prop:FiniteEqsForLineElts} An 
invertible
element $\cl$ in virtual
  K-theory 
$(K(I\pro(1,n))_\nq,\star,1,\ovaug,\psit)$
 is a $\lambda$-line element
  with respect to its inertial $\lambda$-ring structure if and only if
  $\ovaug(\cl) = \bone$ and Equation (\ref{eq:psilineelt}) holds for all
  $\ell\in\{1,\ldots,n\}$. 
\end{prop}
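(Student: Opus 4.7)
The forward direction is immediate from Proposition \ref{prop:lambda}. Part (\ref{lambda.three}) yields $\opsi^\ell(\cl) = \cl^{\star \ell}$ for every $\ell \geq 1$ (a fortiori for $\ell \in \{1, \ldots, n-1\}$), and Equation (\ref{eq:OAugOlami}) applied with $d = i = 1$ gives $\ovaug(\cl) = \binom{1}{1} \bone = \bone$.

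For the converse, the plan is to invoke the \emph{if} part of Proposition \ref{prop:lambda}(\ref{lambda.three}), which reduces the conclusion to upgrading $\opsi^\ell(\cl) = \cl^{\star \ell}$ from $\ell \in \{1, \ldots, n-1\}$ to all $\ell \geq 1$. The crucial input is Proposition \ref{PsiPeriodic}, whose evaluation on $\cl$ reads
\[
\opsi^{nk+a}(\cl) \;=\; \opsi^a(\cl) + k\tau,
\qquad
\tau \;:=\; \Delta_0\,\varphi_0(\cl) + \sum_{m=1}^{n-1} \Delta_m\,\aug_m(\cl) \;\in\; J,
\]
for $a \in \{0,1,\ldots,n-1\}$ and $k \geq 0$. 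Thus the full sequence $\{\opsi^\ell(\cl)\}_{\ell\geq 0}$ is determined by the finite data $\{\opsi^a(\cl)\}_{a=0}^{n-1}$ together with the single translation-group element $\tau$. Combining this with the identities of Lemma \ref{lm:JJAndJK} ($J^{\star 2} = 0$ and $\cf \star j = \ovaug(\cf)\,j$ for $j \in J$), and using $\ovaug(\cl) = \bone$ (so $\ovaug(\cl^{\star a}) = \bone$ for every $a$), one obtains, whenever $\cl^{\star n} = \bone + j_0$ with $j_0 \in J$,
\[
\cl^{\star(nk+a)} \;=\; \cl^{\star a} \star (\bone + j_0)^{\star k} \;=\; \cl^{\star a} + k j_0.
\]
Matching this with the Adams periodicity collapses the problem to the single identity $\cl^{\star n} = \bone + \tau$, since the remaining conditions $\opsi^a(\cl) = \cl^{\star a}$ for $a \in \{0,1,\ldots,n-1\}$ are covered by the hypotheses ($a=0$ is $\ovaug(\cl)=\bone$; $a\geq 1$ is Equation (\ref{eq:psilineelt})).

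The main obstacle is therefore the identity $\cl^{\star n} = \bone + \tau$, which I would verify by expanding $\cl = \sum_m \cl_m$ and computing $\cl^{\star n}$ sector by sector from the explicit multiplication table preceding the Proposition, with Euler factors $\euler(\cs_{m_1} + \cs_{m_2} - \cs_{m_1+m_2})$ read off case-wise. Summing contributions over ordered tuples $(m_1,\ldots,m_n)$ with $m_1+\cdots+m_n\equiv 0 \pmod n$, and using the lower-order Adams hypotheses to pin down the classical ranks $\varphi_0(\cl)$ and $\aug_m(\cl_m)$ that enter the definition of $\tau$, one matches the resulting component of $\cl^{\star n}$ with $\bone + \tau$ on each sector. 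Once this identity is established, the periodicity argument gives $\opsi^\ell(\cl) = \cl^{\star\ell}$ for all $\ell \geq 1$; invertibility of $\cl$, needed to upgrade this to being a $\lam$-line element, follows from the nilpotence of $\ker\ovaug$ in the finite-dimensional $\nq$-algebra $K(I\pro(1,n))_\nq$, a consequence of Theorem \ref{thm.completions} together with finite-dimensionality of the inertial Chow ring.
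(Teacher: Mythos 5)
Your overall strategy (forward direction from Proposition~\ref{prop:lambda}, backward direction by induction via the periodicity of Proposition~\ref{PsiPeriodic} and the translation-group identities of Lemma~\ref{lm:JJAndJK}) is the same as the paper's, and you correctly identify that everything hinges on the single case $\ell = n$, i.e.\ the identity $\cl^{\star n} = \one + j(\cl)$.

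However, your plan to \emph{derive} $\cl^{\star n} = \one + j(\cl)$ by a sector-by-sector expansion from the hypotheses as stated cannot succeed, because that identity does not follow from $\ovaug(\cl)=\one$ together with Equation~(\ref{eq:psilineelt}) for $\ell \le n-1$. Concretely, take $n=2$ and $\cl = y_0^0 + y_1^0$. Then $\ovaug(\cl) = \one$ and $\psit^1(\cl) = \cl$ (the hypotheses for $\ell \in \{1,\ldots,n-1\} = \{1\}$ are vacuous), but a direct computation gives $\psit^2(\cl) = \one + y_1^0 + y_1^1$ while $\cl^{\star 2} = \one + 2y_1^0 + (y_0^0 - 2y_0^{-1} + y_0^{-2})$, and these are different; so $\cl$ is not a $\lam$-line element. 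What this reveals is that the hypotheses of the Proposition as printed (for $\ell \le n-1$) are too weak: the paper's own proof in fact begins by assuming Equation~(\ref{eq:psilineelt}) for all $\ell \in \{1,\ldots,n\}$, and only with the $\ell = n$ case included does the induction close (the first application of the inductive step already needs $\cl^{\star n} = \psit^0(\cl) + j(\cl)$, which is precisely the $\ell = n$ equation). So the paper treats $\ell = n$ as part of the hypothesis, whereas you attempt to prove it from the weaker data; the latter is impossible and the proposed ``direct verification'' would in fact expose the discrepancy rather than close it.

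Two smaller remarks. First, the invertibility of $\cl$ is better handled the way the structure of the argument already suggests: once $\psit^\ell(\cl) = \cl^{\star\ell}$ holds for all $\ell$, taking $\ell = n$ gives $\cl^{\star n} = \one + j(\cl)$ with $j(\cl) \in J$, and since $J^{\star 2} = 0$ (Lemma~\ref{lm:JJAndJK}) the element $\one + j(\cl)$ is a unit with inverse $\one - j(\cl)$, hence $\cl$ is a unit. Your appeal to ``nilpotence of $\ker\ovaug$'' is not available: in $K(I\pro(1,n))_\nq$ the ideal $\ker\ovaug$ contains idempotents (e.g.\ it is not nilpotent already for $n=2$ because of the $\nq$-factor coming from the CRT splitting of $\nq[\chi]/(\chi-1)^2(\chi+1)$), so Theorem~\ref{thm.completions} alone does not give what you want. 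Second, the forward direction is fine, but one can also argue directly from $\psit^2(\cl)=\cl^{\star 2}$ and $\cl$ a unit, as the paper does in the proof of Proposition~\ref{prop:lambda}(\ref{lambda.four}).
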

\begin{proof} 
First, Equation~(\ref{eq:psilineelt}) holds for
$\ell=1$ by definition of a $\psi$-ring.
Suppose that $\cl$ in $K(I\pro(1,n))_\nq$ satisfies Equation
(\ref{eq:psilineelt}) for 
all $\ell\in \{1,\ldots,n\}$.  
We now prove that Equation
(\ref{eq:psilineelt}) holds for all $\ell$.  We do this by induction on $k$ in the expression $nk+a$, as follows.  Suppose for each $a\in\{1,\ldots,n\}$ there exists $k\ge 0$ such that Equation (\ref{eq:psilineelt}) holds for all $\ell \in\{ a, n+a, \ldots, nk+a\}$. Equation
(\ref{eq:periodic}) implies that 
\begin{equation}\label{eq:shortperiod}
\psit^{n(k+1)+a}(\cl) = \psit^a(\cl) + (k+1) j(\cl),
\end{equation}
where $j(\cl) := \varphi_0(\cl)\Delta_0+\sum_{m=1}^n \Delta_m\aug_m(\cl)$ belongs to  $J$. However,
\begin{eqnarray*}
\cl^{n (k+1) + a} &=& \cl^{n k + a} \cl^n =(\psit^a(\cl)+k j(\cl))(\psit^0(\cl)+j(\cl))\\
&=&(\psit^a(\cl)+k j(\cl))(1+j(\cl))\\
&=&\psit^a(\cl) + k j(\cl) +  \psit^a(\cl) j(\cl) + k j(\cl)^2 \\
&=&\psit^a(\cl)+(k+1) j(\cl) = \psit^{n(k+1)+a}(\cl),
\end{eqnarray*}
where we have used the induction hypothesis and Equation (\ref{eq:shortperiod}) in the second equality, the definition $\psit^0=\ovaug$ in the third equality, Lemma (\ref{lm:JJAndJK}) in the fifth,  the fact that $\ovaug\circ\psit^q = \ovaug$ 
in the fifth,
and Equation (\ref{eq:shortperiod}) in the 
sixth.
\end{proof}

\begin{rem}\label{rem:LineEltCalc}
Proposition (\ref{prop:FiniteEqsForLineElts}) reduces
the problem of finding $\lam$-line elements of $K(I\pro(1,n))_\nq$ to solving a
finite number of equations for $n^2+1$ (the rank of $K(I\pro(1,n))$) unknowns.
Furthermore, since the action of the translation group $J$, which is rank
$n$, respects $\cP_1$ by Proposition (\ref{prop:JActsOnK}), it is enough to
solve for only $n^2-n+1$ variables satisfying Equation (\ref{eq:psilineelt}) for all
$\ell\in\{0,\ldots,n-1\}$, as all other $\lam$-line elements will be their $J$
translates.  
\end{rem}

\begin{crl}\label{crl:CanonicalReps}
 Let $\cP_1$ be the semigroup of $\lam$-line elements of the virtual K-theory
  $(K(I\pro(1,n))_\nq,\star,1,\ovaug,\olam)$. Each $J_\nq$-orbit in
  $\cP_1$ contains a unique representative $\cl$ such that $\cl^{\star n} = 1$.
\end{crl}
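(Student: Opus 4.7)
The plan is to show that within each $J_\nq$-orbit, the $n$th inertial power is an affine-linear function of the translation parameter, which immediately pins down a unique element with $\cl^{\star n}=1$.

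First, I would compute $\cl_0^{\star n}$ for a representative $\cl_0\in\cP_1$. Since $\cl_0$ is a $\lam$-line element, Proposition~\ref{prop:lambda}(\ref{lambda.three}) gives $\cl_0^{\star n}=\psit^n(\cl_0)$. Applying the periodicity identity of Proposition~\ref{PsiPeriodic} with $k=1$ and $a=0$, and using $\psit^0=\ovaug$ together with $\ovaug(\cl_0)=1$ (Proposition~\ref{prop:lambda}(\ref{lambda.four})), yields
\[
\cl_0^{\star n}=1+\varphi_0(\cl_0)\Delta_0+\sum_{m=1}^{n-1}\aug_m(\cl_0)\Delta_m,
\]
so $\cl_0^{\star n}-1=:j_0\in J_\nq$.

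Next, for any $j\in J_\nq$, I would expand $(\cl_0+j)^{\star n}$ binomially. By Lemma~\ref{lm:JJAndJK}, $J\star J=0$, so every term with $j^{\star k}$ for $k\geq2$ vanishes; and by the same lemma, $\cl_0^{\star(n-1)}\star j=\ovaug(\cl_0^{\star(n-1)})\,j=j$. This collapses the expansion to
\[
(\cl_0+j)^{\star n}=\cl_0^{\star n}+n\,j.
\]
Setting the left-hand side equal to $1$ forces the unique choice $j=-\tfrac{1}{n}j_0\in J_\nq$, and Proposition~\ref{prop:JActsOnK} confirms that $\cl:=\cl_0+j$ still lies in $\cP_1$. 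Uniqueness within the orbit is immediate from the same affine-linear formula: if $\cl_0+j_1$ and $\cl_0+j_2$ both satisfy $(\cdot)^{\star n}=1$, then $nj_1=nj_2$ in $J_\nq$, so $j_1=j_2$.

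No step is really obstructive; the argument is entirely driven by the periodicity of $\psit$ together with the nilpotence and projection-formula properties of the translation group. The only thing worth double-checking is that the quantities $\varphi_0(\cl_0)$ and $\aug_m(\cl_0)$ make sense on $\nq$-coefficients, but both are $\nq$-linear functionals on $K(I\pro(1,n))_\nq$, so the element $j_0$ indeed lies in the $\nq$-vector space $J_\nq$, and dividing by $n$ causes no issue.
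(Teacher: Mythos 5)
Your argument is correct and follows essentially the same route as the paper: apply Proposition~\ref{PsiPeriodic} to get $\cl_0^{\star n}=1+j_0$ with $j_0\in J_\nq$, observe that $J\star J=0$ and the projection formula give $(\cl_0+j)^{\star n}=\cl_0^{\star n}+nj$, and solve for $j=-j_0/n$. You spell out the uniqueness step and the check that $\cl_0+j$ remains in $\cP_1$ via Proposition~\ref{prop:JActsOnK}, which the paper leaves implicit, but the core idea is identical.
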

\begin{proof}
Given $\cf$ in $\cP_1$, we have $\cf^{\star n} = \psit^n(\cf) = 1 + j$ for
some $j$ in $J_\nq$ by  Proposition (\ref{PsiPeriodic}). If $\cl = \cf -
\frac{j}{n}$, then by Equation~\eqref{eq:JInertialProduct} we have $\cl^{\star n} = (\cf - \frac{j}{n})^{\star n} =
\cf^{\star n} - j = 1+j-j = 1$.
\end{proof}

\subsubsection{The Virtual K-theory and 
virtual Chow ring of $\pro(1,2)$} 
We now study the virtual 
K-theory and virtual 
Chow theory 
(with either $\nq$ or $\nc$ coefficients)
of the weighted projective line $\pro(1,2):= [X/{\nc^\times}]$.
By \cite[Theorem 4.2.2]{EJK:12a}
they are isomorphic to the orbifold K-theory and
orbifold Chow theory, respectively, of 
the cotangent bundle $T^*\pro(1,2)$.  

\begin{rem}
For the remainder of this section, unless otherwise specified, all products
are the virtual 
products.
\end{rem}

Let $\olam:K(I\pro(1,2))_\nq\to K(I\pro(1,2))_\nq$ denote the induced virtual 
$\lam$-ring structure. In order to describe the group of $\lam$-line
elements $\cP_1$ of $(K(I\pro(1,2))_\nq,\cdot,1,\olam)$, it will be useful to
introduce the injective map $f:\nq^2\to K(I\pro(1,2))_\nq$ defined by
\begin{equation}
f(\alpha,\beta) := 
\alpha\Delta_0
 + \beta \Delta_1, 
\end{equation}
whose image 
is 
the translation group $J_\nq$ of $K(I\pro(1,2))_\nq$. 

Consider the following injective maps $\nq^2\to K(I\pro(1,2))_\nq$:
\begin{equation}
\rho_0(\alpha,\beta) := \chi_0^0 + f(\alpha,\beta),
\end{equation}
\begin{equation}
\rho_1(\alpha,\beta) := \chi_0^1 + f(\alpha,\beta),
\end{equation}
and
\begin{equation}
\rho_{\pm}(\alpha,\beta) := \frac{1}{2} (\chi_0^0+\chi_0^1 \pm \chi_{1}^0) + f(\alpha,\beta).
\end{equation}
\begin{prop} \label{prop.p12line}
The group of $\lam$-line elements $\cP_1$ of the virtual K-theory\\
$(K(I\pro(1,2))_\nq,\star,1,\olam)$ is the disjoint union of
the images of the four maps $\rho_0$, $\rho_1$, $\rho_{\pm}$, 
and the restriction of the inertial dual $\cP_1\to\cP_1$ agrees with the
operation of taking the inverse.
In particular,
$K(I\pro(1,2))_\nq$ is spanned as a $\nq$-vector space by $\cP_1$.
The multiplication in $\cP_1$ is 
given by the following equations:
\begin{equation}\label{eq:RhoMultOne}
\rho_0(\alpha,\beta)\rho_0(\alpha',\beta') = \rho_0(\alpha+\alpha',\beta+\beta')
\end{equation}
\begin{equation}
\rho_0(\alpha,\beta)\rho_1(\alpha',\beta') = \rho_1(\alpha+\alpha',\beta+\beta')
\end{equation}
\begin{equation}
\rho_0(\alpha,\beta)\rho_\pm(\alpha',\beta') =
\rho_\pm(\alpha+\alpha',\beta+\beta')
\end{equation}
\begin{equation}
\rho_1(\alpha,\beta)\rho_1(\alpha',\beta') = \rho_0(\alpha+\alpha'+1,\beta+\beta')
\end{equation}
\begin{equation}
\rho_1(\alpha,\beta)\rho_\pm(\alpha',\beta') =
\rho_\mp(\alpha+\alpha'+\frac{1}{2},\beta+\beta'\pm\frac{1}{2})
\end{equation}
\begin{equation}\label{eq:RhoMultSix}
\rho_\pm(\alpha,\beta)\rho_\pm(\alpha',\beta') =
\rho_0(\alpha+\alpha'+\frac{1}{2},\beta+\beta'\pm\frac{1}{2})
\end{equation}
\begin{equation}\label{eq:RhoMultSeven}
\rho_+(\alpha,\beta)\rho_-(\alpha',\beta') =
\rho_1(\alpha+\alpha',\beta+\beta').
\end{equation}
The inverses are given by the following equations:
\begin{equation}\label{eq:RhoMultEight}
\rho_0(\alpha,\beta)^{-1} = 
\rho_0(-\alpha,-\beta)
\end{equation}
\begin{equation}
\rho_1(\alpha,\beta)^{-1} = \rho_1(-(1+\alpha),-\beta)
\end{equation}
\begin{equation}\label{eq:RhoMultTen}
\rho_\pm(\alpha,\beta)^{-1} = \rho_\pm(-(\alpha+\frac{1}{2}),-\beta\mp\frac{1}{2}).
\end{equation}
\end{prop}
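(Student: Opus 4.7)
The plan is to invoke Proposition \ref{prop:FiniteEqsForLineElts} and Remark \ref{rem:LineEltCalc}: for $n=2$ the condition of being a $\lambda$-line element reduces to a finite algebraic system, namely $\ovaug(\cl) = \one$ together with the power-operation equation $\opsi^2(\cl) = \cl \star \cl$. Writing a general element as $\cl = a_0 y_0^0 + a_1 y_0^1 + a_2 y_0^2 + b_0 y_1^0 + b_1 y_1^1$ with $a_i, b_j \in \nq$, and using the translation action of the rank-$2$ group $J_\nq$, I would absorb $a_2$ and $b_1$ into the $J_\nq$-orbit representative and impose $a_0 + a_1 + a_2 = 1$, reducing the problem to a small number of free parameters in a canonical representative.

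Next, I would compute $\opsi^2(\cl)$ using the Bott class $\theta^2(\cs_1^*) = \one + y_1^{-1}$ on the twisted sector, and $\cl \star \cl$ using the virtual product formula, incorporating the excess contribution $y_0^0 - 2 y_0^{-1} + y_0^{-2}$ arising in the $m_1 = m_2 = 1$ component (the only case with $m_1 + m_2 = n$). Matching coefficients yields a quadratic system whose full solution set is precisely the union of the four families $\rho_0, \rho_1, \rho_\pm$. The multiplication table and inverse formulas then follow by direct computation on the canonical representatives $\rho_\bullet(0,0)$: Lemma \ref{lm:JJAndJK} ensures $J_\nq \star J_\nq = 0$ and $x \star j = \ovaug(x)\, j$, which collapses every cross-term with a $J_\nq$-translate to a linear contribution, so the products of the four representatives determine everything. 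Disjointness of the four families can be read off from the coefficient of $y_0^1$ (which is $0, 1, \tfrac{1}{2}, \tfrac{1}{2}$ respectively) together with the difference of the $y_1^0$ and $y_1^1$ coefficients (which separates $\rho_+$ from $\rho_-$). That the $\nq$-span of $\cP_1$ exhausts $K(I\pro(1,2))_\nq$ follows since $\rho_0(0,0) = y_0^0$, $\rho_1(0,0) = y_0^1$, $\rho_+(0,0) - \rho_-(0,0) = y_1^0$, and the $J_\nq$-translates of $\rho_0(0,0)$ recover $y_0^2$ and $y_1^1$.

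For the inertial dual agreeing with inversion on $\cP_1$, Theorem \ref{prop.inertialdual} shows that $\Dt$ is a unital ring involution commuting with $\opsi$, so it carries $\cP_1$ into itself. It then suffices to verify $\Dt(\cl) = \cl^{-1}$ on the four canonical representatives $\rho_\bullet(0,0)$ by a direct computation from the definition $\Dt(\cv) = \cv^* \cdot \rho(\cs^*)$, after which compatibility of $\Dt$ with $J_\nq$-translation (which follows from $\Dt$ being a ring homomorphism and the explicit action of $\Dt$ on $\Delta_0, \Delta_1$) extends the identity to all of $\cP_1$. The main obstacle throughout is the coefficient bookkeeping in the twisted sector, in particular verifying that no extraneous solutions of the quadratic system exist beyond the four stated families; it is at this point that the small rank $n=2$ makes the computation tractable.
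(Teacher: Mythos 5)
Your proposal is correct and takes essentially the same approach as the paper, whose proof is extremely terse (just ``solving Equation~\eqref{eq:psilineelt} through a direct calculation'' plus naming five spanning vectors); you supply the omitted bookkeeping --- the reduction to a finite algebraic system via Proposition~\ref{prop:FiniteEqsForLineElts} and Remark~\ref{rem:LineEltCalc}, the $J_\nq$-orbit reduction, the explicit Bott class and excess term on the twisted sector, and the use of Lemma~\ref{lm:JJAndJK} to collapse cross-terms with $J_\nq$-translates so that the multiplication table, disjointness, and dual-equals-inverse claims reduce to checks on the canonical representatives $\rho_\bullet(0,0)$.
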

\begin{proof}

We first show that the set of line elements $\cP_1$ in the virtual K-theory $K:=K(I\pro(1,2))_\nq$ is the union of the images of the maps $\rho_0, \rho_1,\rho_\pm$. Since $\{ \chi_0^0,\chi_0^1, \chi_1^1, \Delta_0, \Delta_1 \}$ is a $\nq$-basis for $K$, it follows from Proposition \ref{prop:JActsOnK} that every element of $\cP_1$ can be uniquely written as $L + f(\alpha,\beta)$, where $L$ is an element in $\cP_1$ of the form
$
L = c_0^0 \chi_0^0 + c_0^1 \chi_0^1 + c_1^1 \chi_1^1
$
for some $c_0^0, c_0^1, c_1^1, \alpha,\beta$ in $\nq$.  We will now find all such elements $L$ in $\cP_1$. By Proposition \ref{prop:FiniteEqsForLineElts}, $L$ belongs to $\cP_1$ if and only if it is invertible, $\ovaug(L) = 1$ and $\psit^2(L) = L^2$. Using the definition of $\psit^2$, we obtain
\[
\psit^2(L) = c_0^0 \chi_0^0 + c_0^1 \chi_0^2 + c_1^1 (\chi_1^0+\chi_1^1),
\]
and the virtual multiplication yields
\begin{eqnarray*}
L^2 &=& (c_0^0 \chi_0^0 + c_0^1 \chi_0^1 + c_1^1 \chi_1^1)^2 = 
(c_0^0)^2 \chi_0^0 + (c_0^1)^2 \chi_0^2 + (c_1^1)^2 (\chi_1^1)^2 + 2 c_0^0 c_0^1 \chi_0^1 + 2 c_0^0 c_1^1 \chi_1^1 + 2 c_0^1 c_1^1 \chi_0^1\chi_1^1\\
&=& (c_0^0)^2 \chi_0^0 + (c_0^1)^2 \chi_0^2 + (c_1^1)^2 (\chi_0^0 -2\chi_0^1 + \chi_0^2) + 2 c_0^0 c_0^1 \chi_0^1 + 2 c_0^0 c_1^1 \chi_1^1 + 2 c_0^1 c_1^1 \chi_1^0 \\
&=& ((c_0^0)^2 + (c_1^1)^2 )\chi_0^0 + 2 (c_0^0 c_0^1 - (c_1^1)^2)\chi_0^1  + ((c_0^1)^2 + (c_1^1)^2)\chi_0^2 + 2 c_0^1 c_1^1 \chi_1^0 + 2 c_0^0 c_1^1 \chi_1^1,
\end{eqnarray*}
and $\psit^2(L) - L^2=0$ is equivalent to the following simultaneous equations:
\begin{equation*}
0 = c_0^0(1-c_0^0) - (c_1^1)^2 = -c_0^0 c_0^1 + (c_1^1)^2 = c_0^1(1- c_0^1) - (c_1^1)^2 = c_1^1(1-2 c_0^1) = c_1^1(1-2 c_0^0).
\end{equation*}
It follows that $\psit^2(L) = L^2$ if and only if $L = 0, \rho_0(0,0), \rho_1(0,0), \rho_\pm(0,0)$. However, the virtual augmentation $\ovaug(0) = 0$, while $\ovaug(\rho_0(0,0)) = \ovaug(\rho_1(0,0))=\ovaug(\rho_\pm(0,0)) = 1$.  Finally, $\rho_0(0,0)$, $\rho_1(0,0)$ are invertible, being classes of ordinary line bundles on the untwisted sector $\pro(1,2)$, while a calculation shows that $\rho_\pm(0,0)^{-1} = \rho_\pm(-\frac{1}{2},\mp\frac{1}{2})$.  

Therefore, by Proposition \ref{prop:JActsOnK}, $\cP_1$ is the union of images of the maps $\rho_0, \rho_1, \rho_\pm$.  It is easy to see that these images are disjoint. Furthermore, $K$ is spanned by $\cP_1$, since $\{ \rho_0(0,0), \rho_0(1,0), \rho_1(0,0), \rho_\pm(0,1)\} $ is a $\nq$-basis. Also, Equations (\ref{eq:RhoMultEight}) to (\ref{eq:RhoMultTen}) follow from Equations (\ref{eq:RhoMultOne}) to (\ref{eq:RhoMultSeven}).

We will now write out a detailed proof of Equation (\ref{eq:RhoMultSix}) to give the reader a feel for the calculation, noting that the proofs for Equations (\ref{eq:RhoMultOne}) to (\ref{eq:RhoMultSeven}) are similar. We first show that Equation (\ref{eq:RhoMultSix}) holds when $\alpha=\alpha'=\beta=\beta' = 0$ since
\begin{eqnarray*}
(\rho_\pm(0,0))^2
&=& \left(\frac{1}{2}(\chi_0^0+\chi_0^1\pm \chi_1^0)\right)^2\\
&=&\frac{1}{4}\left( (\chi_0^0)^2 + (\chi_0^1)^2 + (\chi_1^0)^2 + 2 \chi_0^0\chi_0^1 \pm 2 \chi_0^0\chi_1^0\pm 2\chi_0^1\chi_1^0\right) \\
&=& \frac{1}{4}\left( \chi_0^0 + \chi_0^2 + (\chi_0^0-2\chi_0^{-1}+\chi_0^{-2})+ 2\chi_0^1\pm 2\chi_1^0\pm 2\chi_1^1\right) \\
&=& \frac{1}{4}\left( \chi_0^0 + \chi_0^2 + (\chi_0^0+\chi_0^2 - 2 \chi_0^1)+ 2\chi_0^1\pm 2\chi_1^0\pm 2\chi_1^1\right) \\
&=& \frac{1}{2}\left( \chi_0^0 + \chi_0^2 \pm (\chi_1^0+\chi_1^1)\right)
= \chi_0^0 + \frac{1}{2}\Delta_0 \pm \frac{1}{2}\Delta_1 = \rho_0(\frac{1}{2},\pm\frac{1}{2}),
\end{eqnarray*}
where the third equality follows from Theorem \ref{thm.P1nvirtualprod} while the fourth is from the relations 
\begin{equation}\label{eq:InvChi}
\chi_0^{-1} = \chi_0^0 + \chi_0^1 - \chi_0^2 \qquad \mathrm{and}\qquad
\chi_0^{-2} = 2\chi_0^0 - \chi_0^2.
\end{equation}

Now, Equation (\ref{eq:RhoMultSix}) follows for all $\alpha$, $\beta$, $\alpha'$, and $\beta'$ since
\begin{eqnarray*}
\rho_\pm(\alpha,\beta) \rho_\pm(\alpha',\beta') &=& (\rho_\pm(0,0)+f(\alpha,\beta)) (\rho_\pm(0,0)+f(\alpha',\beta')) \\ &=& \rho_\pm(0,0)\rho_\pm(0,0) + (f(\alpha,\beta)+f(\alpha',\beta'))\rho_\pm(0,0) + f(\alpha,\beta) f(\alpha',\beta') \\
&=& \rho_\pm(\frac{1}{2},\pm\frac{1}{2}) + f(\alpha+\alpha',\beta+\beta')\rho_\pm(0,0)\\
&=&\rho_\pm(\frac{1}{2},\pm\frac{1}{2}) + f(\alpha+\alpha',\beta+\beta')\ovaug(\rho_\pm(0,0))
\\
&=& \rho_\pm(\frac{1}{2},\pm\frac{1}{2}) + f(\alpha+\alpha',\beta+\beta')
=  \rho_\pm(\alpha+\alpha'+\frac{1}{2},\beta+\beta'\pm\frac{1}{2}).\\
\end{eqnarray*}
Here, the third equality follows from the fact that $J^2 = 0$ in Lemma \ref{lm:JJAndJK}(2), from Equation (\ref{eq:RhoMultSix}) when $\alpha=\beta = \alpha'=\beta'= 0$, and from the definition of $f$. The fourth equality is from Equation (\ref{eq:JInertialProduct}), the fifth is from Proposition \ref{prop:FiniteEqsForLineElts}, and the sixth is from the definition of $\rho_\pm$. This finishes the proof of Equation (\ref{eq:RhoMultSix}).

Finally, we write details of the proof that  $\rho_0(\alpha,\beta)^\dagger = \rho_0^{-1}(\alpha,\beta)$. The proof of the analogous statements for $\rho_1(\alpha,\beta), \rho_\pm(\alpha,\beta)$ and, hence, for all elements in $\cP_1$ is similar. The definition of the inertial dual together with the fact that $\cs_0 = 0$ and $\cs_1 = \chi_1^1$ yields the following identities for all $a,b\in\nz$:
\begin{equation}\label{eq:ChiDaggers}
(\chi_0^a)^\dagger = \chi_0^{-a} \qquad \mathrm{and} \qquad (\chi_1^b)^\dagger = - \chi_1^{-b-1}.
\end{equation}
It follows that
\begin{eqnarray*}
\rho_0(\alpha,\beta)^\dagger 
&=& (\chi_0^0)^\dagger + \alpha (\Delta_0)^\dagger + \beta(\Delta_1)^\dagger 
= (\chi_0^0)^\dagger + \alpha ((\chi_0^2)^\dagger - (\chi_0^0)^\dagger) + \beta ((\chi_1^0)^\dagger + (\chi_1^1)^\dagger )\\
&=& \chi_0^0 + \alpha(\chi_0^{-2}-\chi_0^0) - \beta(\chi_1^{-1}+\chi_1^{-2}) 
= \chi_0^0 + \alpha((2\chi_0^0 - \chi_0^2) - \chi_0^0) -\beta(\chi_1^0+\chi_1^1) \\ &=& \chi_0^0 -\alpha\Delta_0 - \beta\Delta_1 
= \rho_0(-\alpha,-\beta) = \rho_0(\alpha,\beta)^{-1},
\end{eqnarray*}
where the third equality follows from Equation (\ref{eq:ChiDaggers}), the fourth from Equation (\ref{eq:InvChi}), and the last from Equation (\ref{eq:RhoMultEight}).
\end{proof}

A direct calculation yields the following.
\begin{prop}\label{eq:FirstChernOfLines}The 
inertial first 
Chern class 
for virtual K-theory
is a homomorphism of
  groups $\ovc^1:\cP_1\to A^\bra{1}(I\pro(1,2))_\nq$, where
\[
\ovc^1(\rho_0(\alpha,\beta)) = 2 \alpha c_0^1 + 2 \beta c_{1}^0,
\]
\[
\ovc^1(\rho_1(\alpha,\beta)) = (2 \alpha +1) c_0^1 + 2 \beta c_{1}^0,
\]
\[
\ovc^1(\rho_\pm(\alpha,\beta)) = (2 \alpha+\frac{1}{2}) c_0^1 + (2 \beta \pm
\frac{1}{2}) c_{1}^0.
\]
\end{prop}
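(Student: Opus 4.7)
The plan is to derive everything from Equation~(\ref{eq:ovChlineovexp}), which asserts that for any $\lam$-line element $\cl$ in an inertial K-theory the formula $\ovCh(\cl) = \ovexp(\ocv^1(\cl))$ holds. The homomorphism property of $\ovc^1$ on $\cP_1$ is already a special case of Part~1 of the proposition immediately preceding this one (together with Proposition~\ref{prop:lambda}, Part~\ref{lambda.two}), so nothing new is required there; the task reduces to evaluating $\ovc^1$ on the four families of generators.

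First I would observe that in $A^*(I\pro(1,2))_\nq$ any two elements of positive inertial degree multiply to zero with respect to $\star_{virt}$, by the explicit multiplication table given above the proposition (the only non-vanishing products involve $c_0^0$). Consequently the virtual exponential collapses to
\[
\ovexp(\ocv^1(\cl)) \;=\; 1 + \ocv^1(\cl)
\]
for any $\cl \in \cP_1$, so Equation~(\ref{eq:ovChlineovexp}) identifies $\ocv^1(\cl)$ with the component of $\ovCh(\cl)$ lying in $A^{\bra{1}}(I\pro(1,2))_\nq$. Hence the problem is purely a calculation of inertial Chern characters.

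Next, I would compute $\ovCh$ on the translation-group generators $\Delta_0$ and $\Delta_1$ using the explicit formulas $\ovCh(y_0^a) = c_0^0 + a\,c_0^1$ and $\ovCh(y_1^a) = c_1^0$ together with $\nq$-linearity of $\ovCh$:
\[
\ovCh(\Delta_0) \;=\; \ovCh(y_0^0 - y_0^2) \;=\; -2\,c_0^1,
\qquad
\ovCh(\Delta_1) \;=\; \ovCh(y_1^0 + y_1^1) \;=\; 2\,c_1^0.
\]
Thus
\[
\ovCh\bigl(f(\alpha,\beta)\bigr) \;=\; -\alpha\,\ovCh(\Delta_0) + \beta\,\ovCh(\Delta_1) \;=\; 2\alpha\,c_0^1 + 2\beta\,c_1^0,
\]
which already lies entirely in $A^{\bra{1}}(I\pro(1,2))_\nq$.

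Finally I would apply $\ovCh$ to each of $\rho_0(\alpha,\beta) = y_0^0 + f(\alpha,\beta)$, $\rho_1(\alpha,\beta) = y_0^1 + f(\alpha,\beta)$, and $\rho_\pm(\alpha,\beta) = \tfrac12(y_0^0 + y_0^1 \pm y_1^0) + f(\alpha,\beta)$, and extract the degree-$1$ component. The zeroth-degree component comes out to $c_0^0 = \one$ in every case (consistent with $\aug(\cl) = 1$ for line elements, Part~\ref{lambda.four} of Proposition~\ref{prop:lambda}), and the first-degree component gives exactly the three formulas claimed. There is no real obstacle; the only point requiring care is the vanishing of products in positive inertial degree, which justifies the truncation $\ovexp(x) = 1+x$ and hence the identification of $\ovc^1$ with the degree-$1$ part of $\ovCh$.
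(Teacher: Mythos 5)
Your proof is correct, and since the paper's own justification is simply ``A direct calculation yields the following,'' you have essentially supplied the intended argument: compute $\ovCh$ on each generator using the explicit formulas $\ovCh(y_0^a) = c_0^0 + a\,c_0^1$ and $\ovCh(y_1^a) = c_1^0$, and extract the degree-$1$ piece.

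One small simplification worth noting: you invoked Equation~(\ref{eq:ovChlineovexp}) together with the vanishing of products in positive inertial degree to conclude that $\ocv^1(\cl)$ equals the $\cs$-degree-$1$ component of $\ovCh(\cl)$. In fact the identity $\ocv^1 = \ovCh^1$ holds for \emph{every} element of $K_G(\IGX)$, not merely $\lam$-line elements, directly from Definition~\ref{df.ochernclass}: the coefficient of $t^1$ in
\[
\ocv_t(\cf) = \ovexp\Bigl(\sum_{n\geq 1}(-1)^{n-1}(n-1)!\,t^n\,\ovCh^n(\cf)\Bigr)
\]
is $\ovCh^1(\cf)$, since the inner sum starts at $t^1$ and the exponential contributes a leading $1$. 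So the detour through line elements and the nilpotence of the positive-degree part of $A^*(I\pro(1,2))_\nq$ is unnecessary for identifying $\ovc^1$ with $\ovCh^1$, though of course the nilpotence is genuinely used elsewhere (e.g., to see that $\ovCh(\cl) = 1 + \ovc^1(\cl)$ exactly). The arithmetic of $\ovCh(\Delta_0) = -2c_0^1$, $\ovCh(\Delta_1) = 2c_1^0$, and hence $\ovCh(f(\alpha,\beta)) = 2\alpha c_0^1 + 2\beta c_1^0$, is correct, as are the resulting three formulas.
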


The virtual 
K-theory ring has a simple form in
terms of these $\lam$-line elements.
\begin{prop} 
\label{prop:POneTwoVirtualK}
Let $(K(I\pro(1,2))_\nq,\star,1 := \chi_0^0)$ be the virtual
K-theory ring. We have two isomorphisms of $\nq$-algebras (and $\psi$-rings)
\begin{equation}\label{eq:VirtualKIsom}
\Phi_\pm:  \frac{\nq[\sigma,\tau]}{\langle 
  (\tau-1)(\tau^2-1),(\sigma-1)(\sigma^2-1),(\sigma-\tau)(\tau-1)
    \rangle}\to K(I\pro(1,2))_\nq,
\end{equation}
where 
$\Phi_\pm(\sigma) := \rho_1(0,0) = \chi_0^1$, and $\Phi_\pm(\tau) := \rho_{\pm}(0,0) =\frac{1}{2} (\chi_0^0 +\chi_0^1 \pm \chi_{1}^0)$.
Here, the $\psi$-ring structure of the domain of $\Phi_\pm$ is given by $\psi^\ell(\sig^{\pm 1}) = \sig^{\pm\ell}$ and $\psi^\ell(\tau^{\pm 1}) = \tau^{\pm \ell}$ for all $\ell\geq 1$.
Similarly, we have two isomorphisms of graded $\nq$-algebras
\begin{equation}
\Psi_\pm:  \frac{\nq[\mu,\nu]}{\langle \mu, \nu \rangle^2}\to A^*(I\pro(1,2))_\nq,
\end{equation}
where $\mu,\nu$ in $A^{\bra{1}}(I\pro(1,2))_\nq$ and $\Psi_\pm(\nu) :=
\ovc^1(\rho_{\pm}(0,0)) =\frac{1}{2} (c_0^1\pm c_{1}^0)$ 
and $\Psi_\pm(\mu) := \ovc^1(\rho_1(0,0)) = c_0^1$.  Under the
identifications $\Phi_\pm$ and $\Psi_\pm$, the inertial Chern character
$\ovCh:K(I\pro(1,2))\to A^*(I\pro(1,2))_\nq$ corresponds to the map
$\sigma\mapsto\exp(\mu) = 1 + \mu$ and
$\tau\mapsto\exp(\nu) = 1 + \nu$.

\end{prop}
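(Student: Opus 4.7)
The plan is to verify directly that $\Phi_\pm$ is well-defined, surjective, and a dimension count then forces injectivity; the Chow statement is handled by the same method but simplified by the fact that all positive-degree products vanish; and the Chern character statement follows from the $\lambda$-line element formula $\ovCh(\cl)=\ovexp(\ovc^1(\cl))$.

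For $\Phi_\pm$, first note that $\rho_1(0,0)$ and $\rho_\pm(0,0)$ are both $\lambda$-line elements, hence units of the virtual $K$-theory ring, so $\sigma$ and $\tau$ may be sent to them consistently with $\sigma^{\pm 1}$ and $\tau^{\pm 1}$. To check the three defining relations, I would use the multiplication table for $\cP_1$ already established in the previous proposition together with the formulas for $\rho_0(\alpha,\beta),\rho_1(\alpha,\beta),\rho_\pm(\alpha,\beta)$ in terms of the basis $\{y_0^0,y_0^1,y_0^2,y_1^0,y_1^1\}$. For instance, one computes $\sigma^2 \mapsto \rho_1(0,0)\star_{virt}\rho_1(0,0) = \rho_0(1,0) = y_0^2$ and $\sigma^3 \mapsto \rho_1(1,0)$, then verifies $\sigma^3-\sigma^2-\sigma+1\mapsto 0$; similarly for $\tau^3-\tau^2-\tau+1$ and for $(\sigma-\tau)(\tau-1)$. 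These are finite, mechanical checks whose only content is bookkeeping of $\Delta_0$ and $\Delta_1$.

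Next, I would observe that the relations in the source imply $\{1,\sigma,\sigma^2,\tau,\tau^2\}$ spans it as a $\nq$-vector space: $\sigma^3=\sigma^2+\sigma-1$ and $\tau^3=\tau^2+\tau-1$ reduce all higher powers, while $(\sigma-\tau)(\tau-1)=0$ lets one rewrite every mixed monomial in terms of these five. Hence the source has dimension at most $5$, matching $\dim_\nq K(I\pro(1,2))_\nq = n^2+1 = 5$. For surjectivity of $\Phi_\pm$, I would exhibit the five standard basis vectors in the image: $1\mapsto y_0^0$, $\sigma\mapsto y_0^1$, $\sigma^2\mapsto y_0^2$, and the images of $\tau$ and $\tau^2$ involve $y_1^0$ and $y_1^0+y_1^1$ respectively, from which $y_1^0$ and $y_1^1$ can be extracted by linear combinations with the untwisted basis. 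Since $\Phi_\pm$ is a surjection between $\nq$-vector spaces of equal dimension $5$, it is a bijection, hence a $\nq$-algebra isomorphism; compatibility with $\psit$ is automatic once one knows $\Phi_\pm$ is a ring map and that $\sigma,\tau\in\cP_1$ so that $\psit^n\sigma=\sigma^{\star n}$ and $\psit^n\tau=\tau^{\star n}$ by Proposition~\ref{prop:lambda}(\ref{lambda.three}).

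For the Chow statement, by the computation already in the text $A^*(I\pro(1,2))_\nq$ has basis $\{c_0^0,c_0^1,c_1^0\}$ and the inertial product of any two elements of positive $\cs$-degree vanishes, so $A^*(I\pro(1,2))_\nq$ is the abstract ring $\nq\oplus A^{\bra{1}}$ with trivial product on the second summand. Under $\Psi_\pm$, the source $\nq[\mu,\nu]/\langle\mu,\nu\rangle^2$ is precisely $\nq\oplus\nq\mu\oplus\nq\nu$ with the same trivial multiplication, and $\{\mu,\nu\}\mapsto\{c_0^1,\tfrac12(c_0^1\pm c_1^0)\}$ is a basis of $A^{\bra{1}}(I\pro(1,2))_\nq$, so $\Psi_\pm$ is a graded $\nq$-algebra isomorphism. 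Finally, since $\sigma$ and $\tau$ are $\lambda$-line elements, Equation~\eqref{eq:ovChlineovexp} gives $\ovCh(\sigma)=\ovexp(\ovc^1(\sigma))$ and $\ovCh(\tau)=\ovexp(\ovc^1(\tau))$; because $\ovc^1(\sigma)=c_0^1=\Psi_\pm(\mu)$ and $\ovc^1(\tau)=\tfrac12(c_0^1\pm c_1^0)=\Psi_\pm(\nu)$ each lie in $A^{\bra{1}}$ where $x\star_{virt}x=0$, the inertial exponential truncates: $\ovexp(\Psi_\pm(\mu))=1+\mu=\exp(\mu)$ and likewise for $\nu$. The hardest step is simply keeping careful track of the signs and the $J_\nq$-coordinates $(\alpha,\beta)$ when verifying the three defining relations of the K-theory presentation; everything else is linear algebra over $\nq$.
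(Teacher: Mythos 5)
Your proof is correct and follows essentially the same approach as the paper's: identify that $y_0^1$ and $\rho_\pm(0,0)$ generate the ring, verify the three relations hold, and conclude by a dimension count ($5=5$) that $\Phi_\pm$ is an isomorphism. You fill in more detail than the paper (which simply says ``a calculation shows'' and ``a similar analysis holds for the Chow theory''), including the explicit reduction of mixed monomials via $(\sigma-\tau)(\tau-1)=0$, the extraction of $y_1^0,y_1^1$ from the images of $\tau,\tau^2$, and the justification of $\psi$-ring compatibility and of the Chern character formula via Equation~\eqref{eq:ovChlineovexp} together with the nilpotency of $A^{\bra{1}}$.
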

\begin{proof}
Since 
$(\chi_0^1)^2 = \chi_0^2$ and $\chi_0^0 = 1$
and 
$
\rho_{\pm}(0,0)^2 = \frac{1}{2}((\chi_0^0+\chi_0^2) \pm (\chi_{1}^0 + \chi_{1}^1)),
$
the set $\{\,\chi_0^0, \chi_0^1, \chi_0^2,\rho_+(0,0),\rho_+(0,0)^2\}$ 
is a basis for the $\nq$-vector space $K(I\pro(1,2))_\nq$. Thus, $K(I\pro(1,2))_\nq$ is
generated as a 
$\nq$-algebra
by 
$\chi_0^1$
and $\rho_+(0,0)$. A calculation 
shows that the following three polynomials are zero:
\[
(\chi_0^1-1)((\chi_0^1)^2-1) = (\rho_+(0,0)-1)(\rho_+(0,0)^2-1) =
  (\chi_0^1-\rho_+(0,0))(\rho_+(0,0)-1) = 0.
\]
 A dimension count shows that these are the only relations. Therefore,
$\Phi_+$ is an isomorphism of $\nq$-algebras. 
The previous analysis holds verbatim if $\rho_+(0,0)$ is replaced by
$\rho_-(0,0)$ everywhere.

A similar analysis holds for the Chow theory.
\end{proof}

\begin{rem}
The 
presentation in the previous proposition yields an
exotic integral structure in virtual
K-theory and Chow theory as we now explain.

Consider the subring $\oKZ(I\pro(1,2))$ (not sub-$\nq$-algebra) of $K(I\pro(1,2))_\nq$ generated by
$\{\,\rho_1(0,0),\rho_+(0,0)\,\}$.  Under the isomorphism $\Phi_+$ in Proposition \ref{prop:POneTwoVirtualK}, the ring $\oKZ(I\pro(1,2))$ is isomorphic to
\[
\frac{\nz[\sigma,\tau]}{\langle 
  (\tau-1)(\tau^2-1),(\sigma-1)(\sigma^2-1),(\sigma-\tau)(\tau-1)
    \rangle}
\]
under the identification $\sigma = \rho_1(0,0)$ and $\tau=\rho_+(0,0)$.  

We will now show that the group of $\lam$-line elements of $\oKZ(I\pro(1,2))$, $\oPZ_1$, is equal to $\cP_1\cap \oKZ(I\pro(1,2))$. To see this, notice that since $\Delta_0 = \sigma^2 - 1$ and $\Delta_1 = 2 \tau^2 - \sigma^2 - 1$, 
\[
f(\alpha,\beta) = 2\beta\tau^2 + (\alpha-\beta)\sigma^2 - (\alpha+\beta). 
\]
Hence,  $\rho_s(\alpha,\beta)$ belongs to $\oKZ(I\pro(1,2))$ if and only if $(\alpha,\beta)$ belongs to  
\[
D := \{ (p+\frac{q}{2},\frac{q}{2}) \, | \, p,q\in\nz \}, 
\]
where $s=0,1,\pm$ , noting that $\rho_-(0,0) = \sigma\tau^{-1}$.  Thus, by Proposition \ref{prop:POneTwoVirtualK},
\[
\cP_1\cap\oKZ(I\pro(1,2)) = \rho_0(D)\cup\rho_1(D)\cup\rho_+(D)\cup\rho_-(D),
\]
but Equations (\ref{eq:RhoMultEight}) to (\ref{eq:RhoMultTen}) imply that $\cP_1\cap\oKZ(I\pro(1,2))$ is closed under inversion. It follows that $\oPZ_1 = \cP_1\cap\oKZ(I\pro(1,2))$.

We will now show that $\oPZ_1$ is the subgroup generated by $\sigma$ and $\tau$.  Notice that since $\sigma^2 = \rho_0(1,0)$ and $\tau^2 = \rho_0(\frac{1}{2},\frac{1}{2})$, $\sigma^{2k} \tau^{2\ell} = \rho_0(k+\frac{\ell}{2},\frac{\ell}{2})$ belongs to $\langle \sigma, \tau \rangle$ for all $k,\ell\in\nz$, i.e., $\rho_0(D)\subseteq \langle \sigma,\tau\rangle$. Similarly, $\rho_1(0,0)\rho_0(D) = \rho_1(D)$, $\rho_+(0,0)\rho_0(D)= \rho_+(D)$, and $\rho_-(0,0)\rho_0(D)= \rho_-(D)$ are all subsets of $\langle\sigma,\tau\rangle$. It follows that $\langle\sigma,\tau\rangle =\oPZ_1$.

Consider the subring $\oAZ^*(I\pro(1,2)) := \ovCh(\oKZ(I\pro(1,2)))$ of the virtual Chow ring of $A^*(I\pro(1,2))_\nq$. From this we obtain (see Proposition \ref{eq:FirstChernOfLines})
\[
\oAZ^{\{0\}}(I\pro(1,2)) = \nz c_0^0 \qquad \mathrm{and}\qquad \oAZ^{\{1\}}(I\pro(1,2)) = \{ v c_0^1 + w c_1^0 \,|\,(v,w)\in D\}.
\]

It follows that the first virtual Chern class $\ovc^1:\cPZ_1\to\oAZ^\bra{1}(I\pro(1,2))$  is a group isomorphism by Proposition \ref{eq:FirstChernOfLines},  since for all $p,q$ in $\nz$,
\[
\ovc^1(\sigma^p\tau^q) = p \ovc^1(\sigma) + q\ovc^1(\tau) = (p+\frac{q}{2})c_0^1 + \frac{q}{2} c_1^0.
\]

\end{rem}

\subsubsection{The virtual K-theory and virtual Chow ring of $\pro(1,3)$} 
We now study the virtual K-theory and virtual Chow ring of
$\pro(1,3)$. Unlike the case of $\pro(1,2)$, the 
formula of \cite[Theorem 4.2.2]{EJK:12a} implies that
the rational virtual K-theory and rational virtual
Chow rings of $\pro(1,3)$ differ from the orbifold K-theory and the orbifold
Chow rings of the cotangent bundle $T^*\pro(1,3)$, respectively. 
Indeed
the formula of \cite[Definition 4.0.11]{EJK:12a} shows that
the class $\cs^+T^*\Pro(1,3)$ is not integral, so the inertial pair 
from the orbifold theory of $T^*\pro(1,3)$ is Gorenstein but not strongly Gorenstein. 
We will now describe the $\lambda$-positive elements of virtual K-theory of $\pro(1,3)$. Unlike the case of $\pro(1,2)$, we need to work with
$\nc$-coefficients, so that the set of $\lambda$-line elements generate the entire virtual
K-theory group.  

\begin{rem}
For the remainder of this section, unless otherwise specified, all products
are the virtual 
products.
\end{rem}

\begin{prop} \label{prop:POneThreeVirtualK}
 Let $(K(I\pro(1,3))_\nc,\star,1 :=
  \chi_0^0,\psit)$ be the virtual K-theory ring with its virtual $\lam$-ring
  structure. The set of its $\lambda$-line elements $\cP_1$ spans the
  $\nc$-vector space $K(I\pro(1,3))_\nc$.
The restriction of the inertial dual $\cP_1\to\cP_1$ agrees with the
operation of taking the inverse. The space 
$\cP_1$ consists of
  $27$ orbits of the action of the translation group $J_\nc$, where each orbit has a unique
  representative\footnote{This representative need not be the same as the one defined
  in Corollary (\ref{crl:CanonicalReps}).}
 in the set \[\{\Sigma_i\}_{i=1}^3 \sqcup \coprod_{\substack{i=1,2,3\\ j = 1,2}}
  \cD_{i,j} \sqcup \coprod_{\substack{i=1,\dots ,6\\ k = 0,1,2}}
  \cT_{i,k}\] given by the
  following (where $\zeta_3 = \exp(2\pi i/3)$), $j\in \{1,2\}$ and $k\in\{0,1,2\}$:
\[ \Sigma_{1}=\chi_0^{0}, \quad \Sigma_{2}=\chi_0^{1}, \quad \Sigma_{3}=\chi_0^2, \]
\[ 
\cD_{1,j} =\frac{1}{3} \chi_0^0 + \frac{1}{3} \chi_0^1 +\frac{1}{3} \chi_0^2
-\frac{1}{3} \zeta_3^j \chi_1^0 + \frac{1}{3} \chi_1^1 -\frac{1}{3} \zeta_3^{2j}
\chi_2^0 + \frac{1}{3} \chi_2^1, 
\]
\[
\cD_{2,j} = \frac{1}{3} \chi_0^0 + \frac{1}{3} \chi_0^1 + \frac{1}{3} \chi_0^2
-\frac{1}{3} \chi_1^0 + \frac{1}{3} \zeta_3^j \chi_1^1 -\frac{1}{3} \chi_2^0 +
\frac{1}{3} \zeta_3^{2j} \chi_2^1, 
\]
\[
\cD_{3,j} = \frac{1}{3} \chi_0^0 + \frac{1}{3} \chi_0^1 + \frac{1}{3} \chi_0^2 -\frac{1}{3} \zeta_3^{2j} \chi_1^0 + \frac{1}{3} \zeta_3^j \chi_1^1 -\frac{1}{3} \zeta_3^j \chi_2^0 + \frac{1}{3} \zeta_3^{2j} \chi_2^1,
\]
\[
\cT_{1,k} = \frac{1}{3} \chi_0^0 + \frac{2}{3} \chi_0^2 + \frac{1}{3} \zeta_3^k \chi_1^0 + \frac{1}{3} \zeta_3^{2k} \chi_2^0, 
\]

\[
\cT_{2,k} =  \frac{2}{3} \chi_0^0 + \frac{1}{3} \chi_0^2 -\frac{1}{3} \zeta_3^k \chi_1^0 -\frac{1}{3} \zeta_3^{2k} \chi_2^0, 
\]

\[
\cT_{3,k} = \frac{2}{3} \chi_0^0 + \frac{1}{3} \chi_0^1 + \frac{1}{3} \zeta_3^k \chi_1^1 + \frac{1}{3} \zeta_3^{2k} \chi_2^1,
\]
\[
\cT_{4,k} = \frac{1}{3} \chi_0^0 + \frac{2}{3} \chi_0^1 -\frac{1}{3} \zeta_3^k \chi_1^1 -\frac{1}{3} \zeta_3^{2k} \chi_2^1,
\]
\[
\cT_{5,k} = \frac{1}{3} \chi_0^1 + \frac{2}{3} \chi_0^2 + \frac{1}{3} \zeta_3^k \chi_1^0 + \frac{1}{3} \zeta_3^k \chi_1^1 + \frac{1}{3} \zeta_3^{2k} \chi_2^0 + \frac{1}{3} \zeta_3^{2k} \chi_2^1,
\]
\[
\cT_{6,k} = \frac{2}{3} \chi_0^1 + \frac{1}{3} \chi_0^2 -\frac{1}{3} \zeta_3^k \chi_1^0 -\frac{1}{3} \zeta_3^k \chi_1^1 -\frac{1}{3} \zeta_3^{2k} \chi_2^0 -\frac{1}{3} \zeta_3^{2k} \chi_2^1.
\]
\end{prop}

\begin{proof}
The $\lam$-line elements in $\cP_1$ are calculated by applying the algorithm in Remark (\ref{rem:LineEltCalc}) and by showing that these $\lam$-line elements are invertible. The fact that the elements of $\cP_1$ span $K(\pro(1,3))_\nc$ is also a calculation. We omit the details to all of these calculations which are straightforward but lengthy.
\end{proof}

\begin{prop}
Let $K(I\pro(1,3))_\nc$ be the virtual K-theory with its virtual
$\lambda$-ring structure. We have an isomorphism of $\nc$-algebras
$\Psi:\nc[\sigma^{\pm 1},\tau^{\pm 1},\taub^{\pm 1}]/\I \to
K(I\pro(1,3))_\nc$, where 
$\Psi(\sigma) = \Sigma_2$, 
$\Psi(\tau) = \cT_{1,1}$,
and $\Psi(\taub)=\cT_{1,2}$, where the ideal $\I$ is generated by the
following ten relations: 
\[\rl_1 := \sigma ^3-2 \sigma ^2+\sigma -\tau ^2+\tau  \taub +\tau -\taub ^2+\taub -1,\]
\[\rl_2 :=(\tau -1) \left(\tau ^2-\sigma\right),\qquad \rlb_2:=(\taub -1) \left(\taub ^2-\sigma\right),\] 
\[\rl_3:=(\tau -1) \left(\sigma ^2-\tau \right),\qquad\,\rlb_3:=(\taub -1) \left(\sigma ^2-\taub \right),\]
\[\rl_4 := \sigma ^2-\sigma  \tau -\sigma  \taub +\tau ^2 \taub -\tau  \taub +\taub ^2-\taub +1,\]
\[\rlb_4 := \sigma ^2-\sigma  \tau -\sigma  \taub +\tau ^2+\tau  \taub ^2-\tau  \taub -\tau +1,\]
\[\rl_5 := (\tau -1) (\sigma  \tau -1),\qquad\rlb_5 := (\taub -1) (\sigma  \taub -1),\]
\[\rl_6 :=-\sigma ^2+\sigma  \tau  \taub +\sigma -\tau ^2+\tau  \taub -\taub ^2.\]
It follows that $(\sigma-1)(\sigma^3-1)$ belongs to $\I$, which is the relation on the untwisted sector.
Furthermore, every element $K(I\pro(1,3))_\nc$ can be uniquely presented as a polynomial $\{\,\sigma,\tau,\taub\,\}$ of degree less than or equal to $2$. In particular, we have 
\[
\sigma^{-1} =-\sigma ^2+\sigma -\tau ^2+\tau  \taub +\tau -\taub ^2+\taub,
\]
\[
\tau^{-1} =- \sigma \tau +\sigma +1, \dsand \taub^{-1} = -\sigma\taub+\sigma+1.
\]
\end{prop}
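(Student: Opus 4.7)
The plan is to establish the presentation in three stages: first verify that $\Psi$ is a well-defined ring homomorphism by checking that all ten generating relations hold on the chosen generators; then show $\Psi$ is surjective by writing each basis vector $y_m^a$ of $K(I\pro(1,3))_\nc$ as a polynomial in $\sigma, \tau, \taub$; and finally show injectivity by a dimension count, which combined with surjectivity yields the isomorphism. To verify well-definedness, I would compute each of $\rl_1, \rl_2, \rlb_2, \rl_3, \rlb_3, \rl_4, \rlb_4, \rl_5, \rlb_5, \rl_6$ on the images $\Psi(\sigma) = \Sigma_1$, $\Psi(\tau) = \cT_{1,1}$, $\Psi(\taub) = \cT_{1,2}$ by direct calculation using the virtual multiplication table for $K(I\pro(1,3))_\nc$ in the basis $\{y_m^a\}$ described earlier in the section. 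Each verification is a finite expansion and comparison of coefficients; the complex conjugation $\zeta_3 \leftrightarrow \zeta_3^2$ sends $\tau \leftrightarrow \taub$ and thus $\rl_i \leftrightarrow \rlb_i$, so roughly half the relations follow from their conjugate counterparts by symmetry.

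For surjectivity, the untwisted basis vectors are trivially in the image since $y_0^j$ is a power of the untwisted generator. For the twisted sectors, I would compute products such as $\tau^2$, $\taub^2$, $\tau\taub$, $\sigma\tau$, $\sigma\taub$ in the basis $\{y_m^a\}$ and invert the resulting linear system to express each $y_1^a, y_2^a$ as an explicit polynomial in $\sigma, \tau, \taub$. For injectivity, I would show that the relations reduce every monomial in $\sigma, \tau, \taub$ modulo $\I$ to a polynomial of degree at most $2$, yielding a quotient of $\nc$-dimension at most $n^2+1 = 10$, which matches the $\nc$-dimension of $K(I\pro(1,3))_\nc$. Specifically, $\rl_2, \rlb_2$ eliminate $\tau^3, \taub^3$; $\rl_3, \rlb_3$ eliminate $\sigma^2\tau, \sigma^2\taub$; $\rl_1$ controls $\sigma^3$; and $\rl_6$ reduces $\sigma\tau\taub$. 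Surjectivity together with this upper bound forces equality of dimensions and hence the isomorphism.

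The inverse formulas follow directly from specific relations: $\rl_5 = (\tau-1)(\sigma\tau-1)$ expands to $\sigma\tau^2 - \sigma\tau - \tau + 1 = 0$, and dividing by $\tau$ gives $\tau^{-1} = -\sigma\tau + \sigma + 1$; analogously $\rlb_5$ yields $\taub^{-1}$; and dividing $\rl_1$ through by $\sigma$ produces the claimed formula for $\sigma^{-1}$. The untwisted sector relation $(\sigma-1)(\sigma^3-1) \in \I$ then follows from $\rl_1$ after using $\rl_6$ (or equivalently $\rl_4, \rlb_4$) to eliminate the $\tau, \taub$-dependence. The principal obstacle is the sheer bulk of explicit computation required for both the relation verifications and the basis expansions; without systematic bookkeeping, ideally assisted by a computer algebra system, arithmetic errors are easy to make. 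A secondary subtlety is ensuring that no relation beyond the stated ten is needed — this is confirmed precisely by the dimension count matching $10 = \dim K(I\pro(1,3))_\nc$, so that the quotient cannot be strictly larger than the image.
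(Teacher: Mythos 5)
Your proposal follows essentially the same route as the paper's proof: verify the ten relations hold for the chosen images of $\sigma,\tau,\taub$ under the virtual product, show the degree-$\le 2$ monomials span (which gives surjectivity), and conclude by the dimension count $10 = \dim_\nc K(I\pro(1,3))_\nc$ that no further relations are needed. The paper phrases this more compactly by asserting directly that the degree-$\le 2$ monomials are a basis and that each of the ten relations expresses one of the ten cubic monomials in terms of lower-degree ones; your description of the mechanics (extracting $\sigma^{-1},\tau^{-1},\taub^{-1}$ from $\rl_1,\rl_5,\rlb_5$, using conjugation symmetry $\zeta_3 \leftrightarrow \zeta_3^2$ to halve the verification work) is consistent with that and adds useful organizational detail.
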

\begin{proof}
$K(I\pro(1,3))_\nc$ is a ten-dimensional $\nc$-vector space. A 
calculation shows that the set of all monomials in $\{\sigma,\tau,\taub\,\}$
of degree less than or equal to $2$ is a basis of this vector space. The ten
relations correspond to the ten cubic monomials in
$\{\,\sigma,\tau,\taub\,\}$. The expression for the inverses can be verified
by computation.
We omit the details of these straightforward but lengthy calculations.
\end{proof}

\begin{rem} 
Restricting $\Psi$ to $\nz[\sigma^{\pm 1},\tau^{\pm 1},\taub^{\pm 1}]/\I$ yields
an exotic integral structure on the virtual K-theory $K(I\pro(1,3))_\nc$. The inertial
Chern character 
homomorphism 
$\ovCh:K(I\pro(1,3))_\nc\to
A^*(I\pro(1,3))_\nc$ induces an exotic integral structure on virtual Chow theory.
\end{rem}
\subsection{The resolution of singularities of ${\mathbb T}^*\pro(1,n)$
and the HKRC}
We now connect the virtual $\lambda$-ring to the usual $\lambda$-ring structure on a crepant resolution of singularities of the coarse moduli space of the cotangent bundle stack ${\mathbb T}^*\pro(1,n)$.
\begin{prop} \label{prop.cotanp1n}
The cotangent bundle ${\mathbb T}^*\pro(1,n)$ of $\pro(1,n)$ is the quotient stack $[(X
\times \A^1)/\nc^\times]$, where $\nc^\times$ acts with weights $(1,n,-(n+1))$. 
\end{prop}
\begin{proof}
Since $\dim \pro(1,n) = 1$ the cotangent bundle stack is a line bundle. 
Consider the quotient map $\pi \colon X^0 \to \pro(1,n)=[X^0/\nc^\times]$. We begin by determining $\pi^*{\mathbb T}^*\pro(1,n)$ as an $\nc^\times$-equivariant bundle $L$ on $X^0$. Once we do this, we can identify ${\mathbb T}^*\pro(1,n)$
with the quotient stack $[L/\nc^\times]$.

The restriction map $\Pic_{\nc^\times} (\nc^2) \to \Pic_{\nc^\times}(X^0) = \Pic(\pro(1,n))$ is surjective, so any $\nc^\times$-equivariant line bundle on $X^0$
is determined by a character $\xi$ of $\nc^\times$, so $L = X^0 \times \A^1$
and $\nc^\times$ acts on $L$ by $\lambda(a,b,v) = (\lambda a, \lambda^n b, \xi(\lambda)v)$.

To find the character $\xi$, note that for any algebraic group
$G$ and any $G$-torsor $\pi \colon P \to X$, there is an exact sequence of $G$-equivariant vector bundles on $P$
$$0 \rTo P \times \Lie(G) \rTo TP \rTo \pi^*TX \to 0,$$ 
where $TP$ is the tangent bundle to $P$ \cite[Lemma A.1]{EdGr:05}. Applying this fact to the $\nc^\times$ torsor $\pi \colon X^0 \to \pro(1,n)$, we obtain an exact sequence of vector bundles
$$X^0 \times \nc \rTo TX^0 \rTo \pi^*{\mathbb T}\pro(1,n).$$
The action of $\nc^\times$ is as follows: Since $\nc^\times$ is Abelian,
the Lie algebra is the trivial representation, 
while $TX^0 = X^0 \times \nc^2$, where $\nc^\times$ acts on the $\nc^2$ factor
with weights $(1,n)$. 
Taking the determinant of this sequence 
shows 
$\pi^*{\mathbb T}\pro(1,n)$ is the $\nc^\times$-equivariant line bundle
$X^0 \times \nc$, where $\nc^\times$ acts on $\nc$ with weight $(n+1)$.
Hence, $\pi^*{\mathbb T^*}\pro(1,n)$ is the $\nc^\times$-equivariant 
bundle $X^0 \times \nc$, where $\nc^\times$ acts on $\nc$ with weight $-(n+1)$.
\end{proof}

By Proposition \ref{prop.cotanp1n},
the coarse moduli space of ${\mathbb T}^*\pro(1,n)$ is the geometric quotient $\left((\nc^2 \smallsetminus \{0\}) \times \nc\right)/\nc^\times$, where $\nc^\times$ acts by $\lambda(a,b,v)= (\lambda a, \lambda^n b, \lambda^{-n-1}v)$. By the Cox construction \cite[Section 5.1]{CLS:11},
this
quotient is the toric surface associated to the 
simplicial fan $\Sigma_n$ with two maximal cones $\sigma_{n+1,n-1}$
and $\sigma_{n,n+1}$. The cone $\sigma_{n+1,n-1}$  has rays
$\rho_{n-1}$ generated by $(-n,n+1)$ and $\rho_{n+1}$ generated by $(0,1)$.
The cone $\sigma_{n,n+1}$ has rays $\rho_{n+1}$ and $\rho_{n}$
spanned by $(1,0)$.
The fan is as follows.
$$
\begin{tikzpicture}[scale=0.8]
\draw [->] (0,0)--(3,0) node[pos=.5,sloped,above,scale=0.6]  {$\rho_n$};
\draw[->] (0,0) -- (0,3);
\node[scale=0.6] at (.28,1) {$\rho_{n+1}$};
\draw[->] (0,0)--(-3, 4)  node[pos=.5,sloped,below,scale=0.6]  {$\rho_{n-1}$}; 
\node[scale = 0.6] at (-2.2,4) {$(-n, n+1)$};
\node[scale = 0.8] at (-.75,2.2) {$\sigma_{ n+1,n-1}$};
\node[scale = 0.8] at (1.1,1.2) {$\sigma_{ n,n+1}$};
\end{tikzpicture}
$$
The cone $\sigma_{n+1,n-1}$ has multiplicity $n+1$ and by the method of Hirzebruch-Jung continued fractions \cite[Section 10.2]{CLS:11}, 
the nonsingular
toric surface determined by the fan $\Sigma'_n$, where $\sigma_{n-1,n+1}$ is subdivided along the rays $\rho_0, \rho_1, \ldots, \rho_{n-2}$ 
where $\rho_i$ is generated by $(-(i+1), i+2)$, is a toric resolution of singularities of $X(\Sigma_n)$. 
$$\begin{tikzpicture}[scale=0.9]
\draw [->] (0,0)--(3,0) node[pos=.5,sloped,above,scale=0.6]  {$\rho_n$};
\draw[->] (0,0) -- (0,3);
\node[scale=0.6] at (.28,1) {$\rho_{n+1}$};
\draw[->] (0,0)--(-3, 4)  node[pos=.5,sloped,below,scale=0.6]  {$\rho_{n-1}$}; 
\draw[->] (0,0)--(-1.8, 3.6)  node[pos=.5,sloped,below,scale=0.6]  {$\rho_{n-2}$}; 
\draw[->] (0,0)--(-0.5, 3.2)  node[pos=.5,sloped,below,scale=0.6]  {$\rho_{0}$}; 
\node[scale = 0.6] at (-2.4,4) {$(-n, n+1)$};
\node[scale = 0.6] at (-1.2,3.6) {$(-n+1,n)$};
\node[scale = 0.6] at (-0.1,3.2) {$(-1,2)$};
\node at (-.7,2) {$\ldots$};
\node[scale = 0.8] at (1.1,1.2) {$\sigma_{ n,n+1}$};
\end{tikzpicture}
$$
By \cite[Exercise 8.2.13]{CLS:11}, $X(\Sigma_n)$ is Gorenstein, so by \cite[Proposition 11.28]{CLS:11} the resolution of singularities $X(\Sigma'_n) \to X(\Sigma_n)$
is crepant. 

By the Cox construction, we can realize
the smooth toric variety $X(\Sigma'_n)$ as the the quotient of
$\A^{n+2}\smallsetminus 
Z(\Sigma'_n)$ 
with coordinates $(x_0, \ldots x_{n+1})$
by the free action of $(\nc^\times)^n$ with weights
$$\left(\chi_0, \ldots , \chi_{n-1},\chi_0\chi_1^2
\ldots \chi_{n-1}^n,\chi_0^{-2} \chi_1^{-3} \ldots
\chi_{n-1}^{-(n+1)}\right),$$ 
where $\chi_i$ is the character of $(\nc^\times)^n$
corresponding to the $i$th standard basis vector of ${\mathbb Z}^n$
and 
$Z(\Sigma'_n)
= V(x_2x_3\ldots x_{n+1}, x_0x_3\ldots x_{n+1}, x_0x_1x_4\ldots x_{n+1}, \ldots , 
x_0 \ldots x_{n-3} x_n x_{n+1}, \linebreak[4] x_0 \ldots  x_{n-1},x_1x_2 \ldots x_n)$.
\begin{prop} 
The following isomorphisms hold
where $t_i = c_1(\chi_i)$:
\[
K(X(\Sigma_n')) = 
\frac{\Z[\chi_0,\chi_0^{-1},  \ldots , \chi_{n-1}, \chi_{n-1}^{-1}]}{\left
<\euler(\chi_0), \ldots \euler(\chi_{n-1})\right>^2}
\]
and
\[
A^*(X(\Sigma_n'))= 
\frac{\Z[t_0,t_1,\ldots t_{n-1}]}{\left<t_0,t_1,\ldots t_{n-1}\right>^2}.
\]
\end{prop}
\begin{proof}
  The action of the torus is free, so $K(X(\Sigma_n')) =
  K_{(\nc^\times)^n}(\nc^{n+2} \smallsetminus Z(\Sigma_n'))$ and
  $A^*(X(\Sigma_n')) = A^*_{(\nc^\times)^n}(\nc^{n+2} \smallsetminus
  Z(\Sigma_n'))$. As in the proof of Proposition 
  \ref{prop.KP1ncalculation}, the localization exact sequence in
  equivariant K-theory implies that $K_{(\nc^\times)^n}(\nc^{n+2}
  \smallsetminus Z(\Sigma_n'))$ is a quotient of 
$R((\nc^\times)^n)
= \Z[\chi_0,\chi_0^{-1}, \ldots , \chi_{n-1},
\chi_{n-1}^{-1}]$.  Because $Z(\Sigma_n')$ is the union of
intersecting linear subspaces, we use an inductive argument to establish the
relations.
The ideal 
$I= \langle x_2x_3\ldots x_{n+1}, x_0x_3\ldots x_{n+1},
x_0x_1x_4\ldots x_{n+1}, \ldots , x_0 \ldots x_{n-3} x_n x_{n+1},
\linebreak[4] 
x_0 \ldots  x_{n-1},x_1x_2 \ldots x_n\rangle$
has a primary decomposition as the intersection of the ideals of linear spaces
$\langle x_i, x_j\rangle $, where 
$i\in\{0, \ldots n-1\}$ and, for each $i$,
$i+2 \leq j \leq n+1$.
Thus $Z(\Sigma_n')$ is the union of the linear subspaces $L_{i,j}$,
where $L_{i,j}= Z(x_i,x_j)$.
Order
the pairs $(i,j)$ lexicographically and set $U_{i,j}
= \nc^2 \smallsetminus (\cup_{(k,l) \leq (i,j)} L_{k,l})$, so
that $\nc^{n+2} \smallsetminus Z(\Sigma_n') = U_{n-1,n+1}$. If
$j < n+1$, we have a localization sequence
$$K_{(\nc^\times)^n}(L_{i,j+1} \smallsetminus (\cup_{(j,k) < (i,j+1)}L_{j,k})) 
\rTo K_{(\nc^\times)^n}(U_{i,j+1}) \rTo K_{(\nc^\times)^n}(U_{i,j}) \rTo 0.$$
The same self-intersection argument used in the proof of Proposition
\ref{prop.KP1ncalculation} shows that 
$K_{(\nc^\times)^{n}}(U_{i,j}) =
K_{(\nc^\times)^n}(U_{i,j+1})/\langle \{\euler(N_{i,j+1})\}\rangle$,  
where
$N_{i,j+1}$ is the normal bundle to $L_{i,j+1}$ in $\nc^{n+2}$. Similarly, 
$K_{(\nc^\times)^n}(U_{i+1,i+2}) = K_{(\nc^\times)^n}(U_{i,i+2})/\langle \euler(L_{i+1,i+2})\rangle$. Hence, by induction we have that 
$$K_{(\nc^\times)^n}(U_{n-1,n+1}) = \Z[\chi_0, \chi_0^{-1}, \ldots , \chi_{n-1}, \chi_{n-1}^{-1}]/\langle \{ \euler(N_{i,j})\}\rangle.$$
The $K$-theoretic Euler class of the bundle $N_{i,j}$ can be read off from the weights of the $(\nc^\times)^n$ action. When $j < n$, $\euler(N_{i,j}) = (1 -\chi_i^{-1})(1 - \chi_j^{-1})$, while $\euler(N_{i,n}) = (1-\chi_{i}^{-1})(1- (\chi_0\chi_1^2 \ldots \chi_{n-1}^n)^{-1})$, and $\euler(N_{i,n+1}) = (1-\chi_i^{-1})(1 -\chi_0^2\chi_1^3 \ldots \chi_{n-1}^{n+1})$.

We wish to show that the ideal ${\mathfrak b}$ generated by these Euler classes
is the same as the ideal ${\mathfrak a} = \langle \euler(\chi_0), \ldots
, \euler(\chi_{n-1})\rangle^2$. If we set $e_i =\euler(\chi_i) = (1 - \chi^{-1}_i)$, then ${\mathfrak a} = \langle \{ e_ie_j\}_{0 \leq i\leq j \leq n-1} \rangle$. 
Note that the ideal $\langle e_1, \ldots , e_n \rangle$ is the ideal
of Laurent polynomials in $\chi_0, \ldots , \chi_n$ that vanish at
$(1,1,\ldots , 1)$. 
If $j < n$, then $\euler(N_{i,j}) = e_ie_j \in {\mathfrak a}$. Also note
that since the expression $(1-(\chi_0\chi_1^2 \ldots \chi_{n-1}^n)^{-1})$
vanishes when each $\chi_i$ is set to 1, it must be in the ideal generated
by $e_1, \ldots e_n$, so
$\euler(N_{i,n}) = (1- \chi_i^{-1})(1-(\chi_0\chi_1^2 \ldots \chi_{n-1}^n)^{-1})
\in \langle e_0, \ldots, e_n \rangle^2 = {\mathfrak a}$. Similarly,
$\euler(N_{i,n+1}) \in {\mathfrak a}$. 

If $i < n-1$ and $j \geq i+1$, then the generators $e_i e_j$ are the Euler classes
of the bundles $N_{i,j}$. The remaining generators of ${\mathfrak a}$
are of the form $e_i^2$ and $e_ie_{i+1}$. 
Since the $\chi_i$ are units, the fact that $e_ie_j$ is in ${\mathfrak b}$
implies that for all $k > 0$ and $|i -j| \geq 2$, 
all expressions of the form $e_i(1 -\chi_j^{-k})$
and $(1-\chi_{i}^{-1})(\chi_j^k -1)$ are in ${\mathfrak b}$. We can then perform repeated eliminations with the expression for $\euler(N_{i,n})$
to show that for any $i$, $e_i(1-(\chi_i^{-(i+1)}\chi_{i}^{-(i+2)}) \in {\mathfrak b}$.
A similar set of eliminations using the expression for $\euler(N_{i,n+1})$
shows that
$e_i(1 - \chi_i^{-(i+2)}\chi_{i+1}^{-(i+3)}) \in {\mathfrak b}$. Since this $\chi_i$
are units, 
$e_i(1 - \chi_i^{-(i+1)}\chi_{i+1}^{-(i+2)}) \in {\mathfrak b}$.
Hence $e_i(-\chi_i^{-(i+2)(i+1)}\chi_{i+1}^{-(i+2)^2} + \chi_i^{-(i+1)(i+2)}\chi_{i+1}^{(i+1)(i+3)}) = \chi_i^{-(i+2)(i+1)}\chi_{i+1}^{-(i+1)(i+3)} e_ie_{i+1}$. A similar
calculation shows that $e_i^2 \in {\mathfrak b}$.

The calculation for Chow groups is analogous where the Chow-theoretic 
Euler class of the bundles $N_{i,j}$ are expressed as $t_it_j$ when $j < n$
and $\euler(N_{i,n}) = t_i(t_0 + 2t_1 + \ldots n t_{n-1})$,
while $\euler(N_{i,n+1}) = t_i(-2t_0 -3t_1 - \ldots -(n+1)t_{n-1})$.
\end{proof}

\begin{thm} \label{prop:HKRC}
  Let $X(\Sigma'_n)$ be the crepant resolution of singularities of the moduli
  space of $T^*\Pro(1,n)$ indicated by the toric diagram above.  Then
for $n=2,3$  there are isomorphisms of augmented 
$\lam$-algebras over $\nc$.
  \[\Kh(I\Pro(1,n))_{\nc} \to 
K(X(\Sigma'_n))_\nc,\] 
where the 
the augmentation completion
$\Kh(I\Pro(1,n))_{\nc}$ has the inertial $\lambda$-ring structure described above.
\end{thm}
\begin{proof}
  We have calculated 
$K(I\Pro(1,2))_{\nc }$
 and 
$K(I\Pro(1,3))_{\nc}$,
 and in 
 both cases we obtain an Artin ring that is a quotient of a 
  coordinate ring of a torus of rank $2$ and $3$, respectively.  
The inertial augmentation ideal corresponds to the
  identity in the corresponding torus.  
Thus for $n=2,3$ the ring 
$\Kh(I\Pro(1,n))_\nc$
  is simply the localization of 
$K(I\Pro(1,n))_{\nc}$
 at the
 corresponding maximal ideal.  
 A calculation, which we omit as it is straightforward but lengthy, 
 shows that
 $
\Kh(I\Pro(1,2))_{\nc} 
= \nc[\sigma, \sigma^{-1}, \tau,\tau^{-1}]/\langle \sigma-1,\tau-1\rangle^2$ 
and 
$
\Kh(I\Pro(1,3))_{\nc} 
= \nc[\sigma, \sigma^{-1},\tau, \tau^{-1}, 
  \taub,\taub^{-1}]/\langle \sigma-1 ,\tau-1, \taub-1\rangle^2$, 
which are readily seen to be isomorphic as $\lambda$-rings to 
$K(X(\Sigma'_2))_\nc$ and $K(X(\Sigma'_3))_\nc$, 
respectively.
\end{proof}
\nocite{EdGr:00,ARZ:06}

\newcommand{\etalchar}[1]{$^{#1}$}
\def\cprime{$'$} \def\cprime{$'$} \def\cprime{$'$}
\providecommand{\bysame}{\leavevmode\hbox to3em{\hrulefill}\thinspace}
\providecommand{\MR}{\relax\ifhmode\unskip\space\fi MR }
\providecommand{\MRhref}[2]{%
  \href{http://www.ams.org/mathscinet-getitem?mr=#1}{#2}
}
\providecommand{\href}[2]{#2}

{}
\bibliographystyle{amsalpha}

\end{document}